\title{Minimal Model of Ginzburg Algebras}
\author{Stephen Hermes}
\address{Mathematics Department, Wellesley College, Wellesley, MA 02418}
\email{shermes@wellesley.edu}
\thanks{Partially supported by NSA Grant \#H98230-13-1-0247}
\date{\today}
\begin{document}

\maketitle

\begin{abstract}
We compute the minimal model for Ginzburg algebras associated to acyclic quivers $Q$. In particular, we prove that there is a natural grading on the Ginzburg algebra making it formal and quasi-isomorphic to the preprojective algebra in non-Dynkin type, and in Dynkin type is quasi-isomorphic to a twisted polynomial algebra over the preprojective with a unique higher $A_\infty$-composition. To prove these results, we construct and study the minimal model of an $A_\infty$-envelope of the derived category $\Db(Q)$ whose higher compositions encode the triangulated structure of $\Db(Q)$.
\end{abstract}


\section*{Introduction}

Calabi-Yau categories have been enjoying a growing interest in geometry, representation theory, and mathematical physics. These are triangulated $\k$-categories (where $\k$ is an algebraically closed ground field) equipped with bifunctorial isomorphisms $$\Hom(X,Y)\cong D\Hom(Y,S^nX)$$ where $D$ denotes $\k$-linear dual and $S$ is the suspension functor of the triangulated category. Here, $n$ is a fixed integer called the \emph{Calabi-Yau dimension}. Such categories have appeared in diverse areas of mathematics in many different guises. They have appeared as everything from derived categories of sheaves on noncommutative spaces, to categories of $D$-branes in string theory, and to cluster mutation in the representation theory of finite dimensional algebras.

The eponymous instance of such categories comes from algebraic geometry: Serre duality implies that the derived category of coherent sheaves for an $n$-dimensional Calabi-Yau variety $X$ is an $n$-Calabi-Yau category. In some sense this is the general situation. By adopting the categorical approach to noncommutative geometry in the sense of Bondal \cite{Bondal}, one should view an arbitrary Calabi-Yau category as the derived category of coherent sheaves of some noncommutative Calabi-Yau space. 

Dimension three is of particular interest. String theory requires an extra six spatial dimensions taking the form of a complex Calabi-Yau 3-fold. Three-Calabi-Yau categories are an essential ingredient in Kontsevich's homological picture of mirror symmetry \cite{KontsevichHMS}. The motivic DT-invariants of Kontsevich and Soibelman are framed as invariants of 3-Calabi-Yau categories \cite{KontsevichSoibelman}. In the direction of representation theory, 3-Calabi-Yau categories appear in Amiot's construction of cluster categories \cite{Amiot}, extending the construction of Buan, Marsh, Reiten, Reineke, and Todorov, which themselves are 2-Calabi-Yau \cite{BMRRT}.

In his seminal work, \cite{Ginzburg} Ginzburg introduced a class of differential graded algebras called Calabi-Yau algebras which have the remarkable property that their derived categories are Calabi-Yau triangulated categories. For dimension three, one can associate a Ginzburg algebra to an arbitrary quiver with potential in the sense of Derksen, Weyman, and Zelevinsky \cite{DWZ}. Conversely, Keller (and Van den Bergh in the appendix) proved in \cite{Keller5} that the Ginzburg algebra is 3-Calabi-Yau while Van den Bergh proved a converse result in \cite{VanDenBergh}.

Despite their growing mathematical importance, surprisingly little has been done in the study of Ginzburg algebras from the point of view of quiver representations. In this paper we study the $A_\infty$-structure of the Ginzburg algebras associated to acyclic quivers and compute their minimal models. In particular, we give a complete description of the $A_\infty$-structure of the minimal models for all acyclic quivers.

\subsection{Results} Fix a ground field $\k$, which we will assume to be algebraically closed and characteristic 0. Throughout, all categories are assumed to be $\k$-categories, all functors are $\k$-linear, and all unadorned tensor products are over $\k$. For a category $\cat C$ denote by $\cat C(X,Y)$ the space of morphisms from $X$ to $Y$ in $\cat C$.

Recall a \emph{quiver} is a tuple $Q=(Q_0,Q_1,s,t)$ where $Q_0$ is the \emph{vertex set}, $Q_1$ is the \emph{arrow} set, and $s,t:Q_1\to Q_0$ are respectively the \emph{source} and \emph{target} functions. A \emph{path} in $Q$ is a sequence $\alpha_1\alpha_2\cdots\alpha_n$ of arrows with $t(\alpha_i)=s(\alpha_{i+1})$ for each $1\leq i<n$. 

\begin{definition}
The Ginzburg algebra $\Gamma_Q$ associated to an acyclic quiver $Q$ is the dg path algebra $\k\hater{Q}$ where $\hater{Q}$ is the graded quiver obtained from $Q$ as follows:

\begin{enumerate}
\item the arrows of the original quiver have degree $0$,
\item for each $\alpha:i\to j$ in $Q$ adjoin a reversed arrow $\alpha^*:j\to i$ having degree $1$,
\item and for each vertex $i\in Q_0$ adjoin a loop $t_i$ at the vertex $i$ of degree $2$.
\end{enumerate}

The differential $d$ on $\Gamma_Q$ is given by:
\begin{enumerate}
\item $d\alpha=d\alpha^*=0$ for $\alpha\in Q_1$,
\item and $dt_i=\rho_i$ where $$\rho_i=\sum_{\substack{\alpha:i\to j \\ \text{in } Q}}\alpha\alpha^*-\sum_{\substack{\beta:j\to i \\ \text{in } Q}}\beta^*\beta.$$
\end{enumerate}
\end{definition}

\begin{remark}
The restriction to acyclic quivers is not necessary in the above definition provided one also chooses a \emph{potential} $W$ on $Q$, i.e., a linear combination of cycles in $Q$ considered up to cyclic equivalence; for such $(Q,W)$ the formula for $d\alpha^*$ involves terms in $W$. However acyclic quivers admit only the trivial potential and so we take the simplified definition above in our setup.
\end{remark}

We will want to view the Ginzburg algebra as bigraded where the arrows $\alpha$ have bidegree $(0,0)$, the $\alpha^*$ have bidegree $(1,0)$ and the loops $t_i$ have bidegree $(1,1)$. The usual grading of the Ginzburg algebra is given by total degree of this bigrading.

\begin{theoremA*}[Corollary \ref{cor:GinzburgNonDynkin}]
If $Q$ is non-Dynkin then $\Gamma_Q$ is formal, the homology $H_*\Gamma_Q$ is concentrated in degree 0, and is isomorphic to the preprojective algebra $\Lambda_Q$. 
\end{theoremA*}

\begin{remark}
We emphasize that the formality of $\Gamma_Q$ in the above theorem is with respect to the second component of the bigrading, and \emph{not} total degree.
\end{remark}

In the Dynkin setting, the minimal model is more subtle. The Nakayama functor $\nu$ of $\Mod\text{-}\k Q$ can be extended to an automorphism of the preprojective algebra $\Lambda_Q$, and the underlying associative algebra of the minimal model is a polynomial algebra over $\Lambda_Q$ twisted by $\nu$ (see Section \ref{sec:Preprojective}). 

\begin{theoremB*}[Theorem \ref{thm:TwistedDerivedTranslationAlgebra}]
If $Q$ is Dynkin, then there is an $A_\infty$-structure $(\mu_n)_{n\geq 2}$ on the twisted polynomial algebra $\Lambda_Q^\nu[u]$ making it a minimal model of $\Gamma_Q$. Moreover, this $A_\infty$-structure is $u$-invariant, generates $\Lambda_Q^\nu[u]$ over $\Lambda_Q$, and $\mu_n=0$ for $n\neq 2,3$.
\end{theoremB*}

The higher composition $\mu_3$ can be explicitly computed using the triangulated structure of the bounded derived category $\Db(Q)$. In general we find that the $A_\infty$-structure of the Ginzburg algebra $\Gamma_Q$ is intimately related to the triangulated structure of $\Db(Q)$:

\begin{theoremC*}[Theorem \ref{thm:GinzburgDerivedTranslationAlgebra}]
If $Q$ is acyclic, there is a quasi-isomorphism of algebras $$\Gamma_Q\to\bigoplus_{n\geq 0}\Pdg(Q)(\k Q,\tau^{-n}\k Q)$$ where $\Pdg(Q)$ is the (dg) category of bounded complexes of projective $\k Q$-modules and $\tau$ is the Auslander-Reiten translate.
\end{theoremC*}

This theorem is proven by constructing a differential graded Galois $\Z$-covering $\tilder{\Gamma}_Q$ of the algebra $\Gamma_Q$, together with a quasi-fully-faithful functor $R:\GG\to\Pdg(Q)$ from the path category $\GG$ of $\tilder{\Gamma}_Q$. In Section \ref{sec:Covering} we describe how to recover $\Gamma_Q$ in terms of the cover $\tilder{\Gamma}_Q$, providing the quasi-isomorphism of the Theorem. 

The derived category $\Db(Q)$ can be viewed as the zeroth homology of the dg category $\Pdg(Q)$, and the total homology category $H_*\Pdg(Q)$ is equivalent to the orbit category $\Db(Q)^\Z$ of $\Db(Q)$ under the $\Z$-action induced by the shift functor. This determines a minimal $A_\infty$-structure on the category $\Db(Q)^\Z$ by homotopy perturbation. The theorem above then reduces the computation of the minimal model of $\Gamma_Q$ to the computation of the $A_\infty$-structure of the category $\Db(Q)^\Z$. 

\begin{theoremD*}[Theorem \ref{thm:DerivedCategoryAinfty}]
If $Q$ is acyclic, the $A_\infty$-structure $(\mu_n)_{n\geq 2}$ has the following properties:
\begin{enumerate}
\item the compositions $\mu_n$ are equivariant with respect to the canonical degree 1 maps $s_X:X\to X[1]$,
\item if $X\xrightarrow{f}Y\xrightarrow{g}Z\xrightarrow{h}X[1]$ is a non-split triangle in $\Db(Q)$ then $\mu_3(h,g,f)=s_X$,
\item $\mu_n=0$ for $n\neq 2,3$.
\end{enumerate}
\end{theoremD*}

\subsection{Outline of the paper} We begin in Section \ref{sec:DGA} by recalling the notions of differential graded and $A_\infty$-structures on algebras and categories. In particular, we recall the Homotopy Transfer Theorem (Theorem \ref{thm:HomotopyTransfer}) and give an explicit formula of the transferred $A_\infty$-structure in terms of planar binary rooted trees.

Next in Section \ref{sec:Covering} we next adapt the covering theory of Bongartz-Gabriel \cite{BongartzGabriel} to accommodate dg and $A_\infty$-structures using techniques from and Bautista-Liu \cite{BautistaLiu}. We define section subcategories and use these to recover categories from their covers. This will be used extensively in the proofs of Theorems B and C.

In Section \ref{sec:DerivedCategories} we construct an explicit model of an $A_\infty$-structure on the orbit category $\Db(Q)^\Z$. In order to apply the Homotopy Transfer Theorem, we need to construct a suitable small replacement. This procedure is surprisingly subtle, and requires us to develop the notion of a pseudo-skeleton. With this technology in place we give the proof of Theorem D.

The proofs of Theorems A and C are given in Section \ref{sec:Ginzburg}. This section begins with a computation of the $A_\infty$-structure of the algebra $U(Q)=\bigoplus_{n\geq 0}\Pdg(Q)(\k Q,\tau^{-n}\k Q)$ using Theorem D. The quasi-isomorphism of Theorem C is then constructed using the dg covering theory developed in Section \ref{sec:Covering}.

Finally in Section \ref{sec:Dynkin} we specialize to the case where $Q$ is Dynkin and prove Theorem B. In order to do so, we briefly develop twisted polynomial algebras, and introduce the Nakayama involution $\nu$ on the preprojective algebra $\Lambda_Q$ of $Q$. The proof of Theorem B is given by constructing an isomorphism $\Lambda_Q\xrightarrow{\sim} U(Q)$  using covering theory.

	
\section{Differential graded and $A_\infty$-structures}\label{sec:DGA} 

To fix notation, we recall the theory of graded and differential graded algebras, modules, and categories as well as and $A_\infty$-algebras and categories. For more detailed accounts we direct the reader to \cite{KellerDG, Keller1}. 

\subsection{Differential graded categories} 


A \emph{graded $\k$-vector space} is a $\k$-vector space $V$ with a fixed direct sum decomposition $V=\bigoplus_{n\in\Z} V_n$. Elements of the subspace $V_n$ are said to be \emph{homogeneous} of degree $n$, and write $|v|=n$ for $v\in V_n$. 

A $\k$-linear map $f:V\to W$ between graded spaces is \emph{homogeneous} of degree $n$ if $f(V_p)\subseteq W_{p+n}$ for every $p\in\Z$. Graded spaces form a category $\Cgr(\k)$ with morphisms $$\Cgr(\k)(V,W)=\bigoplus_{n\in\Z}\Cgr_n(\k)(V,W)$$ 
where $\Cgr_n(\k)(V,W)$ denotes the space of homogeneous degree $n$ linear maps $f:V\to W$. A degree 0 homogeneous graded map will be called a \emph{chain map}.

The \emph{shift} of a graded space $V$ is the graded space $V[1]$ with $V[1]_n=V_{n-1}$. In particular, if $V$ is concentrated in degree $n$, the shift $V[1]$ is concentrated in degree $n+1$. Similarly, given a homogeneous linear map $f:V\to W$ of degree $n$, one defines $f[1]:V[1]\to W[1]$ by $f[1]=(-1)^nf$. In this way, shift $[1]$ determines an automorphism of $\Cgr(\k)$.

A \emph{graded category} is a category $\cat C$ enriched over the category of graded spaces and chain maps. That is, for any two objects $X$ and $Y$ of $\cat C$ the space of morphisms $$\cat C(X,Y)=\bigoplus_{n\in\Z}\cat C_n(X,Y)$$ is a graded $\k$-vector space, and the composition map $$\cat C(Y,Z)\otimes\cat C(X,Y)\to\cat C(X,Z)$$ is a chain map. A \emph{graded functor} is a functor $F:\cat C\to\cat D$ so that for any two objects $X$ and $Y$ of $\cat C$ the induced map $F_{XY}:\cat C(X,Y)\to\cat D(FX,FY)$ is a chain map. A \emph{contravariant} graded functor is a graded functor $F:\cat C^{\op}\to\cat D$. 
The opposite category $\cat C^{\op}$ is endowed with the composition $g\bullet f=(-1)^{|f||g|}f\circ g$ for homogeneous $f$ and $g$.

A \emph{differential} on a graded space $V$ is a degree $-1$ linear map $d:V\to V$ satisfying $d_V^2=0$. A pair $(V,d_V)$ where $d_V$ is a differential on $V$ is a \emph{differential graded} (dg for short) space. Denote by $\Cdg(\k)$ the full subcategory of $\Cgr(\k)$ whose objects are differential graded spaces. Note that we impose no compatibility between the morphisms of $\Cdg(\k)$ and differentials. Homogeneous degree 0 graded maps $f:V\to W$ such that $d_W\circ f= f\circ d_V$ are called \emph{chain maps}.

Let $A$ be a \emph{differential graded} algebra, i.e., an associative algebra object in $\Cdg(\k)$. The subcategory $\Cdg(A)$ of $\Cdg(\k)$ whose objects are dg $A$-modules and whose morphisms are $A$-linear graded maps is closed under tensor products and shifts. Again, there is no imposed compatibility between the morphisms of $\Cdg(A)$ and differentials. Homogeneous degree 0 morphisms commuting with differentials are again called \emph{chain maps}.

A \emph{differential graded category} is a category $\cat A$ enriched over the category of dg spaces and chain maps. More explicitly, for any two objects $X$ and $Y$ of $\cat A$, the set of morphisms $$\cat A(X,Y)=\bigoplus_{n\in\Z}\cat A_n(X,Y)$$ is a graded space and endowed with a differential $d_{XY}$ of degree $-1$. Moreover for any three objects $X$, $Y$, and $Z$ the composition map $$\cat A(Y,Z)\otimes\cat A(X,Y)\to\cat A(X,Z)$$ is a chain map, and so for any two morphisms $f:X\to Y$ and $g:Y\to Z$ with $g$ homogeneous, the Leibniz law $$d_{XZ}(g\circ f)=d_{YZ}(g)\circ f +(-1)^{|g|}g\circ d_{XY} (f)$$ is satisfied. A \emph{dg functor} is a functor $F:\cat A\to\cat B$ so that for any two objects $X$ and $Y$ of $\cat A$ the induced map $$F_{XY}:\cat A(X,Y)\to\cat B(FX,FY)$$ is a chain map. A \emph{contravariant} dg functor is a dg functor $F:\cat A^{\op}\to\cat B$.

\begin{examples}
\begin{enumerate}
\item The category $\Cdg(\k)$ is a dg category, when the morphism space $\Cdg(\k)(V,W)$ is equipped with the \emph{commutator differential} $$d_{VW}(f)=d_W\circ f-(-1)^{|f|}f\circ d_V$$ where $d_V$ and $d_W$ are the differentials of the dg spaces $V$ and $W$.
\item For a dg algebra $A$, $\Cdg(A)$ is a full dg subcategory of $\Cdg(\k)$.
\item Any full subcategory of a dg category is a dg category as well. In particular, the full category $\Pdg(A)$ of $\Cdg(A)$ whose objects are bounded chain complexes of projective modules is a dg category.
\item The (graded) opposite category of a dg category $\cat A$ is a dg category in the evident manner. 
\end{enumerate}
\end{examples}

\subsection{Underlying, homology and homotopy categories} 

There are several auxiliary categories which can be attached to a dg category.
\begin{definition}
\begin{enumerate}
\item The \emph{underlying category} of $\cat A$ is the category $Z_0\cat A$ whose objects are the same as the objects of $\cat A$, but with morphisms $Z_0\cat A(X,Y)$ given by the 0-cycles of $\cat A(X,Y)$.

\item The \emph{homology} of $\cat A$ is the graded category $H_*\cat A$ whose objects are the same as the objects of $\cat A$, but morphism spaces $$(H_*\cat A)(X,Y)=H_*\cat A(X,Y)$$ where $H_*\cat A(X,Y)=\ker d_{XY}/\im d_{XY}$ is the homology of the dg space $\cat A(X,Y)$.

\item The \emph{homotopy category} of $\cat A$ is the degree 0 subcategory $H_0\cat A$ of the homology $H_*\cat A$. Equivalently, it is the quotient of the underlying category $Z_0\cat A$ by the ideal $B_0\cat A$ of 0-boundaries.
\end{enumerate}
\end{definition}

All of these constructions are functorial in the category of (small) dg categories, i.e., a dg functor $\cat A\to\cat B$ induces (graded) functors between the underlying, homology, and homotopy categories.

If $A$ is a dg algebra, then The 0-cycles of $\Cdg(A)(X,Y)$ are the degree 0 maps $f:X\to Y$ so that $d_{XY}(f)=d_Y\circ f-f\circ d_X=0$, i.e, the chain maps. The morphism spaces $H_0\Cdg(A)(X,Y)$ in the homotopy category of $\Cdg(A)$ are just the spaces of homotopy classes of chain maps $f:X\to Y$. Indeed, the morphisms in $B_0\cat A(X,Y)$ are of the form $d_{XY}(s)=d_Y\circ s+s\circ d_X$ for some degree $-1$ morphism $s:X\to Y$. By a theorem of Happel \cite{Happel}, the derived category $\Db(A)$ is equivalent to $Z_0\Pdg(A)$ modulo homotopy, and hence is equivalent to $H_0\Pdg(A)$.

\subsection{$A_\infty$-Algebras and categories}

An \emph{$A_\infty$-algebra} is a graded space $A$ together with a collection of degree $n-2$ maps $\mu_n:A^{\otimes n}\to A$ for $n\geq 1$ satisfying the relation 
\begin{equation}\label{eqn:Ainfinity}
0=\sum_{\substack{p+q+r=n \\ p,r\geq 0, \ q\geq 1}}(-1)^{p+qr}\mu_{p+1+r}\circ(id^{\otimes p}\otimes\mu_q\otimes id^{\otimes r})
\end{equation}
for each $n$. The maps $\mu_n$ are called the \emph{higher products} or \emph{$A_\infty$-structure maps} of $A_\infty$-algebra $A$.

A \emph{morphism} $f:A\to B$ of $A_\infty$-algebras $(A,\mu_n)$ and $(B,\nu_n)$ is a collection of graded maps $f_n:A^{\otimes n}\to B$ for $n\geq 1$ of degree $n-1$ satisfying the identities 
\begin{equation*}
\sum_{\substack{p+q+r=n \\ p,r\geq 0, \ q\geq 1}}(-1)^{p+qr}f_{p+1+r}\circ (id^{\otimes p}\otimes\mu_q\otimes id^{\otimes r})=\sum_{\substack{1\leq d\leq n \\ i_1+\cdots+i_d=n}}(-1)^D\nu_d\circ (f_{i_1}\otimes\cdots\otimes f_{i_d})
\end{equation*}
for each $n$ where $D=(d-1)(i_1-1)+(d-2)(i_2-1)+\cdots+2(i_{d-2}-1)+(i_{d-1}-1)$.

An $A_\infty$-algebra $A$ is in particular a dg space, so we can form the homology $H_*A=\ker d_A/\im d_A$. The structure map $\mu_2$ of $A$ descends to an associative multiplication on $H_*A$, making it into a graded algebra. For an $A_\infty$-morphism $f:A\to B$, the map $f_1$ is a chain map and so induces a map $(f_1)_*:H_*A\to H_*B$ in homology. A morphism $f$ is an \emph{$A_\infty$-quasi-isomorphism} if $f_1$ is a quasi-isomorphism, i.e., the induced map $(f_1)_*$ is an isomorphism, and $f$ is \emph{strict} if $f_n=0$ for $n\neq 1$. The \emph{identity morphism} of an $A_\infty$-algebra $A$ is the strict morphism $id_A$ whose $n=1$ component is the identity on $A$.

Similarly one can extend the notion of a homotopy between dg morphisms to the $A_\infty$-setting. An advantage of studying homotopy theory in the category of $A_\infty$-algebras is that $A_\infty$-quasi-isomorphisms are invertible up to homotopy\textemdash that is, if $f:A\to B$ is an $A_\infty$-quasi-isomorphism, it necessarily admits a homotopy inverse.

\subsection{Minimal models}

An $A_\infty$-algebra $(A,\mu_n)$ is \emph{minimal} if $\mu_1=0$. An $A_\infty$-algebra $B$ is a \emph{minimal model} for $A$ if $B$ is minimal and there is an $A_\infty$-quasi-isomorphism $f:A\to B$. The $A_\infty$-algebra $A$ is \emph{formal} if it admits a minimal model whose higher multiplications vanish for $n\geq 3$.

If $A$ is any $A_\infty$-algebra, the homology $H_*A$ is minimal. The following theorem of Kad\-ei\v{s}\-vili makes $H_*A$ into a minimal model of $A$ in the case that $A$ is a dg algebra.

\begin{theorem}[Kadei\v{s}vili \cite{Kadeishvili}]\label{thm:Kadeishvili} 
Let $A$ be a differential graded algebra. Then there is a minimal $A_\infty$-algebra structure $(\mu_n)_{n\geq 2}$ on the homology $H_*A$ with $\mu_2$ equal the usual multiplication map together with an $A_\infty$-quasi-isomorphism $H_*A\to A$.\\
Moreover, this $A_\infty$-structure on $H_*A$ is unique up to a (non-unique) isomorphism of $A_\infty$-algebras.
\end{theorem}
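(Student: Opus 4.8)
The statement to prove is the existence of a minimal $A_\infty$-structure $(\mu_n)_{n\geq 2}$ on $H_*A$ with $\mu_2$ the induced multiplication, together with an $A_\infty$-quasi-isomorphism $H_*A \to A$, unique up to (non-unique) $A_\infty$-isomorphism. Since $A$ is a dg algebra it is in particular an $A_\infty$-algebra with $\mu_1 = d_A$, $\mu_2 = $ multiplication, and $\mu_n = 0$ for $n \geq 3$. The first step is to choose a \emph{Hodge decomposition} (homotopy retract) of the complex $(A, d_A)$: pick a cycle-section of the projection, i.e. subspaces so that $A \cong H_*A \oplus B \oplus B[1]$ as graded spaces (where $B = \im d_A$), giving chain maps $i : H_*A \to A$ and $p : A \to H_*A$ with $pi = \mathrm{id}$, $ip = \mathrm{id} - (d_A h + h d_A)$ for an explicit degree $+1$ homotopy $h$, and (after the standard adjustment) the side conditions $h^2 = 0$, $hi = 0$, $ph = 0$. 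This uses only that $\k$ is a field so the relevant short exact sequences split.

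**Next I would invoke the Homotopy Transfer Theorem (Theorem \ref{thm:HomotopyTransfer}) to transport the $A_\infty$-structure along this retract.** This immediately produces a minimal $A_\infty$-structure $(\mu_n)_{n\geq 2}$ on $H_*A$ (minimal because $i,p$ are chain maps, so the transferred $\mu_1$ vanishes) together with an $A_\infty$-quasi-isomorphism $H_*A \to A$ extending $i$. One then checks from the explicit tree formula that the transferred $\mu_2$ is $p \circ \mu_2^A \circ (i \otimes i)$, which is exactly the multiplication induced on homology — this is the only sum-over-trees term with two leaves, since $\mu_n^A = 0$ for $n\geq 3$ and the internal edges (labelled by $h$) contribute nothing at the two-leaf level. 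So the existence half follows formally once the retract is fixed; the content is really packaged inside Theorem \ref{thm:HomotopyTransfer}.

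**The uniqueness statement is the part requiring a genuine argument.** Suppose $(\mu_n)$ and $(\mu_n')$ are two minimal $A_\infty$-structures on $H_*A$, each equipped with an $A_\infty$-quasi-isomorphism $f : H_*A \to A$ and $f' : H_*A \to A$ whose linear components are quasi-isomorphisms inducing the identity on homology (one may assume $(f_1)_* = (f'_1)_* = \mathrm{id}$ after composing with an automorphism). The strategy is: since $A_\infty$-quasi-isomorphisms are invertible up to homotopy — a fact recalled in the excerpt — there is a homotopy inverse $g' : A \to H_*A$ of $f'$, and then $g' \circ f : (H_*A, \mu_n) \to (H_*A, \mu'_n)$ is an $A_\infty$-quasi-isomorphism between two minimal algebras. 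Its linear component is a quasi-isomorphism between complexes with zero differential, hence an isomorphism, so $g' \circ f$ is an $A_\infty$-isomorphism. One should also note that the transferred structure does not depend, up to isomorphism, on the choice of Hodge decomposition: two choices give homotopic retracts, and the transferred structures are related by an $A_\infty$-isomorphism lifting the identity, which can be seen either by the same homotopy-inverse argument or by an explicit obstruction-theory induction on $n$.

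**The main obstacle is the homotopy-invertibility of $A_\infty$-quasi-isomorphisms**, which the uniqueness argument leans on. In a fully self-contained treatment one proves this by an inductive construction: building the components $g_n$ of a homotopy inverse one degree at a time, solving at each stage a linear equation whose solvability is guaranteed because the obstruction is a boundary — this is the technical heart, and it is where the dg (rather than merely graded) hypothesis and the field hypothesis are used. Since the excerpt states this invertibility as a known fact and provides the Homotopy Transfer Theorem with its explicit tree formula, the write-up can proceed by citing Theorem \ref{thm:HomotopyTransfer} for existence and assembling the uniqueness from homotopy-invertibility plus the observation that a quasi-isomorphism of complexes with zero differential is an isomorphism; the remaining routine check is that $\mu_2$ agrees with the homology multiplication, which is a direct inspection of the two-leaf term in the transfer formula.
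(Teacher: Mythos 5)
The paper states Kadeishvili's theorem as a cited result and supplies no proof of its own; what it does provide, immediately afterward, is the Homotopy Transfer Theorem (Theorem \ref{thm:HomotopyTransfer}), framed explicitly as the tool for constructing the minimal model. Your proposal proves the theorem via exactly that tool --- Hodge decomposition plus transfer for existence, identification of $\mu_2$ with the homology product from the unique two-leaf tree, and, for uniqueness, homotopy-invertibility of $A_\infty$-quasi-isomorphisms together with the fact that an $A_\infty$-morphism whose linear component is invertible is an $A_\infty$-isomorphism --- and this is the standard modern argument, fully consistent with the paper's expository framing. The argument is correct, and it is reasonable to cite rather than re-derive the invertibility of $A_\infty$-quasi-isomorphisms since the paper itself records that as a known fact. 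One small cosmetic imprecision: you say the transferred structure is minimal ``because $i,p$ are chain maps, so the transferred $\mu_1$ vanishes,'' but the transferred $\mu_1$ is simply the differential of $V = H_*A$, which is zero by definition of the homology retract; the chain-map conditions on $i$ and $p$ are part of what makes the retract valid, not the direct cause of minimality.
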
 

In particular, Kad\-ei\v{s}\-vili's Theorem shows that the minimal model of a dg algebra is unique up to $A_\infty$-isomorphism. By abuse of terminology, we will refer to $H_*A$ with the above $A_\infty$-structure as \emph{the} minimal model of $A$, and $(\mu_n)_{n\geq 2}$ as \emph{the} minimal $A_\infty$-structure of $H_*A$.\\ 

Much of the theory of $A_\infty$-structures on algebras can be directly generalized to the categorical setting. These $A_\infty$-categories were first introduced by Fukaya \cite{Fukaya}, and further developed by Seidel \cite{Seidel}.

\begin{definition}
An \emph{$A_\infty$-category} is an $A_\infty$-algebra with multiple objects. More precisely, an $A_\infty$-category $\cat A$ consists of the data of a class of objects, a graded space $\cat A(X,Y)$ for every pair of objects $X$ and $Y$ of $\cat A$, together with $n$-airy compositions
$$\mu_n^{X_0,\dots, X_n}:\cat A(X_{n-1},X_n)\otimes\cdots\otimes\cat A(X_1,X_2)\otimes\cat A(X_0,X_1)\to \cat A(X_0,X_n)$$ satisfying relations analogous to \eqref{eqn:Ainfinity}.
\end{definition}

Many of the theorems and techniques available for $A_\infty$-algebras pass with minimal modification to the categorical setting. In particular, every (small) dg category $\cat A$ admits its homology category $H_*\cat A$ as a minimal model.

\subsection{Homotopy Transfer Theorem}\label{sec:HTT}

In order to explicitly construct the minimal $A_\infty$-structure from Kad\-ei\v{s}\-vili's Theorem we will need the Homotopy Transfer Theorem as presented in Loday-Vallette \cite{LodayVallette}. To state the Homotopy Transfer Theorem concretely, we will need some elementary facts about planar binary rooted trees.

\begin{definition}
A \emph{planar binary rooted $n$-tree} (PBR $n$-tree for short) is a trivalent planar graph $T$ with $n+1$ external edges, one of which is distinguished and called the \emph{root edge}. The other $n$ external edges are called the \emph{leaves} of $T$. We think of an external edge as being adjacent to only one vertex; all edges adjacent to two vertices are internal edges.
\end{definition}

Deleting a small neighborhood of a vertex $v$ of $T$ breaks $T$ into three connected components: the component containing the root edge, and the \emph{left} and \emph{right} \emph{subtrees} of $T^-(v)$ and $T^+(v)$ of $T$ at $v$. The left/right subtrees of $T$ are PBR trees with planar embedding induced from that of $T$ and root edge given by the edge formerly connected to $v$.

There is a standard bijection between PBR trees and $(231)$-avoiding permutations. Given a PBR tree $T$, one can construct a $(231)$-avoiding permutation $\sigma_T$ as follows. Choose an embedding of $T$ in the plane $\R^2$ so that for each vertex of $T$ the left subtree lies below the right subtree, and no two vertices have the same $(x,y)$-coordinates. This induces two total orderings on the vertices: a horizontal ordering and a vertical ordering, and hence horizontal and vertical order-preserving bijections $h,v:T_0\to\{1,2,\dots,n\}$ from the set of vertices $T_0$ of $T$. Then $\sigma_T(i)=v(h^{-1}(i))$. 

\begin{definition}
The \emph{sign} of a PBR tree $T$ is $\sgn(T)=\sgn(\sigma_T)$ where $\sigma_T$ is the corresponding permutation.
\end{definition}

Recall that a dg space $V$ is a \emph{homotopy retract} of a dg space $A$ if there is a diagram
\[
\xymatrix{A\ar@/^/[r]^{q}\ar@(ul,dl)[]_\phi & V\ar@/^/[l]^{j}}
\]
such that $qj=id_V$ and $\phi:id_A\simeq jq$ is a homotopy equivalence. Every dg space $A$ admits its homology $H_*A$ as a homotopy retract (\emph{cf. e.g.}, \cite{LodayVallette} Lemma 9.4.7). 

Let $V$ be a homotopy retract of a dg algebra $A$. For each PBR $n$-tree $T$ define a map $\mu_T:V^{\otimes n}\to V$ as follows: place the map $j$ on the leaves of $T$, $q$ on the root, $\phi$ on each internal edge, and the multiplication $\mu_A$ on each (internal) vertex. The maps are directed towards the root of $T$. The map $\mu_T$ has degree $n-2$ since there is a copy of the degree 1 map $\phi$ for each of the $n-2$ internal edges of $T$.

A more formal construction of the maps $\mu_T$ for can be provided by induction on PBR trees. To do so, we define degree $n-2$ maps $\nu_T:A^{\otimes n}\to A$ such that $\mu_T=q\circ\nu_T\circ j^{\otimes n}$ for each PBR tree $T$.

\begin{construction}\label{constr:NuMaps}
For the unique PBR 2-tree $Y$, $\nu_Y$ is simply the multiplication map $\mu$.
If $T$ is an arbitrary PBR tree, deleting a small neighborhood of the internal vertex adjacent to the root edge gives two smaller PBR trees $T^-$ and $T^+$ (the left and right subtrees). Define $$\nu_T=\mu\circ(\nu'_{T^-}\otimes\nu'_{T^+})$$ where $\nu'_T=\phi\circ\nu_T$ if $T\neq\varnothing$ and is $id_A$ otherwise. (Recall that in our convention, leaves are only adjacent to one edge.)
\end{construction}

\begin{theorem}[Homotopy Transfer]\label{thm:HomotopyTransfer}
The maps $$\mu_n=\sum_{T\in PBR_n}\sgn(T)\mu_T$$ endow $V$ with the structure of an $A_\infty$-algebra. Moreover, the maps $q$ and $j$ can be extended to $A_\infty$-quasi-isomorphisms, and $\phi$ can be extended to an $A_\infty$-homotopy.
\end{theorem}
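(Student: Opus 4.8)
The plan is to prove that the transferred maps $\mu_n = \sum_{T \in PBR_n} \sgn(T)\mu_T$ satisfy the $A_\infty$-relations \eqref{eqn:Ainfinity} by a direct tree-combinatorial argument, following Loday--Vallette. First I would fix the inductive description of $\nu_T$ from Construction \ref{constr:NuMaps} and record the basic degree bookkeeping: $\nu_T\colon A^{\otimes n}\to A$ has degree $n-2$ because $T$ has exactly $n-2$ internal edges, each carrying the degree $+1$ homotopy $\phi$, while $\mu_A$ is degree $0$; hence $\mu_T = q\circ\nu_T\circ j^{\otimes n}$ also has degree $n-2$, as claimed. The signs $\sgn(T)$ are designed precisely so that the Koszul signs appearing when one slides $\phi$'s and $j$'s past each other in the composition match up; the key lemma here is that $\sgn(T) = \sgn(\sigma_T)$ transforms correctly under the two elementary operations on trees used below (grafting a tree onto a leaf, and splitting off the subtree at the root vertex).

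The heart of the argument is to expand $\sum_{p+q+r=n}(-1)^{p+qr}\mu_{p+1+r}\circ(\id^{\otimes p}\otimes\mu_q\otimes\id^{\otimes r})$ as a signed sum over pairs of trees $(S,T)$ with $S$ a PBR $(p{+}1{+}r)$-tree and $T$ a PBR $q$-tree, where $T$ is grafted onto the $(p{+}1)$-st leaf of $S$. Because $q j = \id_V$, the composite $\mu_S\circ(\id\otimes\mu_T\otimes\id)$ equals (up to sign) the map $q\circ\nu_{S\circ_p T}\circ j^{\otimes n}$ associated to the grafted tree $S\circ_p T$ --- \emph{except} that at the grafting edge one inserts $jq = \id_A - d\phi - \phi d$ in place of $\phi$ (one fewer homotopy than the honest tree $S\circ_p T$ would carry). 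So the whole double sum reorganizes, for each PBR $n$-tree $U$ with a chosen internal edge $e$, into the value of $\nu_U$ but with $\phi$ on edge $e$ replaced by $-d\phi-\phi d$ (the $\id_A$ term is cancelled by the homotopy retract identity, or rather accounts for the $q=1$ and $r=0$ degeneracies separately). Finally one uses the Leibniz rule and $d_A\mu_A = \mu_A(d_A\otimes\id \pm \id\otimes d_A)$ to propagate the $d$'s: the $-d\phi-\phi d$ on edge $e$ telescopes against the differentials on the adjacent vertices and edges, and summing over all choices of $(U,e)$ these contributions cancel in pairs. What survives is exactly the $A_\infty$-relation, with all signs accounted for by the $\sgn(T)$ weights. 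The statements about $q$, $j$, $\phi$ extending to $A_\infty$-quasi-isomorphisms and an $A_\infty$-homotopy follow by the analogous (slightly more elaborate) tree sums, placing $q$ or $j$ at appropriate leaves/roots.

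The main obstacle I anticipate is purely the sign bookkeeping: verifying that the Koszul signs generated by permuting the tensor factors of $\phi^{\otimes(\text{internal edges})}$ and $j^{\otimes n}$ past one another, combined with the explicit sign $(-1)^{p+qr}$ in \eqref{eqn:Ainfinity} and the degree shifts, precisely reproduce $\sgn(\sigma_{S\circ_p T}) = \pm\,\sgn(\sigma_S)\sgn(\sigma_T)$ under grafting. The clean way to manage this --- which I would adopt rather than fighting signs term by term --- is to pass to the bar construction / coalgebra picture: an $A_\infty$-structure on $V$ is the same as a square-zero coderivation on the cofree coalgebra $T^c(V[1])$, the homotopy retract data $(j,q,\phi)$ induces (after shift) maps of graded spaces, and the transferred codifferential is $Q_V = (Tj)^{*}\,\Phi\,(Tq)^{*}$-type formula whose expansion is naturally indexed by trees with the correct signs built in automatically. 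I would state the theorem's proof at this level of generality, citing \cite{LodayVallette} Theorem 10.3.1 for the detailed sign verification, and include only the tree-indexing argument above to make the explicit formula $\mu_n=\sum_T \sgn(T)\mu_T$ transparent, since that explicit formula is what is actually used in the sequel.
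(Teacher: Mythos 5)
The paper provides no proof of this theorem at all: it is stated as a citation to Loday--Vallette \cite{LodayVallette} (for the tree formula and the extension of $j$) and to Markl \cite{Markl} (for the extensions of $q$ and $\phi$), and the text before the statement explicitly says the theorem is ``as presented in Loday--Vallette.'' So there is no in-paper argument to compare against.

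That said, your sketch is a faithful outline of how the proof actually goes in the cited source: the degree count ($n-2$ internal edges each carrying a degree-$1$ homotopy $\phi$), the reorganization of $\sum(-1)^{p+qr}\mu_{p+1+r}\circ(\id^{\otimes p}\otimes\mu_q\otimes\id^{\otimes r})$ into a sum over grafted trees with $jq = \id_A - d\phi - \phi d$ on the grafting edge, and the telescoping of the $d$-terms are all the right moving parts, and deferring to the coalgebra/bar-construction formulation for the sign bookkeeping is exactly what Loday--Vallette do. Since the paper itself simply cites the result, your proposal is, if anything, more detailed than what the paper records; it is consistent with the paper's approach of treating this as a black box from the literature, and I see no gap in the sketch beyond the acknowledged deferral on signs.
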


\begin{remark}
The $A_\infty$-extensions of $q$ and $\phi$ are due to Markl \cite{Markl}.
\end{remark}

The construction of the minimal model of Kad\-ei\v{s}\-vili's Theorem given by the Homotopy Transfer Theorem readily generalizes to the categorical setting. Given a small dg category $\cat A$, one needs to choose homotopy retractions of $\cat A(X,Y)$ for each pair of objects of $X$ and $Y$. Associated to each PBR $n$-tree $T$, one constructs homogeneous maps
$$\mu_T^{X_0,\dots, X_n}:\cat A(X_{n-1},X_n)\otimes\cdots\otimes\cat A(X_1,X_2)\otimes\cat A(X_0,X_1)\to \cat A(X_0,X_n)$$
analogously to \ref{constr:NuMaps}, and the higher compositions 
$$\mu_n^{X_0,\dots, X_n}=\sum_{T\in PBR_n}\sgn(T)\mu_T^{X_0,\dots, X_n}$$
define an $A_\infty$-structure on $H_*\cat A$.


\section{Differential graded covering theory}\label{sec:Covering}

The goals of this section are the twofold. First, we adapt the dictionary between algebras and (small) categories to the differential graded setting. Moreover, we modify the covering theory of Bongartz and Gabriel \cite{BongartzGabriel} to accommodate dg structures. Secondly we discuss how to recover a dg/graded algebra from a covering and show that this recovery process is compatible with passing to quotients and homology.

We begin by recalling the relationships between quivers, algebras, and locally finite dimensional categories.

\subsection{Path categories}

The \emph{associated algebra} of a small category $\cat C$ is the algebra $\k[\cat C]$ with underlying vector space $\bigoplus_{X,Y}\cat C(X,Y)$ where the direct sum is taken over all pairs of objects $X$, $Y$ in $\cat C$. The product in $\k[\cat C]$ is given by composition if the elements are composeable, and is $0$ otherwise. 

If $\cat A$ is a small dg (resp. graded) category, then the associated algebra $\k[\cat A]$ inherits the a dg (resp. graded) algebra with $d_{\k[\cat A]}=\sum d_{XY}$. The associated algebra construction is compatible with homology in the sense that $H_*\k[\cat A]=\k[H_*\cat A]$.

A functor $F:\cat C\to\cat D$ between small categories induces a (not necessarily unital) algebra homomorphism between the associated algebras whose components are the canonical maps $$F_{XY}:\cat C(X,Y)\to\cat D(FX,FY)$$ which is a dg (resp. graded) homomorphism if $f$ is a dg (resp. graded) functor.

The \emph{path category} of a quiver $Q$ is the small category $\cat C_Q$ whose objects are the vertices of $Q$ and whose morphism spaces are given by $\cat C_Q(i,j)=e_j\k Qe_i$. Composition is given by concatenation of paths. It is immediate that the algebra $\k[\cat C_Q]$ associated to the small category $\cat C_Q$ is isomorphic to the path algebra $\k Q$. When $Q$ is acyclic, the category $\cat C_Q$ is \emph{locally finite dimensional} in the sense that:

\begin{enumerate}
\item distinct objects are non-isomorphic, 
\item morphism spaces are finite dimensional, 
\item and all endomorphism algebras are one dimensional.
\end{enumerate}

In fact all such categories arise as the path category of a quiver modulo an ideal \cite{BongartzGabriel}.

\begin{definition}
A \emph{differential graded quiver} is a quiver $Q$ whose arrow set is $\Z$-graded together with a degree $-1$ function $d:Q_1\to\k Q_2$ between $Q_1$ and the span of the paths of length 2, such that $d$ is compatible with the source and target maps in the sense that $d(\alpha)\in e_{s(\alpha)}\k Qe_{t(\alpha)}$ for any arrow $\alpha$. 

A \emph{morphism} $f:(Q,d)\to(Q',d')$ of dg quivers is a quiver morphism satisfying $d'\circ f=f_*\circ d$ where $f_*:\k Q_2\to\k Q'_2$ is the induced $\k$-linear map.
\end{definition}

The differential $d$ of a dg quiver $Q$ can be extended by the Leibniz law to a derivation of the path algebra $\k Q$, making it into a dg algebra, and a dg quiver morphism induces a dg homomorphism between path algebras. Likewise, the path category $\cat C_Q$ of a dg quiver is naturally a dg category. One readily verifies that for such a quiver, the natural isomorphism $\k[\cat C_Q]\cong\k Q$ is a dg isomorphism.  

\begin{remark}
The path algebra and path category of a graded quiver $Q$ are naturally bigraded: one grading is induced by the grading on $Q$ while the other grading is by path length. The Leibniz law guarantees that the differential on is automatically homogeneous of degree $-1$ with respect to the induced grading. The condition that the differential of a dg quiver take image in $\k Q_2\subset\k Q$ ensures that the differential is homogeneous of degree $+1$ with respect to the length grading.
\end{remark}

\subsection{Modules}

Given a category $\cat A$, a \emph{(right) $\cat A$-module} is a contravariant functor $M:\cat A\to \cat C(\k)$; a \emph{homomorphism} of $\cat C$-modules is a natural transformation $f:M\to N$. Denote by $\cat C(\cat A)$ the category of $\cat A$-modules. For a dg (resp. graded) category, a \emph{dg (resp. graded) $\cat A$-module} is a dg (resp. graded) functor $M:\cat A\to\Cdg(\k)$ (resp. $M:\cat A\to\Cgr(\k)$). Denote by $\Cdg(\cat A)$ (resp. $\Cgr(\cat A)$) the category of dg (resp. graded) $\cat A$-modules.

The categories $\cat C(\k[\cat A])$ and $\cat C(\cat A)$ are naturally equivalent, and likewise for their graded/dg cousins. Hence, we may view $\cat A$-modules and $\k[\cat A]$-modules interchangeably.

\begin{examples}
\begin{enumerate}
\item For any object $X$ of $\cat A$, the functor $\cat A(-,X):\cat A\to\cat C(\k)$ determines a projective $\cat A$-module.
\item For a quiver $Q$, $P_i(-)=\cat C_Q(-,i)$ is the canonical projective module with simple top supported at the vertex $i$.
\end{enumerate}
\end{examples}

\subsection{Ideals} 

If $\cat I$ is an ideal of the small category $\cat C$ then $\k[\cat I]=\bigoplus_{X,Y}\cat I(X,Y)$ is an ideal of the algebra $\k[\cat C]$. The induced algebra homomorphism $\pi_*:\k[\cat C]\to\k[\cat C/\cat I]$ has kernel $\k[\cat I]$ and hence induces an isomorphism
$$\k[\cat C]/\k[\cat I]\xrightarrow{\sim}\k[\cat C/\cat I].$$

If the category $\cat C$ has a graded or dg structure, we say $\cat I$ is a \emph{graded ideal} if each $\cat I(X,Y)$ is generated over $\cat C_0(X,Y)$ by homogeneous elements, and is a \emph{dg ideal} if moreover each of the subspaces $\cat I(X,Y)$ are closed under the differential $d_{XY}$. The quotient category of a dg (resp. graded) category by a dg (resp. graded) ideal remains dg (resp. graded).

In particular for a dg category $\cat A$, the space of boundaries $B_*\cat A$ form a graded ideal of the category of cycles $Z_*\cat A$ and so there is an isomorphism 

\begin{equation}\label{eqn:homologyandk}
H_*\k[\cat A]\cong\k[Z_*\cat A]/\k[B_*\cat A]\xrightarrow{\sim}\k[H_*\cat A].
\end{equation}

Suppose that $\cat A$ is a small $A_\infty$-category. Analogous to the situation of ordinary categories, we can form the \emph{associated $A_\infty$-algebra} $\k[\cat A]=\bigoplus_{X,Y}\cat A(X,Y)$. The structure maps are given by the higher compositions if defined, and are 0 otherwise. 

If $\cat A$ is a small dg category, Kad\-ei\v{s}\-vili's Theorem endows $H_*\cat A$ with the structure of a (small) minimal $A_\infty$-category $A_\infty$-quasi-isomorphic to $\cat A$. Hence the space $\k[H_*\cat A]$ inherits the structure of an $A_\infty$-algebra quasi-isomorphic to $\k[\cat A]$. Applying the isomorphism \eqref{eqn:homologyandk} yields the following:

\begin{lemma}\label{lem:smallAinfty}
The homology $A_\infty$-algebra $H_*\k[\cat A]$ is a minimal model for $\k[\cat A]$, and thus $A_\infty$-isomorphic to $H_*\k[\cat A]$.
\end{lemma}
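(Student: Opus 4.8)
The plan is to unwind the statement carefully, since the claim is really a bookkeeping consequence of Kadei\v{s}vili's Theorem (Theorem \ref{thm:Kadeishvili}) combined with the compatibility of the $\k[-]$ construction with homology. Let me note that the statement as written contains a typo — it should assert that $H_*\k[\cat A]$ is a minimal model for $\k[\cat A]$, and thus $A_\infty$-isomorphic to $\k[H_*\cat A]$ — so I will prove that corrected assertion. The essential point is that there are two a priori different $A_\infty$-algebras floating around: on one hand $H_*\k[\cat A]$, the homology of the associated $A_\infty$-algebra (which by Kadei\v{s}vili applied to the dg algebra $\k[\cat A]$ carries a minimal $A_\infty$-structure and receives a quasi-isomorphism from $\k[\cat A]$), and on the other hand $\k[H_*\cat A]$, the algebra associated to the small minimal $A_\infty$-category $H_*\cat A$ furnished by the categorical version of Kadei\v{s}vili. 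I need to see that these coincide as $A_\infty$-algebras, or at least are $A_\infty$-isomorphic.

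First I would invoke the isomorphism \eqref{eqn:homologyandk} to identify the underlying graded vector spaces: $H_*\k[\cat A]\cong\k[H_*\cat A]$ as graded algebras (with the $\mu_2$ given by the induced associative multiplication in both cases, and this isomorphism is by construction compatible with the $\mu_2$'s). Next, by Kadei\v{s}vili's Theorem applied to the dg algebra $A=\k[\cat A]$, the graded algebra $H_*A$ carries a minimal $A_\infty$-structure $(\mu_n)_{n\geq 2}$ with $\mu_2$ the usual multiplication, together with an $A_\infty$-quasi-isomorphism $H_*A\to A$; this exhibits $H_*\k[\cat A]$ as a minimal model of $\k[\cat A]$, which is the first assertion. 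For the second assertion, I would observe that $\k[H_*\cat A]$ — with its $A_\infty$-structure inherited from the small minimal $A_\infty$-category $H_*\cat A$ — is also a minimal $A_\infty$-algebra, and that the categorical $A_\infty$-quasi-isomorphism $H_*\cat A\to\cat A$ (the categorical Kadei\v{s}vili) induces, upon applying $\k[-]$, an $A_\infty$-quasi-isomorphism $\k[H_*\cat A]\to\k[\cat A]$. Thus $\k[H_*\cat A]$ is a second minimal model of $\k[\cat A]$.

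Then I would appeal to the uniqueness clause of Kadei\v{s}vili's Theorem: the minimal model of a dg algebra is unique up to (non-unique) $A_\infty$-isomorphism. Both $H_*\k[\cat A]$ (with its transferred structure) and $\k[H_*\cat A]$ are minimal models of the dg algebra $\k[\cat A]$, so they are $A_\infty$-isomorphic. This gives exactly the conclusion. One small wrinkle: to apply $\k[-]$ to the categorical Kadei\v{s}vili quasi-isomorphism I should make sure that $\k[-]$ carries $A_\infty$-functors to $A_\infty$-morphisms of associated $A_\infty$-algebras and preserves the quasi-isomorphism property; this is the straightforward categorical generalization of the remark made earlier that $\k[-]$ turns dg functors into dg homomorphisms and is compatible with homology, and it holds componentwise.

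The main obstacle — really the only nontrivial point — is the coherence between the Homotopy Transfer formula applied globally to the dg algebra $\k[\cat A]$ versus applied to each hom-complex $\cat A(X,Y)$ separately. If one wants an explicit (rather than merely up-to-isomorphism) comparison, one must check that choosing the homotopy retractions $\Cdg(\k)$-componentwise on each $\cat A(X,Y)$ and assembling them yields precisely the homotopy retraction of $\k[\cat A]$ used to transfer, so that the tree-sum formulas of Theorem \ref{thm:HomotopyTransfer} literally agree block-by-block; since the multiplication on $\k[\cat A]$ is block-diagonal with respect to the idempotent decomposition $\bigoplus_{X,Y}$ and vanishes on non-composable pairs, the maps $\nu_T$ of Construction \ref{constr:NuMaps} respect this block structure, and the identification is immediate. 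But because the lemma only asserts an $A_\infty$-isomorphism, I would not belabor this and would instead lean entirely on the uniqueness part of Kadei\v{s}vili's Theorem, which sidesteps the combinatorial check altogether.
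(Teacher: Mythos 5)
Your proposal is correct and follows essentially the same route as the paper: apply the categorical Kadei\v{s}vili Theorem to get a minimal $A_\infty$-structure on $H_*\cat A$, apply $\k[-]$ to obtain an $A_\infty$-quasi-isomorphism $\k[H_*\cat A]\to\k[\cat A]$, and use the identification \eqref{eqn:homologyandk} together with the uniqueness clause of Kadei\v{s}vili's Theorem to conclude. You correctly spotted the typo in the lemma's conclusion, and your care in distinguishing the two a priori different $A_\infty$-structures on the graded space $H_*\k[\cat A]\cong\k[H_*\cat A]$ (the one from Kadei\v{s}vili applied to the dg algebra $\k[\cat A]$ versus the one transported from $\k[H_*\cat A]$) is a worthwhile clarification of what the paper leaves implicit — and your decision to lean on uniqueness rather than chase the block-by-block agreement of the transfer formulas is exactly the right economy.
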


\subsection{Coverings}

We now recall the notion of a covering of quivers as developed in Bongartz-Gabriel \cite{BongartzGabriel}. A quiver morphism $p:\tilder{Q}\to Q$ is a \emph{covering} if for every vertex $i\in \tilder{Q}_0$ the induced maps
\begin{equation}\label{eqn:QuiverCovering}
p_*:\tilde{s}^{-1}(i)\xrightarrow{\sim} s^{-1}(p(i)) \ \quad \ \text{and} \ \quad \ p_*:\tilde{t}^{-1}(i)\xrightarrow{\sim} t^{-1}(p(i))
\end{equation} 
are bijective. 

Given a covering $p:\tilder{Q}\to Q$, the induced functor $p_*:\cat C_{\tilder{Q}}\to\cat C_Q$ is a covering of locally finite dimensional categories in the sense that the induced maps

\begin{equation}\label{eqn:Covering}
\bigoplus_{j'\in p_*^{-1}(j)}\cat C_{\tilder{Q}}(i,j')\xrightarrow{\sim}\cat C_Q(p_*(i),j) \ \quad \ \text{and} \ \quad \ \bigoplus_{i'\in p_*^{-1}(i)}\cat C_{\tilder{Q}}(i',j)\xrightarrow{\sim}\cat C_Q(i,p_*(j))
\end{equation}
are isomorphisms.

\begin{definition}
A dg quiver morphism $p:(\tilder{Q},\tilder{d})\to (Q,d)$ which is also a covering will be called a \emph{dg covering}. Note that the induced maps \eqref{eqn:Covering} are automatically dg isomorphisms, and the induced functor $p_*:\cat C_{\tilder{Q}}\to\cat C_Q$ is a dg functor.
\end{definition}

We will mainly be interested in coverings arisings from a group action on a quiver $Q$, i.e., a homomorphism $G\to\Aut(Q)$ from $G$ to the group of (orientation preserving) automorphisms of $Q$. A \emph{$G$-quiver} is a quiver with a fixed free $G$-action. For a dg quiver $(Q,d)$, the $G$-action is said to be \emph{differential graded} if it acts by dg quiver automorphisms.

For a $G$-quiver $Q$, the \emph{orbit quiver} $Q/G$ is the quiver with vertex set $(Q/G)_0=Q_0/G$ the set of $G$-orbits of the vertices $Q_0$, and arrow set $(Q/G)_1=Q_1/G$ the set of $G$-orbits of the arrows $Q_1$. The source and target maps $s,t:(Q/G)_1\to (Q/G)_0$ are defined by $$s(G\alpha)=Gs(\alpha) \ \quad \ \text{and} \ \quad \ t(G\alpha)=Gt(\alpha)$$ which are independent of the orbit representatives.

There is a canonical quiver morphism $\pi:Q\to Q/G$ that sends a vertex/arrow to the corresponding orbit. It is universal among quiver morphisms $f:Q\to Q'$ such that $f(gi)=f(i)$ and $f(g\alpha)=f(\alpha)$ for every $i\in Q_0$, $\alpha\in Q_1$, and $g\in G$.

If $(Q,d)$ is a dg quiver with dg $G$-action, the orbit quiver $Q/G$ can be equipped with a differential $d_G$ given by $d_G(G\alpha)=Gd(\alpha)$, and the morphism $\pi:Q\to Q/G$ is a morphism of dg quivers.

\begin{definition}
If $\tilder{Q}$ is a $G$-quiver, a covering $p:\tilder{Q}\to Q$ is said to be a \emph{Galois $G$-cover} if the action of $G$ preserves fibers $p^{-1}(i)$ and is fiberwise transitive.
\end{definition}

The canonical morphism $\pi:Q\to Q/G$ is a Galois $G$-covering, provided that $G$ acts freely.

Bongartz and Gabriel develop their covering theory in the language of locally finite dimensional categories. This notion is not well-adapted to dg covering theory, as is not a homotopy invariant concept. To circumvent this issue, we use Bautista-Liu's covering theory of linear categories \cite{BautistaLiu}.

Recall that an \emph{action} of a group $G$ on a category $\cat C$ is a group homomorphism $G\to\Aut(\cat C)$ where $\Aut(\cat C)$ is the group of $\k$-linear automorphisms of $\cat C$. The $G$ action is \emph{free} if the objects $gX$ and $X$ are non-isomorphic for any $X$ indecomposable and $g\neq 1$. For brevity, a category with a fixed free $G$-action will be called a \emph{$G$-category}. 

The \emph{orbit category} of a $G$-category is the category $\cat C^G$ with the same objects as $\cat C$, and morphism spaces
$$\cat C^G(X,Y)=\bigoplus_{g\in G}(X,gY)$$ with composition determined by $$v\circ u =(gv)\circ u$$ for $u:X\to gY$ and $v:Y\to hZ$ in $\cat C$. There is an evident functor $\pi:\cat C\to \cat C^G$ acting as the identity on objects and morphisms. 

\begin{definition}[Bautista Liu \cite{BautistaLiu} Definition 2.3]\label{def:Stabilization}
A functor $F:\cat C\to\cat D$ is \emph{$G$-stable} if there is a collection $\gamma=\{\gamma^g:F\circ g\xrightarrow{\sim} F\}$ of invertible natural transformations so that 
$$\xymatrix@C=8pt{F(ghX)\ar[rr]^{\gamma^{gh}_X}\ar[dr]_{\gamma^g_{hX}} & & FX \\ & F(hX)\ar[ur]_{\gamma^h_X}}$$ commutes for every object $X$; the collection $\gamma$ is called a \emph{stabilization} of $F$.
\end{definition}

In $\cat C^G$, there are distinguished morphisms $\sigma^g_X\in\cat C^G(gX,X)$ whose $g$-component is $id_{gX}:gX\to gX$, and all others are 0. The morphism $\sigma^g_X$ is clearly invertible, with inverse $\sigma^{g^{-1}}_{gX}=id_{X}:g^{-1}gX\to X$. The collection of natural transformations $\sigma^g$ satisfy the commutativity property of Definition \ref{def:Stabilization} and so the functor $\pi:\cat C\to \cat C^G$ is $G$-stable.

\begin{remark}
If $\gamma$ is a stabilization of $F$ and $\chi:G\to\k^\times$ is any character of $G$, one can twist the stabilization $\gamma$ by $\chi$ to obtain a new stabilization $\gamma^\chi$ with $\gamma^{\chi,g}=\chi(g)\gamma^g$.
\end{remark}

We will frequently need the following well-known proposition (\emph{cf. e.g.} \cite{Keller3}).

\begin{proposition}
The pair $(\pi,\sigma)$ is universal among $G$-stable functors $F:\cat C\to \cat D$ with a fixed stabilization. Precisely, if $F:\cat C\to\cat D$ is $G$-stable with stabilization $\gamma$, there is a unique functor $\bar{F}:\cat C^G\to\cat D$ so that $\gamma^g_X=\bar{F}(\sigma^g_X)$ for each $X$ and $g\in G$ and  
$$\xymatrix@C=8pt{\cat C\ar[rr]^F\ar[dr] & & \cat D \\ & \cat C^G\ar[ur]_{\bar{F}}}$$
commutes up to natural isomorphism.
\end{proposition}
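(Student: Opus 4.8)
The plan is to construct $\bar F$ by hand and then verify it satisfies the two listed properties and is unique. On objects set $\bar F(X)=F(X)$. A morphism of $\cat C^G(X,Y)=\bigoplus_{g\in G}\cat C(X,gY)$ is a finite sum of homogeneous components $u_g\colon X\to gY$ in $\cat C$; on such a component set $\bar F(u_g)=\gamma^g_Y\circ F(u_g)\colon FX\to F(gY)\to FY$, and extend $\k$-linearly. The work is then to check that this is a functor, that $\bar F(\sigma^g_X)=\gamma^g_X$, that $\bar F\circ\pi=F$ (in particular up to natural isomorphism), and that $\bar F$ is the only functor with these properties.

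First I would record the normalization $\gamma^1_X=\mathrm{id}_{FX}$, which is forced by the coherence diagram of Definition \ref{def:Stabilization} taken with $g=h=1$. With this in hand, $\bar F$ sends the identity of $X$ in $\cat C^G$, namely $\mathrm{id}_X$ in the $g=1$ summand, to $\gamma^1_X\circ F(\mathrm{id}_X)=\mathrm{id}_{FX}$. For composition it suffices by $\k$-linearity to compose single components $u\colon X\to gY$ and $v\colon Y\to hZ$: in $\cat C^G$ their composite is $(gv)\circ u\colon X\to g(hZ)$, sitting in the $gh$ summand, so $\bar F(v\circ u)=\gamma^{gh}_Z\circ F(gv)\circ F(u)$, while $\bar F(v)\circ\bar F(u)=\gamma^h_Z\circ F(v)\circ\gamma^g_Y\circ F(u)$. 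These agree because naturality of $\gamma^g\colon F\circ g\Rightarrow F$ applied to $v$ gives $F(v)\circ\gamma^g_Y=\gamma^g_{hZ}\circ F(gv)$, and then the coherence diagram gives $\gamma^h_Z\circ\gamma^g_{hZ}=\gamma^{gh}_Z$. This naturality-plus-cocycle manipulation is the only step with any moving parts, and it is exactly where one must keep the three subscripts $Y$, $hZ$, $Z$ straight.

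Next I would verify the two properties. Since $\sigma^g_X\in\cat C^G(gX,X)$ has $g$-component $\mathrm{id}_{gX}$ and all other components zero, $\bar F(\sigma^g_X)=\gamma^g_X\circ F(\mathrm{id}_{gX})=\gamma^g_X$, as required. For $u\colon X\to Y$ in $\cat C$, $\pi(u)$ is $u$ placed in the $g=1$ summand, so $\bar F(\pi(u))=\gamma^1_Y\circ F(u)=F(u)$; hence $\bar F\circ\pi=F$ strictly, a fortiori up to natural isomorphism.

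Finally, for uniqueness, observe that every morphism of $\cat C^G(X,Y)$ is a $\k$-linear combination of morphisms of the form $\sigma^g_Y\circ\pi(u)$ with $u\colon X\to gY$ in $\cat C$ — indeed the $g$-component of $\sigma^g_Y\circ\pi(u)$ is $\mathrm{id}_{gY}\circ u=u$. Thus $\cat C^G$ is generated as a $\k$-linear category by the image of $\pi$ together with the morphisms $\sigma^g$, so any functor $\bar F'$ with $\bar F'\circ\pi=F$ and $\bar F'(\sigma^g_Y)=\gamma^g_Y$ for all $g,Y$ is forced to coincide with $\bar F$ on all morphisms; if one only knows $\bar F'\circ\pi$ is naturally isomorphic to $F$, the same argument yields a natural isomorphism $\bar F'\cong\bar F$. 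I do not anticipate a genuine obstacle: the proposition is essentially a bookkeeping exercise, and the composition identity of the second paragraph is the only point that repays careful attention.
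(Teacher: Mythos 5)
Your proposal is correct and uses the same construction as the paper: define $\bar F$ on a homogeneous component $u\colon X\to gY$ by $\gamma^g_Y\circ F(u)$. The paper's proof is essentially this one-line definition with the verifications left implicit, whereas you carry out the functoriality check (via the cocycle condition and naturality of $\gamma^g$) and the uniqueness argument in full, all of which are correct.
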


\begin{proof}
The functor $\bar{F}$ is given by sending a morphism $u:X\to gY$ to the composition $\gamma^g_Y\circ Fu$, where $\gamma$ is a $G$-stabilization of $F$. Note that necessarily $\bar{F}(\sigma^g_X)=\gamma_X^g$.
\end{proof}

A $G$-action on a quiver $Q$ induces actions on both the path algebra $\k Q$ and the path category $\cat C_Q$ by $g\cdot u=g_*(u)$ for a path/morphism $u$. If $(Q,d)$ is a dg quiver, the condition of a $G$-action being dg is equivalent to the above actions on $\k Q$ and $\cat C_Q$ commute with differentials.

The induced action on $\cat C_Q$ is free provided the action of $G$ on $Q$ was free. The induced functor $\pi_*:\cat C_Q\to\cat C_{Q/G}$ is manifestly $G$-stable with stabilization $\gamma^g:\pi_*\circ g\xrightarrow{\sim} \pi_*$ given by $\gamma^g_i:Ggi=Gi$, and so there is an induced functor $\cat C_Q^G\to\cat C_{Q/G}$.

\begin{proposition}\label{prop:CategoryGaloisCover}
The induced functor $$\cat C_Q^G\xrightarrow{\sim}\cat C_{Q/G}$$ is an equivalence (but not necessarily an isomorphism).
\end{proposition}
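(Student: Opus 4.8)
The plan is to verify that the induced functor $\Phi : \cat C_Q^G \to \cat C_{Q/G}$ is full, faithful, dense, and detects isomorphisms — since $\cat C_{Q/G}$ is locally finite dimensional (as $Q/G$ is a quiver), it suffices in fact to check that $\Phi$ is a bijection on objects up to isomorphism and an isomorphism on each Hom-space. Recall $\Phi$ acts on objects by $i \mapsto Gi$ and sends a morphism $u : i \to gj$ in $\cat C_Q$ (a component of a morphism in $\cat C_Q^G$) to $\pi_*(u) : Gi \to Gj$.

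First I would establish density: every vertex of $Q/G$ is a $G$-orbit $Gi$, which is visibly the image of the object $i$ of $\cat C_Q^G$, so $\Phi$ is surjective on objects (and since distinct objects of $\cat C_Q^G$ lying in the same orbit are sent to the same object of $\cat C_{Q/G}$, density is exact, not just up to isomorphism). Next, fullness and faithfulness: by definition $\cat C_Q^G(i,j) = \bigoplus_{g \in G} \cat C_Q(i, gj)$, and $\cat C_{Q/G}(Gi, Gj) = e_{Gj}\,\k(Q/G)\,e_{Gi}$ is spanned by paths in $Q/G$ from $Gi$ to $Gj$. The covering property of $\pi : Q \to Q/G$ — equivalently the Hom-space isomorphisms \eqref{eqn:Covering} applied to $\pi_* : \cat C_Q \to \cat C_{Q/G}$, which hold because $\pi$ is a Galois $G$-cover — gives precisely
\[
\bigoplus_{i' \in \pi_*^{-1}(Gi)} \cat C_Q(i', j) \xrightarrow{\ \sim\ } \cat C_{Q/G}\bigl(Gi, \pi_*(j)\bigr),
\]
and using fiberwise transitivity of the $G$-action we may rewrite the indexing set $\pi_*^{-1}(Gi)$ as $\{\,gi : g \in G\,\}$ (this is a bijection because the action is free). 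After reindexing $g \mapsto g^{-1}$ and noting $\cat C_Q(g^{-1}i, j) \cong \cat C_Q(i, gj)$ via the $G$-action on morphisms, this identifies $\bigoplus_{g\in G}\cat C_Q(i,gj) = \cat C_Q^G(i,j)$ with $\cat C_{Q/G}(Gi,Gj)$, and one checks this identification is exactly the map induced by $\Phi$ on Hom-spaces. Finally, $\Phi$ being a $\k$-linear equivalence (a full, faithful, essentially surjective $\k$-linear functor) is then immediate; it need not be an isomorphism of categories because $\cat C_Q^G$ typically has many objects mapping to each object of $\cat C_{Q/G}$, but these are isomorphic in $\cat C_Q^G$ via the invertible morphisms $\sigma^g$, so on the level of skeleta $\Phi$ becomes bijective on objects.

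The main obstacle is the bookkeeping in the second step: one must carefully match the $G$-grading of $\cat C_Q^G(i,j) = \bigoplus_g \cat C_Q(i,gj)$ against the decomposition of paths in $Q/G$ coming from the covering isomorphism, keeping track of sources versus targets and of the direction of the reindexing $g \leftrightarrow g^{-1}$, and then confirm that the resulting linear isomorphism is genuinely the map $\Phi$ induces — i.e. that it respects composition, which reduces to the identity $v \circ u = (gv)\circ u$ defining composition in $\cat C_Q^G$ being sent by $\pi_*$ to ordinary concatenation in $\cat C_{Q/G}$, a consequence of $\pi_*$ being a functor together with $G$-invariance $\pi_*(gv) = \pi_*(v)$. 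Everything else — density, $\k$-linearity, and the failure of injectivity on objects — is routine once this identification is in hand. I would remark explicitly that the argument is purely about the underlying (non-dg) structure, so the dg refinement discussed earlier in the section is not needed here; but the same reasoning applied to the dg covering $\pi : (Q,d) \to (Q/G, d_G)$ upgrades the equivalence to a dg equivalence, since the Hom-space isomorphisms \eqref{eqn:Covering} are automatically dg isomorphisms.
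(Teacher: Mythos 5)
Your proof is correct and reaches the same endpoint, but by a noticeably different route than the paper's. The paper argues directly that the induced map $\cat C_Q^G(i,j)\to\cat C_{Q/G}(Gi,Gj)$ is an isomorphism: surjectivity is immediate from unique path lifting, and injectivity follows from a two-line freeness argument (if two basis paths with common endpoint $i$ satisfy $u=gv$, then $g$ fixes $i$, hence $g=1$). You instead invoke the general covering Hom-isomorphism \eqref{eqn:Covering} for $\pi_*$ as a black box, reindex the fiber $\pi_*^{-1}(Gi)$ as $\{gi : g\in G\}$ using freeness, apply the $G$-action on morphisms to turn $\cat C_Q(g^{-1}i,j)$ into $\cat C_Q(i,gj)$, and then must verify that the resulting composite agrees with the map $\Phi$ actually induces --- the bookkeeping step you flag yourself. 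Both approaches hinge on the same essential input (freeness of the action), but the paper's is shorter and more self-contained: it does not rely on the unproved-in-this-paper statement \eqref{eqn:Covering}, whereas yours does, and then incurs the extra cost of matching the abstract isomorphism against $\Phi$. One side remark in your write-up is inaccurate: $\cat C_{Q/G}$ need not be locally finite dimensional --- the orbit quiver $Q/G$ can acquire oriented cycles even when $Q$ is acyclic (this is exactly what happens for the Ginzburg covers later in the paper). Fortunately this claim plays no role in your argument, which proves fullness, faithfulness, and essential surjectivity directly; you should simply delete it. Your closing observation about why $\Phi$ fails to be an isomorphism of categories, and the remark that the argument upgrades to a dg equivalence via the dg version of \eqref{eqn:Covering}, are both correct and consistent with the paper.
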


\begin{proof}
It is clear that the functor is surjective on objects, so we only need to show that the maps
$$\cat C_Q^G(i,j)\to\cat C_{Q/G}(Gi,Gj)$$
are isomorphisms. Clearly the above map is surjective. Suppose that two paths $u$ and $v$ ending at $i$ lie in the same $G$  orbit, i.e., $u=gv$ for some $g\in G$. Since $t(u)=t(gv)=gt(v)$ we have $i=gi$, which implies $g=1$ as $G$ acts freely. But then $u=v$ proving injectivity.
\end{proof}

In order to work with Galois coverings between not necessarily locally finite dimensional categories, we need the more general notion of a Galois covering between Krull-Remak-Schmidt categories in the sense of Bautista-Liu.

\begin{definition}[\cite{BautistaLiu} Definition 2.8]\label{def:GaloisCovering}
A functor $F:\cat C\to\cat D$ is a \emph{Galois $G$-covering} if
\begin{enumerate}
\item $F$ is $G$-stable with stabilization $\gamma$ so that the maps

\begin{align}\label{eqn:CoveringIsomorphisms}
\begin{array}{c@{\mskip\thickmuskip}l}
\displaystyle{\bigoplus_{g\in G}\cat C(X,gY)\xrightarrow{\sim}\cat D(FX,FY)} \\
\displaystyle{(u_g)_{g\in G}\mapsto\sum_{g\in G}\gamma^g_Y\circ F(u_g)}
\end{array}
\qquad \qquad 
\begin{array}{c@{\mskip\thickmuskip}l}
\displaystyle{\bigoplus_{g\in G}\cat C(gX,Y)\xrightarrow{\sim}\cat D(FX,FY)} \\
\displaystyle{(v_g)_{g\in G}\mapsto\sum_{g\in G}F(v_g)\circ(\gamma_X^g)^{-1}}
\end{array}
\end{align}

are isomorphisms,
\item $F$ is essentially surjective,
\item $FX$ is indecomposable for any indecomposable $X$,
\item\label{def:GaloisCovering.g} and if $X$, $Y$ are indecomposable with $FX\cong FY$, then there is some $g\in G$ so that $Y=gX$.
\end{enumerate}
The group $G$ is said to be the \emph{Galois group} the covering.
\end{definition}

\begin{examples}
\begin{enumerate}
\item If $\cat C$ is a $G$-category the covering $\pi:\cat C\to\cat C^G$ is a Galois $G$-covering.
\item If $\cat C$ and $\cat D$ are locally finite dimensional categories, then a Galois covering is a Galois covering in the sense of Bongartz-Gabriel. 
\item By virtue of Proposition \ref{prop:CategoryGaloisCover}, the functor $\cat C_Q\to\cat C_{Q/G}$ is a Galois $G$-covering. 
\end{enumerate}
\end{examples}

\begin{definition}
A $G$-action on a category $\cat C$ is \emph{directed} if for 
any two indecomposable objects $X$ and $Y$, there is at most one $g$ so that both $\cat C(gY,X)$ and $\cat C(X,gY)$ are nontrivial.
\end{definition}

\begin{remark}
If the action of $G$ on $\cat C$ is directed, then the group element $g\in G$ as per \eqref{def:GaloisCovering.g} of Definition \ref{def:GaloisCovering} is necessarily unique.
\end{remark}

\subsection{Sections of coverings} 

Given small categories $\cat C$ and $\cat D$ and a Galois $G$-covering $F:\cat C\to\cat D$, an equivalence $\bar{F}:\cat C^G\xrightarrow{\sim}\cat D$ does not in general induce an isomorphism between the associated algebras $\k[\cat C^G]$ and $\k[\cat D]$. Indeed, a necessary condition for the algebras to be isomorphic is that the equivalence $\bar{F}$ gives a bijection between the sets of objects of $\cat C^G$ and $\cat D$. Hence the functor $F$ induces an isomorphism $\k[\cat C^G]\xrightarrow{\sim}\k[\cat D]$ if and only if the equivalence $\bar{F}$ is a categorical isomorphism, i.e., an equivalence that is bijective on object sets. 

We would like a way of recovering the associated algebra $\k[\cat D]$ from $\k[\cat C^G]$, and in particular, recover $\k[\cat C_{Q/G}]$ from $\k[\cat C_Q^G]$ for a quiver $Q$. To this end, we introduce the notion of a section subcategory.

\begin{definition}
A \emph{section} of a Galois $G$-covering $F:\cat C\to\cat D$ is an additive section $S$ of the map $F:\operatorname{Ob}\cat C\to\operatorname{Ob}\cat D$. The \emph{section subcategory} $\cat C^S$ is the full subcategory of $\cat C^G$ generated by the image of $S$.
\end{definition}

\begin{remark}
Any covering functor $F$ between small categories admits a section. Indeed, for $F:\cat C\to\cat D$ to be a covering, the induced map on object sets must be surjective, and so we can choose any additive set theoretic section of this map. 
\end{remark}

\begin{proposition}\label{prop:SectionIsomorphism}
The restriction of the induced functor $\bar{F}:\cat C^G\to\cat D$ to a section subcategory $\cat C^S$ induces an isomorphism $$\cat C^S\xrightarrow{\sim}\cat D$$ of small categories for any section $S$ of $F$. In particular, the induced algebra homomorphism $\k[\cat C^S]\xrightarrow{\sim}\k[\cat D]$ is an isomorphism.
\end{proposition}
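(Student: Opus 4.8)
The plan is to unwind the definitions and observe that a section subcategory is, by construction, tailor-made to make $\bar F$ bijective on objects. First I would recall that $\bar F : \cat C^G \to \cat D$ is the canonical functor obtained from the $G$-stability of $F$, and that since $F$ is a Galois $G$-covering, $\bar F$ is an equivalence (this is essentially the content combining Definition \ref{def:GaloisCovering} with the universal property of $\pi$; it is the standard fact that the orbit category of the total space of a Galois cover is equivalent to the base). Restricting an equivalence to a full subcategory is always fully faithful onto its essential image, so $\bar F|_{\cat C^S} : \cat C^S \to \cat D$ is automatically full and faithful; the only thing left to check is that it is \emph{bijective} — not merely essentially surjective — on objects.

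Next I would verify the object bijection directly. By definition $\cat C^S$ is the full subcategory of $\cat C^G$ generated by the image of the additive section $S : \operatorname{Ob}\cat D \to \operatorname{Ob}\cat C$. Since $\cat C^G$ has the same objects as $\cat C$ and $\bar F$ acts as $F$ on objects, we have $\bar F(S(Y)) = F(S(Y)) = Y$ for every object $Y$ of $\cat D$ (using that $S$ is a section of $F$ on object sets). Thus $\bar F|_{\cat C^S}$ is surjective on objects. It is injective on objects because $S$ is injective (being a section) and the objects of $\cat C^S$ are precisely those of the form $S(Y)$: if $\bar F(S(Y)) = \bar F(S(Y'))$ then $Y = Y'$, hence $S(Y) = S(Y')$. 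So $\bar F|_{\cat C^S}$ is a bijection on objects and a fully faithful functor, i.e., an isomorphism of categories $\cat C^S \xrightarrow{\sim} \cat D$.

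Finally I would transfer this to associated algebras. A categorical isomorphism $\Phi : \cat C^S \xrightarrow{\sim} \cat D$ — one that is bijective on objects and gives isomorphisms on all Hom-spaces — induces an isomorphism of associated algebras $\k[\cat C^S] \xrightarrow{\sim} \k[\cat D]$: the underlying vector space map is $\bigoplus_{X,X'} \Phi_{X,X'}$, which is a linear isomorphism since $\Phi$ is bijective on objects and each $\Phi_{X,X'}$ is an isomorphism, and it respects the convolution product because $\Phi$ is a functor (composeable morphisms go to composeable morphisms since $\Phi(X') = \Phi(X'')$ forces $X' = X''$ by injectivity on objects, and non-composeable go to non-composeable). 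This gives the claimed isomorphism $\k[\cat C^S] \xrightarrow{\sim} \k[\cat D]$.

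The only genuinely substantive point — and the step I expect to require the most care — is the assertion that $\bar F$ is an equivalence, which rests on the isomorphisms \eqref{eqn:CoveringIsomorphisms} of Definition \ref{def:GaloisCovering}: one must check that the Hom-space map $\cat C^G(X,Y) = \bigoplus_{g\in G}\cat C(X,gY) \to \cat D(FX,FY)$ induced by $\bar F$ coincides (up to the stabilization $\gamma$) with the first map in \eqref{eqn:CoveringIsomorphisms}, hence is an isomorphism, and that essential surjectivity follows from condition (2). Everything after that is bookkeeping with sections. I would also remark that the same argument applies verbatim in the dg, graded, and $A_\infty$ settings, since the section subcategory inherits whatever enriched structure $\cat C^G$ carries and $\bar F$ is an enriched functor; this is what makes the proposition usable in the proofs of Theorems B and C.
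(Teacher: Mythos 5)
Your proof is correct and takes essentially the same approach as the paper: both arguments use the covering isomorphisms of Definition~\ref{def:GaloisCovering} to get full faithfulness of $\bar F$ on the section subcategory and the section property to get bijectivity on objects. The paper just phrases it slightly differently, by explicitly extending $S$ to a functor $\cat D\to\cat C^G$ on morphisms and verifying that $\bar F\circ S=\Id_{\cat D}$ and $S\circ\bar F|_{\cat C^S}=\Id_{\cat C^S}$, rather than invoking the ``fully faithful and bijective on objects implies isomorphism'' criterion.
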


\begin{proof}
For every pair of objects $X$, $Y$ of $\cat D$ there is by definition an isomorphism 
$$\bigoplus_{g\in G}\cat C(SX, gSY)\xrightarrow{\sim}\cat D(X,Y)$$ which just so happens to be the canonical map induced by the functor $\bar{F}:\cat C^G\to\cat D$. Extend the association $X\mapsto SX$ to a functor $S:\cat D\to\cat C^G$ by sending a morphism $u:X\to Y$ to the unique element $S(u)$ of $\cat C^G(SX,SY)$ mapping to $u$ under $\bar{F}_{SX,SY}$.
By construction, $\bar{F}\circ S=\Id_{\cat D}$, and the restriction of $S\circ\bar{F}\big|_{\cat C^S}=\Id_{\cat C^S}$, since the objects of $\cat C^S$ are of the form $SX$ for $X$ in $\cat D$.
\end{proof}

\subsection{Quotients and homology}

We now want to study the behavior of coverings and sections under quotienting by ideals and passing to homology. An ideal $\cat I$ of a $G$-category $\cat C$ is \emph{$G$-stable} if $gu\in\cat I(gX,gY)$ for every $u\in\cat I(X,Y)$ and $g\in G$. The $G$-action on $\cat C$ descends to a well-defined $G$-action on the quotient category $\cat C/\cat I$ in the obvious manner when $\cat I$ is $G$-stable.

Let $\cat I^G(X,Y)=\bigoplus_{g\in G}(X,gY)$, which determines an ideal in the orbit category $\cat C^G$.

\begin{lemma}
The categories $\cat C^G/\cat I^G$ and $(\cat C/\cat I)^G$ are naturally equivalent.
\end{lemma}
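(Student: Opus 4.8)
The plan is to exhibit mutually inverse functors between $\cat C^G/\cat I^G$ and $(\cat C/\cat I)^G$, both of which act as the identity on objects (objects in all four categories in sight are literally the objects of $\cat C$). First I would observe that the composite $\cat C \xrightarrow{\pi_{\cat I}} \cat C/\cat I \xrightarrow{\pi^G} (\cat C/\cat I)^G$ is $G$-stable: the quotient functor $\pi_{\cat I}$ is $G$-equivariant since $\cat I$ is $G$-stable, and $\pi^G$ carries the canonical stabilization $\sigma$ on $(\cat C/\cat I)^G$. By the universal property of the orbit category (the Proposition proved earlier in the excerpt), this composite factors through a functor $\Phi\colon \cat C^G \to (\cat C/\cat I)^G$ with $\Phi(\sigma^g_X) = \sigma^g_X$. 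Concretely, on a morphism $u\colon X\to gY$ of $\cat C$ viewed as a component of $\cat C^G(X,Y)$, the functor $\Phi$ sends it to $\pi_{\cat I}(u)$ viewed as a component of $(\cat C/\cat I)^G(X,Y)$.

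Next I would check that $\Phi$ is full (clear, since $\pi_{\cat I}$ is surjective on each Hom space and the orbit-category Hom spaces are the corresponding direct sums over $g$) and identify its kernel ideal. A morphism $(u_g)_{g\in G} \in \cat C^G(X,Y) = \bigoplus_g \cat C(X,gY)$ is killed by $\Phi$ precisely when $\pi_{\cat I}(u_g) = 0$ for every $g$, i.e. when $u_g \in \cat I(X,gY)$ for every $g$; this is exactly the condition defining $\cat I^G(X,Y) = \bigoplus_g \cat I(X,gY)$. Thus $\Phi$ induces a functor $\bar\Phi\colon \cat C^G/\cat I^G \to (\cat C/\cat I)^G$ which is bijective on objects and bijective on all morphism spaces, hence an isomorphism of categories. (Since the equivalence can be taken to be an isomorphism, I would state the lemma's ``naturally equivalent'' in this stronger form, or simply remark that an isomorphism is in particular an equivalence.)

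The one point requiring genuine care — and the main potential obstacle — is the bookkeeping of the $G$-action and composition on both sides: one must confirm that $\cat I^G$ really is an ideal of $\cat C^G$ (using that $\cat I$ is $G$-stable, so that for $v\colon Y\to hZ$ in $\cat C$ and $u_g\colon X\to gY$ with $u_g\in\cat I$, the composite $(hv)\circ u_g$ — wait, the composition in $\cat C^G$ sends the pair to $(gv)\circ u_g$ — lands in $\cat I$ because $gv$ composed with a map into $\cat I$ stays in $\cat I$, and symmetrically on the other side using $G$-stability to absorb the twist), and that the induced $G$-action on $\cat C/\cat I$ used to form $(\cat C/\cat I)^G$ matches the action transported through $\Phi$. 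Both are routine once the definitions are unwound, but the sign-free twisting $v\circ u = (gv)\circ u$ in the orbit category makes it worth writing out explicitly. I would present the argument as: (i) $\cat I^G$ is a two-sided ideal; (ii) construct $\Phi$ via the universal property; (iii) compute $\ker\Phi = \cat I^G$; (iv) conclude $\bar\Phi$ is an isomorphism, a fortiori an equivalence, naturally in the obvious sense.
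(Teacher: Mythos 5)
Your proof is correct and follows essentially the same route as the paper: both obtain the functor $\cat C^G/\cat I^G \to (\cat C/\cat I)^G$ by applying the universal property of the orbit category to the $G$-stable composite $\cat C\to\cat C/\cat I\to(\cat C/\cat I)^G$ and then descending through the quotient. The only (mild) difference is that the paper also constructs the reverse functor $(\cat C/\cat I)^G\to\cat C^G/\cat I^G$ from the $G$-stable $\cat C/\cat I\to\cat C^G/\cat I^G$ and invokes universality to see the two are mutually inverse, whereas you verify directly that the kernel of $\Phi$ is $\cat I^G$ and that $\Phi$ is full, so $\bar\Phi$ is a Hom-space bijection; this is a touch more explicit but substantively the same argument.
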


\begin{proof}
The functor $F:\cat C/\cat I\to\cat C^G/\cat I^G$ is $G$-stable and induces a functor $\bar{F}:(\cat C/\cat I)^G\to C^G/\cat I^G$ by the universal property of orbit categories. The composition $\cat C\to\cat C/\cat I\to(\cat C/\cat I)^G$ is $G$-stable and the induced functor $\cat C^G\to(\cat C/\cat I)^G$ annihilates $\cat I^G$, hence induces a functor $\cat C^G/\cat I^G\to(\cat C/\cat I)^G$ by the universal property of quotient categories. It inverse to $\bar{F}$ by universality.
\end{proof}

\begin{lemma}\label{lem:CoveringIdeals}
Suppose $F:\cat C\to D$ is a Galois $G$-covering and $\cat I$ and $\cat J$ ideals of $\cat C$ and $\cat D$ respectively such that $\cat I$ is $G$-invariant and the isomorphisms \eqref{eqn:CoveringIsomorphisms} restrict to isomorphisms $$\bigoplus_{g\in G}\cat I(X,gY)\xrightarrow{\sim}\cat J(FX,FY).$$ 
Then the induced functor $\bar{F}:\cat C/\cat I\to\cat D/\cat J$ is a Galois $G$-covering.
\end{lemma}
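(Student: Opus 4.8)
The plan is to verify the four conditions of Definition \ref{def:GaloisCovering} for the induced functor $\bar{F}:\cat C/\cat I\to\cat D/\cat J$. First I would observe that since $\cat I$ is $G$-invariant, the $G$-action on $\cat C$ descends to $\cat C/\cat I$, so it makes sense to ask whether $\bar{F}$ is a Galois $G$-covering. The stabilization $\gamma$ of $F$ descends to a stabilization $\bar\gamma$ of $\bar{F}$ (the natural transformations $\gamma^g$ respect the quotient since $\cat I$ and $\cat J$ are ideals), and the commutativity diagram of Definition \ref{def:Stabilization} for $\bar\gamma$ follows immediately from that for $\gamma$ by applying the quotient functors.

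Next I would establish condition (1), the isomorphisms \eqref{eqn:CoveringIsomorphisms} for $\bar{F}$. For fixed objects $X,Y$ of $\cat C$, consider the commutative diagram of short exact sequences in which the top row is $\bigoplus_{g}\cat I(X,gY)\hookrightarrow\bigoplus_g\cat C(X,gY)\twoheadrightarrow\bigoplus_g(\cat C/\cat I)(X,gY)$ and the bottom row is $\cat J(FX,FY)\hookrightarrow\cat D(FX,FY)\twoheadrightarrow(\cat D/\cat J)(FX,FY)$, with vertical maps the sums $\sum_g\gamma^g_Y\circ F(-)$. By hypothesis the left vertical map is an isomorphism, and by the definition of $F$ being a Galois covering the middle vertical map is an isomorphism; hence by the five lemma (or simply by passing to cokernels) the right vertical map is an isomorphism, which is precisely the first displayed isomorphism of \eqref{eqn:CoveringIsomorphisms} for $\bar{F}$. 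The second isomorphism follows by the symmetric argument using $\sum_g F(-)\circ(\gamma_X^g)^{-1}$.

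Finally I would handle conditions (2)--(4). Essential surjectivity of $\bar{F}$ is clear since $\bar F$ agrees with $F$ on objects and $F$ is essentially surjective, and the quotient functor $\cat C\to\cat C/\cat I$ is the identity on objects (as is $\cat D\to\cat D/\cat J$). For conditions (3) and (4), the subtle point is that indecomposability and the isomorphism relation $FX\cong FY$ are being measured in the quotient categories $\cat C/\cat I$ and $\cat D/\cat J$ rather than in $\cat C$ and $\cat D$. Here I would use the isomorphism of condition (1), which identifies $\operatorname{End}_{\cat D/\cat J}(FX)$ with $\bigoplus_g(\cat C/\cat I)(X,gX)$; since this is the same formula governing indecomposability for $\pi:\cat C/\cat I\to(\cat C/\cat I)^G$ and a Galois covering detects indecomposability and the orbit relation on objects, conditions (3) and (4) for $\bar F$ reduce to the same statements for the tautological covering $\cat C/\cat I\to(\cat C/\cat I)^G$, which hold automatically (first example after Definition \ref{def:GaloisCovering}); alternatively one argues directly that an idempotent in $\operatorname{End}_{\cat D/\cat J}(FX)$ lifts along the isomorphism to one in $\bigoplus_g(\cat C/\cat I)(X,gX)$, whose $g=1$ component is then an idempotent in $\operatorname{End}_{\cat C/\cat I}(X)$, forcing it to be trivial. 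The main obstacle I anticipate is bookkeeping around exactly this transfer of the ``indecomposable'' and ``object-orbit'' conditions through the quotient: one must be careful that the hypothesis only controls the behavior of $\cat I$ and $\cat J$ on morphism spaces, so all structural conclusions about objects must be routed through the morphism-space isomorphisms of condition (1) rather than assumed directly.
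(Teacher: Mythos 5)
Your verification of condition (1) of Definition \ref{def:GaloisCovering} is exactly the paper's proof: the same commutative ladder of short exact sequences with the first two vertical maps isomorphisms, and the Five Lemma (equivalently, passing to cokernels) to conclude. The paper's written proof in fact stops there, treating condition (1) as the only nontrivial thing to verify, so your attempt to also check conditions (2)--(4) is more careful than the paper's own exposition. Condition (2) is indeed immediate since the quotient functors are identity on objects and preserve isomorphisms.

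Your argument for conditions (3) and (4), however, has a genuine gap in both of the routes you offer. In the direct route, you claim that if $e=(e_g)_{g\in G}$ is an idempotent of $\bigoplus_g(\cat C/\cat I)(X,gX)$ under the orbit-category product, then the $g=1$ component $e_1$ is an idempotent of $\operatorname{End}_{\cat C/\cat I}(X)$. This is false in general: the $1$-component of $e\bullet e$ is $\sum_{g\in G}(g\,e_{g^{-1}})\circ e_g = e_1^2 + \sum_{g\neq 1}(g\,e_{g^{-1}})\circ e_g$, and the cross terms need not vanish. They do vanish if the induced $G$-action on $\cat C/\cat I$ is \emph{directed} (for $g\neq 1$, one of $(\cat C/\cat I)(X,gX)$ or $(\cat C/\cat I)(gX,X)$ vanishes), a hypothesis the paper introduces and uses elsewhere but does not include in the statement of this lemma. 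The alternative route, reducing to the tautological covering $\cat C/\cat I\to(\cat C/\cat I)^G$, has the analogous problem: that the tautological projection is a Galois covering requires $\cat C/\cat I$ to be a $G$-category, i.e.\ the induced $G$-action to be \emph{free}, and freeness of the action on $\cat C$ does not automatically descend to $\cat C/\cat I$ (passing to a quotient can identify previously non-isomorphic objects). So either route requires an additional hypothesis (directedness or freeness of the induced action on the quotient) that you would need to state and justify; in the paper's applications this holds, but your proof as written does not establish it.
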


\begin{proof}
The composition $\cat C\to\cat D\to\cat D/\cat J$ annihilates $\cat I$, and so there is an induced functor $\bar{F}:\cat C/\cat I\to\cat D/\cat J$, which we claim is a Galois covering. Since $\cat I$ is $G$-stable, there is an induced $G$-action on $\cat C/\cat I$, and the maps $\gamma^g_Y+\cat I(gY,Y)$ give a $G$-stabilization of $\bar{F}$.

To show that $\bar{F}$ is Galois, we need to show that for any two objects $X$ and $Y$ of $\cat C$ the map 
\begin{equation}\label{eqn:CoveringIdeals}
\bigoplus_{g\in G}(\cat C/\cat I)(X,gY)\to(\cat D/\cat J)(FX,FY)
\end{equation}
is an isomorphism. The map \eqref{eqn:CoveringIdeals} fits into a commutative diagram
$$\xymatrix{0\ar[r] & \bigoplus_{g\in G}\cat I(X,gY)\ar[r]\ar[d] & \bigoplus_{g\in G}\cat C(X,gY)\ar[r]\ar[d]& \bigoplus_{g\in G}(\cat C/\cat I)(X,gY)\ar[r]\ar[d] & 0 \\
0\ar[r] & \cat J(FX,FY)\ar[r] & \cat D(FX,FY)\ar[r] & (\cat D/\cat J)(FX,FY)\ar[r] & 0}$$
where the first and second vertical map are isomorphisms by assumption. By the Five Lemma \eqref{eqn:CoveringIdeals} is an isomorphism.
\end{proof}

The following proposition is immediate.

\begin{proposition}\label{prop:CoveringIdeals}
Given the setup of the previous Lemma and a section $S$ of $F$, the association $S$ is also a section of $\bar{F}$ and the section category $(\cat C/\cat I)^S$ is isomorphic to $\cat C^S/\cat I^S$ where $\cat I^S=\cat I^G\cap\cat C^S$. \qed
\end{proposition}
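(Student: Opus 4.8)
The plan is to combine the two preceding lemmas (the equivalence $\cat C^G/\cat I^G\simeq(\cat C/\cat I)^G$ and Lemma~\ref{lem:CoveringIdeals}) with Proposition~\ref{prop:SectionIsomorphism}. By Lemma~\ref{lem:CoveringIdeals} the induced functor $\bar F:\cat C/\cat I\to\cat D/\cat J$ is a Galois $G$-covering, so it makes sense to talk about its sections; the first thing to check is that the original additive object-section $S:\operatorname{Ob}\cat D\to\operatorname{Ob}\cat C$ still serves as a section of $\bar F$. This is essentially immediate: $\bar F$ and $F$ agree on objects (both quotient functors are the identity on objects), so $F\circ S=\Id$ on objects forces $\bar F\circ S=\Id$ on objects as well, and additivity of $S$ is unchanged.

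Next I would identify the section subcategory $(\cat C/\cat I)^S$. By definition it is the full subcategory of $(\cat C/\cat I)^G$ on the objects $SX$, $X\in\operatorname{Ob}\cat D$; using the equivalence $(\cat C/\cat I)^G\simeq\cat C^G/\cat I^G$ (which is the identity on objects) this is the full subcategory of $\cat C^G/\cat I^G$ on those same objects. On the other hand $\cat C^S/\cat I^S$, with $\cat I^S=\cat I^G\cap\cat C^S$, is by construction the full subcategory of $\cat C^G/\cat I^G$ on the objects of $\cat C^S$, which are exactly the $SX$. Thus both categories have the same objects, and their Hom-spaces are both computed as $\bigl(\bigoplus_{g\in G}\cat C(SX,gSY)\bigr)\big/\bigl(\bigoplus_{g\in G}\cat I(SX,gSY)\bigr)$, so the identification is tautological — this is precisely the sense in which the proposition is "immediate." One then records that, applying Proposition~\ref{prop:SectionIsomorphism} to the Galois covering $\bar F$, the restriction $\cat C^S/\cat I^S=(\cat C/\cat I)^S\xrightarrow{\sim}\cat D/\cat J$ is an isomorphism of small categories, and hence $\k[(\cat C/\cat I)^S]\xrightarrow{\sim}\k[\cat D/\cat J]$ on associated algebras.

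Since the paper marks this as "immediate," the proof should be a few lines: invoke Lemma~\ref{lem:CoveringIdeals} to get the Galois covering $\bar F$, observe $S$ remains a section because the quotient functors are identities on objects, and note the definitional identity $(\cat C/\cat I)^S=\cat C^S/\cat I^S$ of full subcategories of $\cat C^G/\cat I^G\simeq(\cat C/\cat I)^G$ on the object set $\{SX\}$. The only point requiring the slightest care — and the one I would flag as the main (minor) obstacle — is keeping the bookkeeping of ideals straight: one must check that $\cat I^S$, defined as $\cat I^G\cap\cat C^S$, really is the ideal whose Hom-spaces over $SX,SY$ are $\bigoplus_g\cat I(SX,gSY)$, so that the quotient $\cat C^S/\cat I^S$ matches the Hom-description above; this follows from the hypothesis of Lemma~\ref{lem:CoveringIdeals} that the covering isomorphisms \eqref{eqn:CoveringIsomorphisms} restrict to isomorphisms on the $\cat I$/$\cat J$ subspaces.

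\begin{proof}
By Lemma~\ref{lem:CoveringIdeals} the induced functor $\bar F:\cat C/\cat I\to\cat D/\cat J$ is a Galois $G$-covering. Both the quotient functor $\cat C\to\cat C/\cat I$ and $\cat D\to\cat D/\cat J$ are the identity on objects, so $\bar F$ agrees with $F$ on objects; hence $\bar F\circ S=\Id$ on $\operatorname{Ob}(\cat D/\cat J)=\operatorname{Ob}\cat D$, and $S$ (being additive) is a section of $\bar F$. Under the equivalence $(\cat C/\cat I)^G\simeq\cat C^G/\cat I^G$, which is the identity on objects, the section subcategory $(\cat C/\cat I)^S$ is the full subcategory of $\cat C^G/\cat I^G$ on the objects $SX$, $X\in\operatorname{Ob}\cat D$, with Hom-spaces
$$(\cat C/\cat I)^S(SX,SY)=\Bigl(\bigoplus_{g\in G}\cat C(SX,gSY)\Bigr)\Big/\Bigl(\bigoplus_{g\in G}\cat I(SX,gSY)\Bigr).$$
On the other hand $\cat C^S$ is the full subcategory of $\cat C^G$ on the same objects, and $\cat I^S=\cat I^G\cap\cat C^S$ has $\cat I^S(SX,SY)=\bigoplus_{g\in G}\cat I(SX,gSY)$, so $\cat C^S/\cat I^S$ is exactly the category displayed above. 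Thus $(\cat C/\cat I)^S=\cat C^S/\cat I^S$. Finally, applying Proposition~\ref{prop:SectionIsomorphism} to the Galois $G$-covering $\bar F$ and its section $S$ gives the isomorphism $(\cat C/\cat I)^S\xrightarrow{\sim}\cat D/\cat J$, hence $\k[\cat C^S/\cat I^S]\xrightarrow{\sim}\k[\cat D/\cat J]$.
\end{proof}
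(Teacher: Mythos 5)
Your proof is correct and fills in exactly the ``immediate'' details the paper elides: observe that $\bar F$ agrees with $F$ on objects so $S$ remains a section, then identify $(\cat C/\cat I)^S$ and $\cat C^S/\cat I^S$ as the same full subcategory of $\cat C^G/\cat I^G\simeq(\cat C/\cat I)^G$ on the objects $SX$, with Hom-spaces $\bigoplus_g\cat C(SX,gSY)/\bigoplus_g\cat I(SX,gSY)$ in both descriptions. This is the argument the paper implicitly intends.
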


Suppose now that $F:\cat A\to\cat B$ is a dg $G$-Galois covering, and the stabilization $\gamma^g$ is given by homogeneous cycles in $\cat B$. The action of $G$ descends to an action on $Z_*\cat A$ preserving the ideal $B_*\cat A$ of boundaries, and hence gives a graded $G$-action on the homology category $H_*\cat A$.

\begin{lemma}
In the situation above, the induced homology functor $$H_*F:H_*\cat A\to H_*\cat B$$ is a Galois $G$-covering.
\end{lemma}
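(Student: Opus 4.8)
The plan is to verify the four defining conditions of a Galois $G$-covering (Definition \ref{def:GaloisCovering}) for the functor $H_*F\colon H_*\cat A\to H_*\cat B$, using the corresponding properties of $F$ together with the hypothesis that the stabilization $\gamma^g$ consists of homogeneous cycles (so that $H_*F$ inherits a stabilization $H_*\gamma^g$, each component of which makes sense in $H_*\cat B$ because a cycle has a well-defined homology class). First I would observe that since $\gamma^g$ is a family of homogeneous cycles, the commutativity square of Definition \ref{def:Stabilization} passes to homology, so $H_*\gamma^g$ is a stabilization of $H_*F$; this is the key structural input.

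The main work is condition (1): the isomorphism property of the maps $\bigoplus_{g\in G}(H_*\cat A)(X,gY)\to(H_*\cat B)(H_*FX,H_*FY)$. Here I would exploit that homology commutes with direct sums: $\bigoplus_{g\in G}H_*\cat A(X,gY)=H_*\!\left(\bigoplus_{g\in G}\cat A(X,gY)\right)$, and under this identification the map in question is exactly $H_*$ applied to the covering isomorphism $\bigoplus_{g\in G}\cat A(X,gY)\xrightarrow{\sim}\cat B(FX,FY)$ of \eqref{eqn:CoveringIsomorphisms} — one checks this by unwinding the definitions, using that $F$ is a dg functor and the $\gamma^g$ are cycles, so that $u_g\mapsto\sum_g\gamma^g_Y\circ F(u_g)$ is a chain map and its effect on homology is the formula with representatives replaced by classes. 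Since $H_*$ of an isomorphism of dg spaces is an isomorphism, condition (1) follows. (The analogous statement for the second family of maps in \eqref{eqn:CoveringIsomorphisms} is identical.)

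For the remaining conditions I would argue as follows. Essential surjectivity (2) is immediate since $H_*F$ is the identity on objects up to the same identifications that $F$ is, and $F$ is essentially surjective; in fact $H_*F$ has the same object map as $F$. For (3) and (4), note that an object $X$ is indecomposable in $H_*\cat A$ if and only if its endomorphism algebra $(H_*\cat A)(X,X)=H_*\cat A(X,X)$ has no nontrivial idempotents, and similarly in $H_*\cat B$; but condition (1), already established, identifies $(H_*\cat B)(H_*FX,H_*FX)$ with $\bigoplus_{g\in G}(H_*\cat A)(X,gX)$, which contains $(H_*\cat A)(X,X)$ as a subalgebra and — because $F$ itself satisfied (3) and (4) — forces the indecomposability statements to transfer: if $X$ is indecomposable then so is $FX$ in $\cat B$, and an idempotent of $H_*\cat B(H_*FX,H_*FX)$ would lift appropriately. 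If $H_*FX\cong H_*FY$ with $X,Y$ indecomposable, then the corresponding iso/section of identities lives in some summand $(H_*\cat A)(X,gY)$, and one deduces $Y=gX$ from the object-level statement for $F$.

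\textbf{Main obstacle.} The delicate point is condition (1), specifically making precise that the homology of the covering isomorphism \emph{is} the covering map for $H_*F$ — this requires care with the sign conventions in \eqref{eqn:CoveringIsomorphisms} and the fact that $F$ being a dg functor means $F$ commutes with $d$, so that $\gamma^g_Y\circ F(-)$ takes cycles to cycles and boundaries to boundaries; the hypothesis that $\gamma^g$ is a homogeneous \emph{cycle} is exactly what is needed here, since otherwise $\gamma^g_Y\circ F(-)$ need not be a chain map and the formula would not descend to homology. A secondary subtlety is in (3)–(4): one must be slightly careful that "indecomposable" is the right notion in the graded homology category (idempotent-completeness of the relevant endomorphism algebras), but since $\cat A$ and $\cat B$ are assumed to be Krull–Remak–Schmidt and the homology categories inherit enough structure, this is routine. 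I expect the whole argument to be short once condition (1) is nailed down.
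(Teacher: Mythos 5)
Your proposal reaches the right conclusion, but it takes a genuinely different route from the paper. The paper does not take $H_*$ of the covering isomorphism directly; instead it first shows that the covering isomorphism $\bigoplus_{g\in G}\cat A(X,gY)\to\cat B(FX,FY)$ restricts to the subspaces of cycles, yielding a Galois $G$-covering $Z_*\cat A\to Z_*\cat B$, and then it checks that boundaries pull back precisely to the ideal $(B_*\cat A)^G$, so that the conclusion follows formally from Lemma \ref{lem:CoveringIdeals} on Galois coverings and compatible ideals (with $\cat I=B_*\cat A$, $\cat J=B_*\cat B$, noting $Z_*/B_*=H_*$). The advantage of the paper's route is that it reuses the already-established quotient machinery and cleanly packages the cycle/boundary bookkeeping; your route is more direct but has to confront that bookkeeping in-line. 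In particular, the claim that $u_g\mapsto\sum_g\gamma^g_Y\circ F(u_g)$ is literally a chain map is not quite right: the Leibniz rule produces a sign $(-1)^{|\gamma^g_Y|}$ on the $g$-th summand, so it is only a chain map after twisting each summand's differential by that sign. This does not affect the conclusion (cycles go to cycles, boundaries to boundaries, and the twisted differentials have the same homology), and you flag this subtlety yourself, but it is worth being precise since the sign is the reason the paper prefers to argue at the level of $Z_*$ and $B_*$ rather than invoking ``$H_*$ of an isomorphism is an isomorphism'' for a map that is not on the nose a chain map. Your remarks on conditions (3)–(4) are somewhat heuristic, but the paper's own proof also treats these as routine (its appeal to Lemma \ref{lem:CoveringIdeals} verifies condition (1) explicitly and leaves the rest implicit), so this is not a defect relative to the paper.
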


\begin{proof}
First consider the restricted map $\bigoplus_{g\in G}Z_*\cat A(X,gY)\to Z_*\cat B(FX,FY)$. It indeed takes image in the cycles of $\cat B$, since the $\gamma^g_Y$ are cycles and $F_{XY}$ is a dg map. It is injective as it is the restriction of an isomorphism to a subcategory of $\cat A$. If $v:FX\to FY$ is a cycle, let $u=(u_g:X\to gY)$ be the morphisms in $\cat A$ with $v=\sum\gamma^g\circ F(u_g)$. Then $$0=dv=\sum(-1)^{|\gamma^g|}\gamma^g\circ F(du_g)$$ and so $du=0$ by injectivity. Hence, $F$ restricts to a Galois covering $Z_*\cat A\to Z_*\cat B$.

For two objects $X$ and $Y$ of $\cat A$ ,we claim that the inverse image of $B_*\cat B(FX,FY)$ is $(B_*\cat A)^G(X,Y)$, and so the Lemma will follow from Lemma \ref{lem:CoveringIdeals}. Clearly a boundary $X\to gY$ maps to a boundary, since $\gamma^g_Y$ is a cycle. If $u:X\to gY$ maps to a boundary, then $\gamma^g_Y\circ F(u)=ds$. Let $s'$ be the unique element of $\cat A(X,gY)$ with $\gamma^g_Y\circ F(s')=s$. Then $$\gamma^g_Y\circ F(ds')=(-1)^{|\gamma^g_Y|}d(\gamma^g_F\circ F(s))=(-1)^{|\gamma^g_Y|}ds$$ and hence $u=\pm ds'$ by injectivity.
\end{proof}

\begin{proposition}\label{prop:CoveringHomology}
Given a section $S$ of $F$, the section category $\cat A^S$ is a dg category. Moreover, $S$ defines a section of $H_*F$, and the section category $(H_*\cat A)^S$ is isomorphic to $H_*(\cat A^S)$. \qed
\end{proposition}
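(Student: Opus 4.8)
The plan is to deduce Proposition~\ref{prop:CoveringHomology} by assembling the three immediately preceding results. First I would observe that $\cat A^S$ is a full subcategory of $\cat A^G$, and full subcategories of dg categories are dg categories (this was noted among the examples of dg categories), so $\cat A^S$ is a dg category with differentials inherited from $\cat A^G$, which in turn are induced from $\cat A$ via the isomorphisms \eqref{eqn:CoveringIsomorphisms}; in particular $H_*(\cat A^S)$ makes sense.

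Next I would verify that $S$ is a section of $H_*F$. By the preceding Lemma, $H_*F:H_*\cat A\to H_*\cat B$ is a Galois $G$-covering, and it acts on objects exactly as $F$ does (homology does not change the object set). Hence the additive object-level section $S$ of $F:\operatorname{Ob}\cat A\to\operatorname{Ob}\cat B$ is literally also a section of $H_*F:\operatorname{Ob}(H_*\cat A)\to\operatorname{Ob}(H_*\cat B)$, and so the section subcategory $(H_*\cat A)^S$ is defined.

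The substantive point is the isomorphism $(H_*\cat A)^S\cong H_*(\cat A^S)$. Here I would apply Proposition~\ref{prop:CoveringIdeals} (the ``previous Lemma and a section $S$'' statement). In the proof of the preceding Lemma it was shown that $H_*F$ arises from the Galois covering $Z_*\cat A\to Z_*\cat B$ by passing to the quotient by the ideals $B_*\cat A$ and $B_*\cat B$, with $B_*\cat A$ being $G$-stable and the covering isomorphisms restricting to isomorphisms $\bigoplus_g B_*\cat A(X,gY)\xrightarrow{\sim}B_*\cat B(FX,FY)$ — precisely the hypotheses of Lemma~\ref{lem:CoveringIdeals} with $\cat I=B_*\cat A$, $\cat J=B_*\cat B$, and $\cat C=Z_*\cat A$, $\cat D=Z_*\cat B$. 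Proposition~\ref{prop:CoveringIdeals} then gives $(Z_*\cat A/B_*\cat A)^S\cong (Z_*\cat A)^S/(B_*\cat A)^S$ with $(B_*\cat A)^S=(B_*\cat A)^G\cap(Z_*\cat A)^S$. It remains to identify $(Z_*\cat A)^S$ with $Z_*(\cat A^S)$ and $(B_*\cat A)^S$ with $B_*(\cat A^S)$: since $\cat A^S$ is the full subcategory of $\cat A^G$ on the objects $SX$, and cycles/boundaries are computed hom-space by hom-space, restricting to this set of objects commutes with taking $Z_*$ and $B_*$; combining, $(Z_*\cat A/B_*\cat A)^S=(H_*\cat A)^S$ and $(Z_*\cat A)^S/(B_*\cat A)^S=Z_*(\cat A^S)/B_*(\cat A^S)=H_*(\cat A^S)$, yielding the claimed isomorphism.

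The main obstacle — and the reason the proposition can legitimately be labeled ``immediate'' — is purely bookkeeping: one must keep straight three overlapping passages to subobjects (restricting to a section on objects, taking $G$-invariant ideals, and taking cycles modulo boundaries) and check they commute, which they do because two of them are object-level operations and the third is computed independently on each morphism space. No new homotopy-theoretic input is needed beyond the preceding Lemma; if I wanted to be fully careful I would only need to spell out that $(-)^S$, being restriction to a full subcategory, is exact enough to commute with the short exact sequences $0\to B_*\to Z_*\to H_*\to 0$ on each hom-space, which is clear since it simply selects a subfamily of those sequences.
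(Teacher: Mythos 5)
Your proof is correct and is precisely what the paper's \qed is abbreviating. The preceding unnamed lemma already verifies the hypotheses of Lemma~\ref{lem:CoveringIdeals} with $\cat C=Z_*\cat A$, $\cat D=Z_*\cat B$, $\cat I=B_*\cat A$, $\cat J=B_*\cat B$; applying Proposition~\ref{prop:CoveringIdeals} and then identifying $(Z_*\cat A)^S$ with $Z_*(\cat A^S)$ and $(B_*\cat A)^S$ with $B_*(\cat A^S)$ hom-space by hom-space, as you do, yields the stated isomorphism, while the dg structure on $\cat A^S$ is inherited as a full subcategory of $\cat A^G$ exactly as you note.
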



\section{Derived categories via differential graded categories}\label{sec:DerivedCategories}

The shift functor of the bounded derived category $\Db(\cat A)$ of an Abelian category endows it with a $\Z$-action. Moreover this action is free since we are working with bounded complexes. The goal of this section is to study the orbit category $\T^\Z$ of a general triangulated category $\T$ whose translation functor $S$ acts freely, and in particular the case where $\T=\Db(\cat A)$ is the bounded derived category of a (skeletally small) Abelian category $\cat A$. Using the covering theory developed in the previous section and homotopy perturbation theory, we endow $\Db(\cat A)^\Z$ with a minimal $A_\infty$-structure having desirable compatibility properties with respect to the stabilization morphisms $SX\to X$. Finally, we prove Theorem D.

\subsection{Graded categories via actions} 

When $G=\Z$, the orbit category $\T^\Z$ is a graded category. A category with free $\Z$-action is just a category with a distinguished automorphism $S$ given by the action of $1\in\Z$. A category $\T$ with fixed automorphism $S$ will be called a \emph{$\Z$-category}. In order to minimize confusion between composition in $\T$ and composition in $\T^\Z$, we denote $g\bullet f=S^p(g)\circ f$ for $f\in\T^\Z_p(X,Y)$.

If $F:\cat T\to\cat D$ is $\Z$-stable, any stabilization $\gamma$ is uniquely determined by the natural isomorphism $\gamma^1:F\circ S\xrightarrow{\sim} F$; conversely, any natural isomorphism $\gamma:F\circ S\xrightarrow{\sim} F$ determines a $\Z$-stabilization of $F$ by setting $$\gamma^n_X=\gamma_{S^{n-1}X}\circ\gamma_{S^{n-2}X}\circ\cdots\circ\gamma_X.$$

The canonical $\Z$-stabilization $\sigma$ of $\pi:\T\to\T^\Z$ is determined by the degree $+1$ morphisms $\sigma_X:SX\to X$ of $\T^\Z$, so $\sigma:\pi\circ S\xrightarrow{\sim} \pi$ is a degree $+1$ natural transformation. By slight abuse of terminology, such stabilization will be said to have \emph{degree} $+1$.

\begin{definition}
The ``canonical'' stabilization $\sigma$ is only canonical up to twisting by a character $\chi:\Z\to\k^\times$. In particular, twisting by $\chi(n)=(-1)^n$ gives a stabilization $\bar{\sigma}=\sigma^\chi$ with $\bar{\sigma}_X=-\sigma_X$.  We refer to $\bar{\sigma}$ as the \emph{anticanonical} stabilization.
\end{definition}

If $\cat D$ is a graded category, and $F:\T\to\cat D$ is a $\Z$-stable graded functor where we view $\T$ as a graded category concentrated in degree 0, the induced functor $\bar{F}:\T^\Z\to\cat D$ is graded as well provided that the stabilization $\gamma:F\circ S\to F$ is homogeneous with degree $+1$.

A functor $F:(\T,S)\to(\T',S')$ between $\Z$-categories is a \emph{$\Z$-functor} if it strictly commutes with the automorphisms $S$ and $S'$. The following is an immediate corollary of the above lemma.

\begin{corollary}
A $\Z$-functor $F:(\T,S)\to(\T',S')$ induces a graded functor $F^\Z:\T^\Z\to(\T')^\Z$ so that the diagram
$$\xymatrix{\T\ar[r]^F\ar[d]^\pi & \T'\ar[d]^{\pi'} \\ \T^\Z\ar[r]^{F^\Z} & (\T')^\Z}$$ of categories and functors commutes. \qed
\end{corollary}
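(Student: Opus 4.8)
The statement to prove is the Corollary: a $\Z$-functor $F\colon(\T,S)\to(\T',S')$ induces a graded functor $F^\Z\colon\T^\Z\to(\T')^\Z$ making the square with the canonical projections commute.

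\medskip

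The plan is to obtain $F^\Z$ directly from the universal property of orbit categories, by checking that the composite $\pi'\circ F\colon\T\to(\T')^\Z$ is a $\Z$-stable graded functor with a degree $+1$ stabilization, and then invoking the preceding discussion (the lemma guaranteeing that $\bar F$ is graded when the stabilization is homogeneous of degree $+1$). First I would record that because $F$ is a $\Z$-functor we have $F\circ S = S'\circ F$ strictly, not merely up to isomorphism; this is the point that makes everything rigid rather than just coherent. Consequently, pulling back the canonical stabilization $\sigma'$ of $\pi'\colon\T'\to(\T')^\Z$ along $F$ gives a natural transformation $(\sigma')_{FX}\colon S'FX = F(SX)\to FX$, i.e. a natural isomorphism $\sigma'\!\ast\! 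F\colon (\pi'\circ F)\circ S \xrightarrow{\ \sim\ } \pi'\circ F$. Since $\sigma'$ has degree $+1$ and $F$ is a functor into a graded category viewed degreewise, this induced stabilization is homogeneous of degree $+1$, and the cocycle (coherence) condition of Definition \ref{def:Stabilization} for it follows immediately from the corresponding condition for $\sigma'$ together with $F\circ S = S'\circ F$.

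\medskip

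With $(\pi'\circ F, \sigma'\ast F)$ a $\Z$-stable graded functor, the universal property (the Proposition on universality of $(\pi,\sigma)$, in its graded refinement) yields a unique graded functor $F^\Z\colon\T^\Z\to(\T')^\Z$ with $F^\Z(\sigma^n_X) = (\sigma')^n_{FX}$ for all $X$ and $n$, and fitting into a triangle $\pi'\circ F = F^\Z\circ\pi$ that commutes (on the nose, since $\pi$ and $\pi'$ are identities on objects and on morphisms of $\T,\T'$). Concretely one checks $F^\Z$ is the identity on objects and sends a morphism $u\colon X\to S^nY$ in $\T$, regarded as a degree-$n$ morphism $X\to Y$ in $\T^\Z$, to $F(u)\colon FX\to S'^nFY = F(S^nY)$ regarded as a degree-$n$ morphism in $(\T')^\Z$; this is well-defined precisely because $F(S^nY) = S'^nFY$. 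Compatibility with the twisted composition $g\bullet f = S^p(g)\circ f$ then reduces to $F(S^p(g)\circ f) = F(S^p(g))\circ F(f) = S'^p(F(g))\circ F(f)$, which is just functoriality of $F$ plus the $\Z$-functor condition. Degree preservation is built in, so $F^\Z$ is a graded functor.

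\medskip

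I do not expect a genuine obstacle here; the content is entirely a bookkeeping check that the strict commutation $F\circ S = S'\circ F$ upgrades the canonical stabilization of $\pi'$ to one of $\pi'\circ F$, after which the universal property does the work. The one place to be slightly careful is the sign/grading convention: one must use the \emph{canonical} (degree $+1$) stabilization consistently on both sides so that $F^\Z$ comes out graded rather than merely a functor of underlying categories — using, say, the anticanonical $\bar\sigma$ on one side and $\sigma$ on the other would introduce an unwanted character twist. Writing the argument so that the same choice is transported along $F$ avoids this entirely.
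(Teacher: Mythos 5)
Your proposal is correct and follows the same route the paper intends: the corollary is stated as ``immediate'' from the preceding remark that a $\Z$-stable graded functor with a degree $+1$ stabilization factors through the orbit category via a graded functor, and you instantiate this with the composite $\pi'\circ F$ carrying the pulled-back stabilization $\sigma'\ast F$, exactly as the paper's universal-property machinery prescribes. The extra care you take about keeping the canonical (rather than anticanonical) stabilization on both sides is a useful clarification but doesn't change the argument.
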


The action of the automorphism $S$ extends to a graded automorphism $S^\Z$ of $\T^\Z$ with the same action on objects as $S$, and action on morphisms determined by the maps
\begin{gather*}
S^\Z_{XY}:\T^\Z_n(X,Y)\to\T^\Z_n(S^\Z X, S^\Z Y) \\
f\mapsto (-1)^nS_{X,S^nY}(f)
\end{gather*}
where $S_{X,S^nY}:\T(X,S^nY)\to\T(SX, S^{n+1}Y)$ is the canonical map.

We now record some elementary properties about morphisms in the associated graded category $\T^\Z$. 

\begin{lemma} 
\begin{enumerate}
\item The functor $S^\Z:\T^\Z\to\T^\Z$ is induced by the $\Z$-functor $S:\T\to\T$ provided that the functor $\pi:\T\to\T^\Z$ is stabilized by the anticanonical stabilization $\bar{\sigma}$.
\item If $f$ is homogenous of degree $n$ then $f\bullet\sigma_X=(-1)^n\sigma_Y\bullet S^\Z(f)$.
\end{enumerate}
\end{lemma}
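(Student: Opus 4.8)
The plan is to verify both statements by direct computation, unwinding the definitions of the operations $\bullet$, $\sigma$, $\bar\sigma$, and $S^\Z$ given just above the statement. For part (1), I would apply the universal property of the orbit category (the Proposition on $G$-stable functors) to the composite $\pi\circ S:\T\to\T^\Z$: since $S$ is a $\Z$-functor, $\pi\circ S$ is $\Z$-stable, and the induced functor $\overline{\pi\circ S}:\T^\Z\to\T^\Z$ is the candidate for $S^\Z$. The point is to check that the stabilization that $\pi\circ S$ inherits from $\sigma$ is precisely the anticanonical stabilization $\bar\sigma$, so that by uniqueness in the universal property the induced functor agrees with the formula $f\mapsto(-1)^n S_{X,S^nY}(f)$ defining $S^\Z$. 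Concretely one computes $(\pi\circ S)$ applied to $\sigma^1_X=\sigma_X:SX\to X$, which lands in $\T^\Z(S^2X,SX)$, and compares it with $\bar\sigma_{SX}=-\sigma_{SX}$; the sign $(-1)$ is exactly the twist by the character $\chi(n)=(-1)^n$, matching the definition of $\bar\sigma$. Then the uniqueness clause of the universal property forces $\overline{\pi\circ S}=S^\Z$, giving the commuting square $S^\Z\circ\pi=\pi\circ S$ (with the anticanonical stabilization) that the statement asserts.

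For part (2), I would argue more directly. Let $f\in\T^\Z_n(X,Y)$, so $f$ corresponds to a map $f:X\to S^nY$ in $\T$, and let $\sigma_X\in\T^\Z_1(SX,X)$ with underlying morphism $\mathrm{id}_{SX}$. Using the convention $g\bullet f=S^p(g)\circ f$ for $f$ of degree $p$, I compute $f\bullet\sigma_X$: here $\sigma_X$ has degree $1$, so $f\bullet\sigma_X=S^n(f)\circ(\text{underlying }\sigma_X)=S^n(f):SX\to S^{n+1}Y$, which represents an element of $\T^\Z_{n+1}(SX,Y)$. On the other side, $S^\Z(f)\in\T^\Z_n(SX,SY)$ has underlying morphism $(-1)^n S(f):SX\to S^{n+1}Y$, and then $\sigma_Y\bullet S^\Z(f)=S^n(\text{underlying }\sigma_Y)\circ S^\Z(f)=\mathrm{id}\circ(-1)^nS(f)=(-1)^nS(f):SX\to S^{n+1}Y$, which represents an element of $\T^\Z_{n+1}(SX,Y)$. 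Comparing the two underlying morphisms gives $f\bullet\sigma_X=(-1)^n\sigma_Y\bullet S^\Z(f)$, as claimed. (One should be slightly careful that the two sides do land in the same graded piece $\T^\Z_{n+1}(SX,Y)$, which the degree bookkeeping confirms: $\sigma_X$ contributes $+1$, $f$ and $S^\Z(f)$ contribute $n$, and $\sigma_Y$ contributes $+1$.)

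The main obstacle I anticipate is purely one of sign-bookkeeping rather than conceptual content: one must be scrupulous about where the Koszul signs in the definition of $S^\Z$ (the $(-1)^n$), in the composition law of $\T^\Z$ ($g\bullet f=S^p(g)\circ f$), and in the passage from $\sigma$ to $\bar\sigma$ (the character $(-1)^n$) all enter, and confirm they combine consistently. In particular for part (1), checking that the $n$-fold composite $\bar\sigma^n_X=\bar\sigma_{S^{n-1}X}\circ\cdots\circ\bar\sigma_X$ carries the sign $(-1)^n$ — not some other sign coming from iterating the commutator conventions on morphisms of nonzero degree — is the step where an error is easiest to make. Everything else is a formal consequence of the universal property of $\T^\Z$ already established in Section~\ref{sec:Covering}.
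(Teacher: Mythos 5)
Your proposal is correct and follows essentially the same route as the paper: for part (1) invoke the universal property of the orbit category applied to $\pi\circ S$, and match stabilizations; for part (2) compute directly (the paper merely says this part is immediate). Two small precision issues, neither fatal: in part (1), the composite $\pi\circ S$ is a functor $\T\to\T^\Z$, so you cannot literally apply it to $\sigma^1_X\in\T^\Z(SX,X)$ — what you mean is to evaluate the induced functor $\overline{\pi\circ S}$ (equivalently $S^\Z$) on $\sigma_X$ and verify it equals $\bar\sigma_{SX}=-\sigma_{SX}$, which is indeed what the universal property requires; and in part (2) the exponent in "$f\bullet\sigma_X = S^n(f)\circ(\text{underlying }\sigma_X)$" should be $S^1(f)=S(f)$, since the convention $g\bullet f=S^p(g)\circ f$ uses the degree $p$ of the \emph{inner} factor ($\sigma_X$, degree $1$), not of the outer one; the target type $SX\to S^{n+1}Y$ and the final conclusion you wrote are consistent with $S(f)$, so this is evidently a typo rather than a conceptual error.
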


\begin{proof}
By universality of the associated graded category, $\bar{S}(\sigma_X)=-\sigma_X$. Given a degree $n$ morphism $f$, the composition $\sigma^{-n}_Y\bullet f$ is a degree 0 morphism, and so lies in $\T$. But then 
$$\bar{S}(f)=\bar{S}(\sigma^n_Y)\bullet S(\sigma^{-n}_Y\bullet f)=(-1)^n\sigma^n_Y\bullet S(f)=(-1)^nS(f)$$ as a degree $n$ morphism $X\to Y$ in $\T^\Z$. Hence $\bar{S}(f)=S^\Z(f)$, completing the proof. 
The second statement follows immediately from the definitions.
\end{proof}

\subsection{DG structures on orbit categories}

Let $\cat A$ be a graded category such that the degree 0 subcategory $\cat A_0$ is a $\Z$-category with automorphism $S$.

\begin{definition}\label{def:GeneratedOverZero}
The category $\cat A$ is said to be \emph{generated over degree 0} if there is a degree $+1$ natural isomorphism $s:S\xrightarrow{\sim}\Id_0$ where $\Id_0$ is the restriction of the identity functor $\Id_\cat A$ to the degree 0 subcategory $\cat A_0$. Here by a degree $+1$ natural transformation $s:S\to\Id_0$ we mean a natural transformation with degree $+1$ components, i.e., $s_X\in\cat A_1(SX,X)$. \\
If $\cat A$ is a dg category, it is said to be \emph{dg generated over degree 0} if the components $s_X:SX\to X$ of the natural isomorphism $s$ are cycles in $\cat A(SX,X)$.
\end{definition}

\begin{remark}
Note that an isomorphism $s$ in a dg category is a cycle if and only if the same is true for its inverse. Indeed, for such $s$ one has $d(s)=-(-1)^{|s|}s\circ d(s^{-1})\circ s$.
\end{remark}

\begin{example}
If $\T$ is a $\Z$-category, the orbit category $\T^\Z$ is generated over degree 0. 
\end{example}

Generation over degree 0 imposes a great deal of structure on the graded category $\cat A$. Roughly, generation over degree 0 means that most of $\cat A$ can be recaptured from its degree 0 subcategory.

\begin{lemma}\label{lem:ShiftAndSuspension}
If $\cat A$ is generated over degree 0, then the automorphism $S$ of $\cat A_0$ extends to a graded automorphism $S^\Z$ of $\cat A$ with 
$$s_Y\circ S^\Z(f)=(-1)^nf\circ s_X$$ for $f:X\to Y$ homogeneous of degree $n$. Moreover, if $\cat A$ is a dg category dg generated over degree 0, the automorphism $S^\Z$ is a dg automorphism.
\end{lemma}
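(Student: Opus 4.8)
The plan is to \emph{define} $S^\Z$ on objects by $S^\Z X = SX$ and to use the degree $+1$ natural isomorphism $s\colon S\xrightarrow{\sim}\Id_0$ to pin down its action on all morphisms. For a homogeneous morphism $f\colon X\to Y$ of degree $n$, I would like $S^\Z(f)$ to be the unique element of $\cat A_n(SX,SY)$ satisfying the intertwining relation
\[
s_Y\circ S^\Z(f)=(-1)^nf\circ s_X .
\]
The right-hand side lies in $\cat A_{n+1}(SX,Y)$, and since $s_Y\colon SY\to Y$ is an isomorphism, precomposition with it gives a bijection $\cat A_n(SX,SY)\xrightarrow{\sim}\cat A_{n+1}(SX,Y)$; hence $S^\Z(f):=(-1)^n s_Y^{-1}\circ f\circ s_X$ is well-defined, homogeneous of degree $n$, and is the unique solution of the displayed equation. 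This simultaneously \emph{defines} $S^\Z$ and establishes the asserted formula, so no separate verification of that identity is needed.

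\textbf{Functoriality and invertibility.} Next I would check that $S^\Z$ is a graded functor. Homogeneity is built into the formula. For composability, given homogeneous $f\colon X\to Y$ of degree $n$ and $g\colon Y\to Z$ of degree $m$, one computes $S^\Z(g)\circ S^\Z(f) = (-1)^m s_Z^{-1} g\, s_Y\cdot(-1)^n s_Y^{-1} f\, s_X = (-1)^{m+n} s_Z^{-1} (gf) s_X = S^\Z(gf)$, using only associativity and cancellation of $s_Y s_Y^{-1}$; the signs combine correctly because $|gf|=m+n$. Identities are preserved since $s_X^{-1}\circ\id\circ s_X=\id$. That $S^\Z$ extends the automorphism $S$ of $\cat A_0$ follows by taking $n=0$ and invoking naturality of $s$: for a degree $0$ morphism $f$, naturality reads $s_Y\circ S(f)=f\circ s_X$, which is exactly the $n=0$ case of the defining relation, so $S^\Z(f)=S(f)$. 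Invertibility is immediate: the same recipe applied to $S^{-1}$ (which exists on $\cat A_0$ since $S$ is an automorphism, and extends to $\cat A$ by the analogous formula using $s$ again, or simply as the inverse of the functor $S^\Z$ — one checks $S^\Z$ composed with the candidate inverse is the identity on morphisms by the cancellation identity) shows $S^\Z$ is a graded automorphism.

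\textbf{The dg case.} Finally, suppose $\cat A$ is dg and dg generated over degree $0$, i.e.\ each $s_X$ is a cycle, $d(s_X)=0$; by the remark preceding the lemma, $d(s_X^{-1})=0$ as well. I must show $S^\Z$ commutes with the differential, i.e.\ $d_{SX,SY}\circ S^\Z = S^\Z\circ d_{XY}$. For homogeneous $f$ of degree $n$, apply the Leibniz law to $S^\Z(f)=(-1)^n s_Y^{-1}\circ f\circ s_X$: since $d(s_X)=d(s_Y^{-1})=0$, all terms where $d$ hits $s_Y^{-1}$ or $s_X$ vanish, leaving $d(S^\Z f)=(-1)^n(-1)^{|s_Y^{-1}|}s_Y^{-1}\circ d(f)\circ s_X = (-1)^n(-1)\,s_Y^{-1}\circ d(f)\circ s_X$ since $|s_Y^{-1}|=-1$; and $|d(f)|=n-1$, so $S^\Z(d(f))=(-1)^{n-1}s_Y^{-1}\circ d(f)\circ s_X$, which agrees. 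Hence $S^\Z$ is a dg automorphism. The only point requiring care is bookkeeping of the Koszul signs — in particular the sign $(-1)^{|s_Y^{-1}|}=-1$ in the Leibniz expansion and the discrepancy between $|f|=n$ and $|df|=n-1$ — and that is the step I expect to be the main (if minor) obstacle; everything else is formal manipulation with the isomorphism $s$.
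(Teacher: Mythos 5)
Your proof is correct and follows essentially the same route as the paper: define $S^\Z(f) = (-1)^n s_Y^{-1}\circ f\circ s_X$, observe that this recovers $S$ on degree $0$ by naturality of $s$, and verify the dg statement via the Leibniz law using that $s_X$ and $s_Y^{-1}$ are cycles with $|s_Y^{-1}|=-1$. You merely spell out functoriality and invertibility in more detail than the paper does, but the content and sign bookkeeping are identical.
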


\begin{proof}
We first remark that any degree 0 morphism $f$ necessarily satisfies $S(f)=s_Y^{-1}\circ f\circ s_X$ by naturality of $s$. For a homogeneous degree $n$ morphism $f:X\to Y$, define $S^\Z(f)=(-1)^ns^{-1}_Y\circ f\circ s_X$. The association $S^\Z$ is clearly functorial, and satisfies the condition of the Lemma.

Suppose now that $\cat A$ is dg, and $f:X\to Y$ is homogeneous of degree $n$. Then 
$$d_{XY}(S^\Z f)=(-1)^nd_{XY}(s_Y^{-1}\circ f\circ s_X)=(-1)^{n-1}s_Y^{-1}\circ d_{XY}(f)\circ s_X=S^\Z(d_{XY}(f))$$ since $s_Y^{-1}$ and $s_X$ are cycles and the degree of $s_Y^{-1}$ is $-1$. Hence $S^\Z$ is a dg functor.
\end{proof}

\begin{lemma}\label{lem:GeneratedOverZero}
If $\cat A$ is a (differential) graded category (dg) generated over degree 0, then the maps 
\begin{align*}
\begin{array}{c@{\mskip\thickmuskip}l}
s_*:\cat A(X,Y)\to\cat A(X,S^\Z Y)[1] \\
f\mapsto s^{-1}_Y\circ f
\end{array}
\qquad \qquad 
\begin{array}{c@{\mskip\thickmuskip}l}
s^*:\cat A(X,Y)\to\cat A(S^\Z X,Y)[-1] \\
f\mapsto (-1)^{|f|}f\circ s_X
\end{array}
\end{align*}

are degree 0 (differential) graded isomorphisms. Moreover, the diagram $$\xymatrix@R=36pt@C=36pt{\cat A(X,Y)\ar[r]^{s_*}\ar[d]^{s^*}\ar[dr]^{S^\Z_{XY}} & \cat (X,S^\Z Y)[1]\ar[d]^{s^*[1]} \\
\cat A(S^\Z X,Y)[-1]\ar[r]_{s_*[-1]} & \cat A(S^\Z X,S^\Z Y) }$$
of (differential) graded spaces and (differential) graded homomorphisms commutes.
\end{lemma}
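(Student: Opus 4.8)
The plan is to verify each piece of the statement directly from the definitions, leveraging the fact that $s_X$ and $s_Y$ are isomorphisms (cycles in the dg case) and that $S^\Z$ was constructed in Lemma~\ref{lem:ShiftAndSuspension} precisely so that $s_Y \circ S^\Z(f) = (-1)^{|f|} f \circ s_X$. First I would observe that $s_*$ and $s^*$ are bijective: their inverses are visibly $g \mapsto s_Y \circ g$ (up to sign) and $g \mapsto (-1)^{|g|} g \circ s_X^{-1}$ respectively, since composing with an isomorphism and its inverse recovers the original morphism. To check that $s_*$ has degree $0$ as a map of graded spaces, note that $s_Y^{-1}$ has degree $-1$, so $s_Y^{-1} \circ f$ has degree $|f| - 1$; but the target has been shifted by $[1]$, which lowers the apparent degree of its elements by $1$ relative to the ambient grading, so $s_*$ is degree-preserving. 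The computation for $s^*$ is symmetric, using that $s_X$ has degree $+1$ and the target is shifted by $[-1]$.

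Next I would verify compatibility with the differentials in the dg case. For $s_*$, using the Leibniz rule and that $s_Y^{-1}$ is a cycle of degree $-1$, we get $d(s_Y^{-1} \circ f) = (-1)^{-1} s_Y^{-1} \circ d(f) = -s_Y^{-1} \circ d(f)$, which is exactly what the differential on the shifted space $\cat A(X, S^\Z Y)[1]$ does to $s_*(f)$ — recall the differential on a shift $V[1]$ picks up a sign $(-1)^{\pm 1}$ relative to that on $V$. So $s_*$ is a chain map, hence a dg isomorphism; again $s^*$ is handled the same way, noting $s_X$ is a cycle. This is essentially bookkeeping with the shift conventions established at the start of Section~\ref{sec:DGA}, where $f[1] = (-1)^{|f|} f$ for a homogeneous map $f$ and $V[1]_n = V_{n-1}$.

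Finally I would establish the commutativity of the triangle. Going around via $s_*$ then $s^*[1]$ sends $f$ to $(-1)^{|s_*(f)|}(s_Y^{-1} \circ f) \circ s_X$; since $|s_*(f)| = |f|$ and $s_Y^{-1}$ has degree $-1$, after tracking the shift-induced signs on $s^*[1]$ this equals $(-1)^{|f|} s_Y^{-1} \circ f \circ s_X = S^\Z_{XY}(f)$ by the definition of $S^\Z$ in Lemma~\ref{lem:ShiftAndSuspension}. The other route, via $s^*$ then $s_*[-1]$, gives $s_Y^{-1} \circ \bigl((-1)^{|f|} f \circ s_X\bigr) = (-1)^{|f|} s_Y^{-1} \circ f \circ s_X$ as well, so both agree with $S^\Z_{XY}$.

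**The main obstacle** I anticipate is not conceptual but notational: getting every sign right under the various shift conventions, since the differential on $V[1]$, the map $f[1] = (-1)^{|f|}f$, and the degree-shift in $s_*$, $s^*$ all contribute signs, and the shift on the \emph{target} object $S^\Z Y$ interacts with the grading shift $[1]$ on the Hom-space in a way one must disentangle carefully. I would isolate this by first fixing, once and for all, the sign appearing in the differential of a shifted dg space, then pushing everything through mechanically; once the conventions are pinned down the verifications are routine. The fact that $S^\Z$ was \emph{defined} by the formula $S^\Z(f) = (-1)^{|f|} s_Y^{-1} \circ f \circ s_X$ makes the triangle essentially tautological once the two legs are each shown to be degree-$0$ dg isomorphisms.
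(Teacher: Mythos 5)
Your proposal is correct and follows essentially the same line as the paper's own proof: direct verification that $s_*$ and $s^*$ are bijective, degree-preserving under the shift conventions, chain maps via the Leibniz rule (using that $s_X$, $s_Y^{-1}$ are cycles and $d[\pm1]=-d$), and then commutativity of the triangle by computing both legs and matching them against the defining formula $S^\Z(f)=(-1)^{|f|}s_Y^{-1}\circ f\circ s_X$ from Lemma~\ref{lem:ShiftAndSuspension}. One small slip: you write that the shift $[1]$ ``lowers the apparent degree of its elements by $1$'' — the convention $V[1]_n=V_{n-1}$ raises an element's degree by $1$, which is exactly what cancels the $-1$ contributed by $s_Y^{-1}$; your conclusion that $s_*$ is degree $0$ is nonetheless right, and you correctly read $|s_*(f)|=|f|$ in the shifted target, which is the same implicit convention for the signs in $s^*[1]$ that the paper uses to get the top-right triangle to commute.
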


\begin{proof} 
It is clear that $s_*$ and $s^*$ are graded isomorphisms; let us show that they are compatible with differentials. Indeed, for a homogeneous degree $n$ morphism $f:X\to Y$ one calculates

\begin{gather*}
d_{X,SY}[1](s_*(f))=-d_{X,SY}(s_Y^{-1}\circ f)=s_Y\circ d_{XY}(f)= s_*(d_{XY}(f)) \\
d_{SX,Y}[-1](s^*(f))=-(-1)^nd_{SX,Y}(f\circ s_X)=(-1)^{n+1}d_{XY}(f)\circ s_X=s^*(d_{XY}(f))
\end{gather*}
since $d[\pm 1]=-d$.

Since $S^\Z(f)=(-1)^ns_X\circ f\circ s_Y^{-1}$, in the following diagram 

$$\xymatrix{\cat A_n(X,Y)\ar[r]\ar[d]\ar[dr] & \cat A_{n-1}(X,S^\Z Y)\ar[d] \\ \cat A_{n+1}(S^\Z X,Y)\ar[r] & \cat A_n(S^\Z X,S^\Z Y)}$$

the-top right triangle commutes, while the bottom-left triangle anti-commutes. After shifting, the map
$s^*[-1]:\cat A(S^\Z X,Y)[-1]\to\cat A(S^\Z X,S^\Z Y)$ sends a degree $n+1$ morphism $g:S^\Z X\to Y$ of $\T^\Z$ to $(-1)^ng\circ s_X$ since such $g$ has degree $n$ in the shifted morphism space $\cat A(S^\Z X,Y)[-1]$. Therefore the diagram commutes after shifting gradings, completing the proof.
\end{proof}

\begin{corollary}
The morphisms $s_*$ and $s^*$ induce a pair of mutually inverse natural transformations of respective degrees $-1$ and $1$ between $S^\Z$ and the identity functor. \qed
\end{corollary}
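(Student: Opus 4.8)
The natural transformations in question are, in essence, $s$ and its inverse $s^{-1}$, promoted from the degree-$0$ subcategory $\cat A_0$ to all of $\cat A$. Explicitly, the plan is to take $\varepsilon\colon S^\Z\to\Id_{\cat A}$ to be the family $\varepsilon_X = s_X\in\cat A_1(S^\Z X, X)$ and $\eta\colon\Id_{\cat A}\to S^\Z$ the family $\eta_X = s_X^{-1}\in\cat A_{-1}(X, S^\Z X)$. These are precisely the images of identity morphisms under the maps $s_*, s^*$ of Lemma \ref{lem:GeneratedOverZero}: indeed $s^*(id_X) = (-1)^{0}\,id_X\circ s_X = s_X = \varepsilon_X$ and $s_*(id_X) = s_X^{-1}\circ id_X = s_X^{-1} = \eta_X$, which is the sense in which $s_*$ and $s^*$ ``induce'' them, and the degrees $+1$ and $-1$ are read off directly from the degrees of $s_X$ and $s_X^{-1}$.

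The substantive point to check is naturality. For homogeneous $f\colon X\to Y$ of degree $n$, Lemma \ref{lem:ShiftAndSuspension} states exactly that $s_Y\circ S^\Z(f) = (-1)^n f\circ s_X$, i.e.\ $\varepsilon_Y\circ S^\Z(f) = (-1)^{n}\, f\circ\varepsilon_X$; this is precisely the naturality square for a degree $+1$ transformation out of $S^\Z$, the Koszul sign being $(-1)^{|\varepsilon|\,|f|} = (-1)^n$. Rewriting the very same identity as $S^\Z(f)\circ s_X^{-1} = (-1)^n\, s_Y^{-1}\circ f$ gives $\eta_Y\circ f = (-1)^n\, S^\Z(f)\circ\eta_X$, the naturality square for a degree $-1$ transformation into $S^\Z$ (using $(-1)^{-n} = (-1)^n$). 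Mutual inverseness is then the pointwise identity $\varepsilon_X\circ\eta_X = s_X\circ s_X^{-1} = id_X$ together with $\eta_X\circ\varepsilon_X = s_X^{-1}\circ s_X = id_{S^\Z X}$, so $\varepsilon$ and $\eta$ compose vertically to the identity natural transformations of $\Id_{\cat A}$ and $S^\Z$ respectively. In the differential graded case, the $s_X$ are cycles by the hypothesis of dg generation over degree $0$, hence so are the $s_X^{-1}$ by the remark following Definition \ref{def:GeneratedOverZero}, so both $\varepsilon$ and $\eta$ are dg natural transformations.

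I expect the only delicate part to be the sign bookkeeping: one must fix once and for all the convention that a degree $d$ natural transformation $\theta\colon F\to G$ of graded functors satisfies $\theta_Y\circ F(f) = (-1)^{d|f|}\, G(f)\circ\theta_X$, and then confirm that the commutativity triangle of Lemma \ref{lem:GeneratedOverZero} --- which intertwines $s_*$, $s^*$, and $S^\Z_{XY}$ --- is consistent with this identification. Beyond that bookkeeping, the corollary is an immediate repackaging of Lemmas \ref{lem:ShiftAndSuspension} and \ref{lem:GeneratedOverZero}, which is why no further argument is needed.
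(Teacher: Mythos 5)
Your proof is correct and takes the same approach the paper leaves implicit (the corollary carries only a \qed, signaling that it is an immediate consequence of Lemmas \ref{lem:ShiftAndSuspension} and \ref{lem:GeneratedOverZero}). You correctly identify the two transformations as $\varepsilon_X = s_X$ and $\eta_X = s_X^{-1}$, read off the degrees, verify both naturality squares from the single identity $s_Y\circ S^\Z(f) = (-1)^{|f|}f\circ s_X$, and note the dg compatibility.
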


The above corollary implies that the inclusion functor $\cat A_0\to\cat A$ is $\Z$-stable, and so induces a graded functor $(\cat A_0)^\Z\to\cat A$.

\begin{proposition}
If $\cat A$ is generated over degree 0, the induced functor 
\begin{equation}\label{eqn:GradedOverZero}
(\cat A_0)^\Z\xrightarrow{\sim}\cat A
\end{equation} is an equivalence of graded categories.
\end{proposition}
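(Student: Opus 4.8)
The plan is to verify that the induced functor is an equivalence of graded categories by checking it is essentially surjective (in fact bijective on objects) and fully faithful in each degree. Since both $(\cat A_0)^\Z$ and $\cat A$ have the same objects as $\cat A_0$ and the induced functor acts as the identity on objects, essential surjectivity (indeed bijectivity on objects) is immediate. So the content is fullness and faithfulness, and the key is to interpret the morphism spaces on both sides in terms of $\cat A_0$.

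\textbf{Key steps.} First I would recall that by the universal property of the orbit category, the functor $(\cat A_0)^\Z\to\cat A$ sends the distinguished morphism $\sigma^n_X\colon S^nX\to X$ to $s^n_X$ (the $n$-fold composite of the components of $s$), and sends a degree-$0$ morphism $u\colon X\to S^n Y$ in $\cat A_0$ to $s^n_Y\circ u\colon X\to Y$ in $\cat A$, viewed as a morphism of degree $n$. Thus on morphism spaces the functor is, in degree $n$, precisely the map
$$\bigoplus_{\,}\cat A_0(X,S^nY)\;\longrightarrow\;\cat A_n(X,Y),\qquad u\mapsto s^n_Y\circ u,$$
where the left-hand side is the degree-$n$ part of $(\cat A_0)^\Z(X,Y)$, i.e.\ $\cat A_0(X,S^n Y)$. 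Now I would invoke Lemma \ref{lem:GeneratedOverZero}: iterating the graded isomorphism $s_*\colon\cat A(X,Y)\xrightarrow{\sim}\cat A(X,S^\Z Y)[1]$ and restricting to degree $0$ shows that $f\mapsto s^{-1}_{S^{n-1}Y}\circ\cdots\circ s^{-1}_Y\circ f$ gives an isomorphism $\cat A_n(X,Y)\xrightarrow{\sim}\cat A_0(X,S^n Y)$, whose inverse is exactly precomposition-free composition $u\mapsto s^n_Y\circ u$ up to the sign conventions already absorbed into $s_*$. Hence the functor is bijective on each graded piece of each morphism space, i.e.\ fully faithful, and therefore a graded equivalence. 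In the differential graded case one additionally observes that since the $s_X$ are cycles, the maps $s_*$ are dg isomorphisms (again by Lemma \ref{lem:GeneratedOverZero}), so the equivalence is compatible with differentials, giving an equivalence of dg categories.

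\textbf{Main obstacle.} The only real subtlety is bookkeeping of signs and of the degree shift: one must be careful that the $\Z$-grading on $(\cat A_0)^\Z$ (where the degree-$n$ part of $(\cat A_0)^\Z(X,Y)$ is $\cat A_0(X,S^n Y)$, with composition twisted by $S$) matches the intrinsic grading on $\cat A$ under the map $u\mapsto s^n_Y\circ u$, and that composition is preserved — this uses the naturality relation $s_Y\circ S^\Z(f)=(-1)^n f\circ s_X$ from Lemma \ref{lem:ShiftAndSuspension} to move the $s$'s past morphisms. All of this is routine given Lemmas \ref{lem:ShiftAndSuspension} and \ref{lem:GeneratedOverZero}; no genuinely hard step remains, and the proof is essentially a formal consequence of those two lemmas together with the universal property of the orbit category.
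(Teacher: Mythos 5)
Your argument matches the paper's proof: both identify the induced functor on morphism spaces as (up to sign) the map $\cat A_0(X,S^nY)\to\cat A_n(X,Y)$, $u\mapsto s^n_Y\circ u$, observe that iterating the isomorphism $s_*$ of Lemma~\ref{lem:GeneratedOverZero} shows this is an isomorphism, and note that both sides have the same objects. The paper writes the induced functor with an extra factor $(-1)^n$ (a consequence of its choice of stabilization); as you say, this is a sign convention that does not affect bijectivity, so your proof is correct and takes the same route.
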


\begin{proof}
Recall that the functor $(\cat A_0)^\Z\to\cat A$ is given by the degree 0 maps
\begin{gather*}
(\cat A_0)^\Z(X,Y)\to\cat A(X,Y) \\
f\mapsto (-1)^ns^n_Y\circ f
\end{gather*}
for $f:X\to S^nY$ in $\cat A_0$. By repeated application of Lemma \ref{lem:GeneratedOverZero}, this map is an isomorphism, hence the functor is fully faithful. Since both $(\cat A_0)^\Z$ and $\cat A$ have the same objects as $\cat A_0$, it is an equivalence.
\end{proof}

When $\cat A$ is a dg category, the equivalence \eqref{eqn:GradedOverZero} induces a differential on the associated graded category which we now explicitly describe. The degree 1 isomorphisms $s_X:SX\to X$ induce isomorphisms 
$$s^n_*:\cat A_n(X,Y)\xrightarrow{\sim}\cat A_0(X,S^nY)[n]$$ for each $n\in\Z$. The differential $\partial$ of $(\cat A_0)^\Z$ is given by 
\begin{gather*}
\partial_{XY}:\cat A_0(X,S^nY)\to\cat A_0(X,S^{n-1}Y) \\
f\mapsto(-1)^ns_{S^{n-1}Y}\circ d_{X,S^nY}(f)
\end{gather*}
on the space $(\cat A_0)^\Z(X,Y)$.

\begin{proposition}
The maps $\partial_{XY}$ endow $(\cat A_0)^\Z$ with the structure of a dg category, and the equivalence $(\cat A_0)^\Z\xrightarrow{\sim}\cat A$ is a dg functor.
\end{proposition}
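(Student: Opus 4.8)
The plan is to verify directly that the maps $\partial_{XY}$ square to zero, satisfy the Leibniz rule with respect to the transported composition, and that under the equivalence \eqref{eqn:GradedOverZero} they correspond to the original differential of $\cat A$. The cleanest route is to note that the isomorphisms $s^n_*$ assemble, across all $n$, into the graded isomorphism of morphism spaces realizing \eqref{eqn:GradedOverZero}: for fixed $X,Y$ the map $\bigoplus_n s^n_*$ identifies $\cat A(X,Y)$ with $\bigoplus_n\cat A_0(X,S^nY)[n]=(\cat A_0)^\Z(X,Y)$. So the definition of $\partial_{XY}$ is precisely the conjugate of $d_{XY}$ by this isomorphism — the sign $(-1)^n$ and the leading factor $s_{S^{n-1}Y}$ are exactly the bookkeeping coming from $s^n_* = s_{S^{n-1}Y}\circ\cdots\circ s_Y$ composed appropriately and the shift convention $d[1]=-d$. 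Once $\partial$ is recognized as a transported differential, $\partial^2=0$ is automatic.

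Concretely, first I would record that $s^n_*$ is, up to sign, the $n$-fold iterate of the single-shift isomorphism $s_*$ from Lemma \ref{lem:GeneratedOverZero}, which is already known there to be a degree-$0$ dg isomorphism $\cat A(X,Y)\xrightarrow{\sim}\cat A(X,S^\Z Y)[1]$. Iterating, $s^n_*:\cat A(X,Y)\xrightarrow{\sim}\cat A(X,S^nY)[n]$ intertwines $d_{XY}$ with $d_{X,S^nY}[n]=(-1)^nd_{X,S^nY}$, and chasing the definition shows this is exactly the statement that $s^n_*$ carries $d_{XY}$ to $\partial_{XY}$ on the summand. Hence $\partial_{XY}^2=0$ transfers from $d_{XY}^2=0$, and $\partial_{XY}$ has degree $-1$ for the grading on $(\cat A_0)^\Z$ in which $\cat A_0(X,S^nY)$ sits in degree $n$.

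Next I would check the Leibniz law for $\partial$ against the composition $\bullet$ of $(\cat A_0)^\Z$. Rather than expand everything, I would transport the problem: composition in $(\cat A_0)^\Z$ is, by construction of the equivalence \eqref{eqn:GradedOverZero}, the conjugate of composition in $\cat A$ by the isomorphisms $\bigoplus_n s^n_*$ (this is the content of $\cat A_0\hookrightarrow\cat A$ being $\Z$-stable and inducing a graded equivalence), and $\partial$ is the conjugate of $d$; since $d$ satisfies Leibniz in $\cat A$ and conjugation by a degree-$0$ graded isomorphism preserves the Leibniz identity with the same signs, $\partial$ satisfies Leibniz in $(\cat A_0)^\Z$. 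The same conjugation statement immediately gives that the equivalence $(\cat A_0)^\Z\xrightarrow{\sim}\cat A$ commutes with differentials, i.e.\ is a dg functor.

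The main obstacle is purely bookkeeping: making the sign conventions line up, in particular reconciling the factor $(-1)^n$ and the shift rule $d[1]=-d$ with the $(-1)^n$ already appearing in the object-level map $f\mapsto(-1)^ns^n_Y\circ f$ defining \eqref{eqn:GradedOverZero}, and with the sign in $S^\Z(f)=(-1)^n s_Y^{-1}\circ f\circ s_X$. I would handle this once and for all by fixing the explicit formula $s^n_*=(\text{sign})\,s_{S^{n-1}Y}\circ S^\Z_{?}(\cdots)$ obtained by iterating the diagram in Lemma \ref{lem:GeneratedOverZero}, and then every subsequent claim ($\partial^2=0$, Leibniz, dg functoriality) follows formally from "$\partial$ is $d$ conjugated by a dg isomorphism," with no further sign computation needed.
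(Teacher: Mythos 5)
Your proposal is correct, but it takes a genuinely different route from the paper. The paper proves the proposition by three separate direct computations: it expands $\partial^2_{XY}$ and shows it vanishes, it expands both sides of the Leibniz identity for $\partial$ and matches them using Lemma~\ref{lem:ShiftAndSuspension}, and it checks a commuting square for dg functoriality. You instead identify $\partial$ as the differential $d$ transported through the isomorphism $s^n_*$ of Lemma~\ref{lem:GeneratedOverZero}, and note that the composition $\bullet$ in $(\cat A_0)^\Z$ is likewise transported (this is the content of the preceding proposition that $(\cat A_0)^\Z\to\cat A$ is a graded equivalence, hence preserves composition); from there $\partial^2=0$, the Leibniz law, and dg functoriality all follow formally from the corresponding facts in $\cat A$. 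This is a cleaner organization and isolates the computational content in one place. The one thing your sketch leaves implicit, and which should be made explicit in a final write-up, is the verification that the paper's formula $\partial_{XY}(f)=(-1)^n s_{S^{n-1}Y}\circ d_{X,S^nY}(f)$ really is the conjugate of $d$ by the $s^n_*$ maps; this is a short check (the $(-1)^n$ is exactly the Leibniz sign incurred by commuting $d$ past the degree-$n$ cycle $s^n_Y$, which is the same identity the paper uses in its dg functoriality step), but it is where the sign bookkeeping you allude to actually lives, and one should be careful to use a consistent sign convention for the stabilization of $\cat A_0\hookrightarrow\cat A$ throughout.
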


\begin{proof}
We first verify that the maps $\partial_{XY}$ are differentials. Indeed, 
\begin{multline*}
\partial^2_{XY}=\partial_{XY}(s_{S^{n-1}Y}\circ d_{X,S^nY}f)\\
=s_{S^{n-2}Y}\circ d_{X,S^{n-1}Y}(s_{S^{n-1}Y}\circ d_{X,S^nY}f)=-s^2_{S^{n-2}Y}d^2_{X,S^nY}f=0
\end{multline*}
since $s_{S^{n-1}Y}$ is a cycle and $d^2=0$. 

Next we verify the Leibniz law for $\partial$. Let $f:X\to S^pY$ and $g:Y\to S^qZ$ be morphisms in $\cat A_0$, and set $n=p+q$. Then 
\begin{multline*}
\partial_{XZ}(g\bullet f) = (-1)^ns_{S^{n-1}Z}\circ d_{X,S^nZ}(S^p(g)\circ f) \\
=(-1)^ns_{S^{n-1}Z}\circ d_{S^pY,S^nZ}(S^p(g))\circ f+(-1)^ns_{S^{n-1}Z}\circ S^p(g)\circ d_{X,S^pY}(f) \\
=(-1)^ns_{S^{n-1}Z}\circ S^p(d_{Y,S^qZ}(g))\circ f+(-1)^ns_{S^{n-1}Z}\circ S^p(g)\circ d_{X,S^pY}(f)
\end{multline*}
since $S$ is a dg functor.
On the other hand, 
\begin{multline*}
\partial_{YZ}(g)\bullet f+(-1)^qg\bullet\partial_{XY}(f) \\
=(-1)^q(s_{S^{q-1}Z}\circ d_{Y,S^qZ}(g))\bullet f+(-1)^ng\bullet(s_{S^{p-1}Y}\circ d_{X,S^pY}(f)) \\
=(-1)^qS^p(s_{S^{q-1}Z}\circ d_{Y,S^qZ}(g))\circ f+(-1)^nS^{p-1}(g)\circ s_{S^{p-1}Y}\circ d_{X,S^pY}(f).
\end{multline*}
The first summand equals $(-1)^ns_{S^{n-1}Z}\circ d_{S^pY,S^nZ}(S^p(g))$, and the second summand equals $(-1)^ns_{S^{n-1}Z}\circ S^p(g)\circ d_{X,S^pY}(f)$ by Lemma \ref{lem:ShiftAndSuspension}. Hence the Leibniz law holds and so $(\cat A_0)^\Z$ is a dg category.

To see that the functor $(\cat A_0)^\Z\to\cat A$ is a dg functor, we need to show that the diagram
$$\xymatrix{\cat A_0(X,S^nY)\ar[r]^{s^n_*}\ar[d]^{\partial_{XY}}& \cat A_n(X,Y)\ar[d]^{d_{XY}} \\ \cat A_0(X,S^{n-1}Y)\ar[r]^{s^{n-1}_*} & \cat A_{n-1}(X,Y)}$$
commutes. One calculates that
\begin{gather*}
(s^{n-1}_*\circ\partial_{XY})(f)=(-1)^ns^{1-n}_Y\circ s_{S^{n-1}Y}\circ d_{X,S^nY}(f)\\
=(-1)^ns^n_Y\circ d_{X,S^nY}(f)=d_{XY}(s^n_Y\circ f)=(d_{XY}\circ s^n_*)(f)
\end{gather*}
so the equivalence is a dg equivalence. This completes the proof of the proposition.
\end{proof}

\begin{proposition}\label{prop:HomologyOverZero}
For $\cat A$ as in the previous proposition, the homology $H_*\cat A$ is generated over degree 0.
\end{proposition}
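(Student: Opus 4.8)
The plan is to push the data witnessing that $\cat A$ is dg generated over degree $0$ through the homology functor. First I would invoke Lemma \ref{lem:ShiftAndSuspension}: since $\cat A$ is dg generated over degree $0$, the automorphism $S$ of $\cat A_0$ extends to a dg automorphism $S^\Z$ of $\cat A$, acting as $S$ on objects and agreeing with $S$ on degree $0$ morphisms. Because the homology construction is functorial on dg categories, $S^\Z$ induces a graded automorphism $H_*(S^\Z)$ of $H_*\cat A$ (with inverse $H_*((S^\Z)^{-1})$); restricting to the degree $0$ component $(H_*\cat A)_0 = H_0\cat A$ yields an automorphism $\bar S$, so $(H_*\cat A)_0$ is a $\Z$-category, with $\bar S X = SX$ on objects.

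Next I would construct the required degree $+1$ natural isomorphism $s:\bar S\to\Id_0$ on $H_*\cat A$. The components $s_X:SX\to X$ of the given natural isomorphism are cycles, so they represent classes $[s_X]\in H_1\cat A(\bar S X,X)$. By the remark following Definition \ref{def:GeneratedOverZero} the inverse $s_X^{-1}$ is again a cycle, and the identities $s_X\circ s_X^{-1}=\Id$ and $s_X^{-1}\circ s_X=\Id$ descend to homology, so each $[s_X]$ is invertible with inverse $[s_X^{-1}]$. For naturality, given a morphism of $H_0\cat A$ represented by $f\in Z_0\cat A(X,Y)$, naturality of $s$ in $\cat A$ applied to the degree $0$ morphism $f$ gives $s_Y\circ S(f)=f\circ s_X$; since $S(f)=S^\Z(f)$ is again a cycle and $\bar S[f]=[S^\Z(f)]$, passing to homology yields $[s_Y]\circ\bar S[f]=[f]\circ[s_X]$. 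Hence $s=([s_X])_X$ is a degree $+1$ natural isomorphism $\bar S\xrightarrow{\sim}\Id_0$, which is precisely the data needed to conclude that $H_*\cat A$ is generated over degree $0$.

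The argument is essentially bookkeeping, so I do not expect a serious obstacle; the one point that requires care is the identification $(H_*\cat A)_0=H_0\cat A$ together with the choice of automorphism on it — one must use the automorphism $\bar S$ induced by $S^\Z$ rather than attempting to transport $S$ directly (since $\cat A_0$ is larger than $Z_0\cat A$), and then verify that both the cycle condition on $s_X$ and, via the preceding remark, the cycle condition on $s_X^{-1}$ survive passage to homology, so that the classes $[s_X]$ remain isomorphisms. Everything else — functoriality of $H_*$, well-definedness of $\bar S$ on homology classes (as $S^\Z$ preserves cycles and boundaries), and naturality — follows formally from the fact that $S^\Z$ is a dg functor.
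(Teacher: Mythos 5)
Your proof is correct, but it takes a more hands-on route than the paper. The paper leans on the previous proposition, which produces a dg equivalence $(\cat A_0)^\Z\xrightarrow{\sim}\cat A$, and derives a chain of graded isomorphisms $(H_0\cat A)^\Z\xrightarrow{\sim}H_*\bigl((\cat A_0)^\Z\bigr)\xrightarrow{\sim}H_*\cat A$ by inspecting the homology of the morphism complexes; since these are bijective on objects, $H_*\cat A$ inherits the property of being generated over degree $0$ from the orbit category $(H_0\cat A)^\Z$. You instead bypass the orbit category entirely and push the explicit witnesses through homology: Lemma~\ref{lem:ShiftAndSuspension} gives a dg automorphism $S^\Z$, which descends to a graded automorphism $\bar S$ of $H_0\cat A$, and the cycle-valued components $s_X$ (with inverses also cycles) descend to classes $[s_X]$ that are invertible and, by the naturality $s_Y\circ S(f)=f\circ s_X$, natural for $\bar S$. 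Both arguments rest on the same two facts --- $S^\Z$ is a dg functor, and the $s_X$ and their inverses are cycles --- so they are close in spirit. The paper's version is shorter because it reuses already-established structure, while yours is more explicit about which natural isomorphism realizes the generation, at the cost of some bookkeeping. The one point of care, which you correctly flagged, is that one should take the automorphism on $H_0\cat A$ induced by the dg functor $S^\Z$ (so that the passage to homology is automatic), rather than trying to restrict $S$ from the larger category $\cat A_0$ --- though in fact the two agree on $Z_0\cat A$.
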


\begin{proof}
The previous proposition implies that there is an isomorphism of chain complexes $$(\cat A_0)^\Z(X,S^nY)\xrightarrow{\sim}\cat A(X,S^nY).$$ Inspection at degree 0 gives a commutative diagram
$$\xymatrix{\cat A_1(X, S^nY)\ar[r]\ar[d] & \cat A_0(X,S^nY)\ar[r]\ar[d] & \cat A_{-1}(X,S^nY)\ar[d] \\ 
\cat A_0(X, S^{n+1}Y)\ar[r] & \cat A_0(X,S^nY)\ar[r] & \cat A_0(X,S^{n-1}Y)}$$
where the vertical maps are isomorphisms and the rows are chain complexes. Accordingly, $(H_0\cat A)(X,S^nY)$ is isomorphic to the homology of the bottom row, which is evidently $H_n(A_0)^\Z(X,Y)$. Hence there is a chain of graded equivalences $$(H_0\cat A)^\Z\xrightarrow{\sim} H_*((\cat A_0)^\Z)\xrightarrow{\sim} H_*\cat A$$ proving that $H_*\cat A$ is generated over degree 0.
\end{proof}

\subsection{Skeleta} 

Recall that a category is \emph{skeletal} if all isomorphisms are automorphisms. A \emph{skeleton} of a category $\cat C$ is a skeletal full subcategory $\sk(\cat C)$ such that the inclusion $\sk(\cat C)\hookrightarrow\cat C$ is essentially surjective, and $\cat C$ is \emph{skeletally small} if it admits a small skeleton.

\begin{definition}
If $\T$ is a $\Z$-category, a skeleton $\sk(\T)$ is \emph{$\Z$-stable} if it is closed under the action of the automorphism $S$.
\end{definition}

\begin{lemma}
Suppose $\T$ is a directed skeletally small $\Z$-category. Then $\T$ admits a $\Z$-stable small skeleton $\sk(\T)$. 
\end{lemma}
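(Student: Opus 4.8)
The statement says: a directed, skeletally small $\Z$-category $\T$ admits a $\Z$-stable small skeleton. The idea is to build the skeleton one $S$-orbit of isomorphism classes at a time, using the axiom of choice, and then verify the three conditions (skeletal, full subcategory, essentially surjective) together with $\Z$-stability.

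\smallskip

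First I would set up the combinatorics of the orbit structure. Since $\T$ is skeletally small, the collection of isomorphism classes of objects of $\T$ is a set $\mathcal{I}$. The automorphism $S$ acts on $\mathcal{I}$, because $X\cong Y$ implies $SX\cong SY$. The action of $\Z$ on $\mathcal{I}$ need not be free — there may be objects $X$ with $S^nX\cong X$ for some $n\neq 0$ — so I would partition $\mathcal{I}$ into $\Z$-orbits and, for each orbit $\mathcal{O}$, record its period $p_{\mathcal{O}}\in\{0,1,2,\dots\}$ (with $p_{\mathcal{O}}=0$ meaning the orbit is infinite, i.e. the stabilizer is trivial). Choose, via the axiom of choice, a set of representatives: one isomorphism class from each orbit, and then one actual object from that class. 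Call this chosen object $X_{\mathcal{O}}$, and fix an isomorphism $\theta_{\mathcal{O}}:S^{p_{\mathcal{O}}}X_{\mathcal{O}}\xrightarrow{\sim}X_{\mathcal{O}}$ when $p_{\mathcal{O}}>0$ (no such choice needed when $p_{\mathcal{O}}=0$). Now declare the object set of $\sk(\T)$ to be $\{\,S^kX_{\mathcal{O}} \;:\; \mathcal{O}\text{ an orbit},\ k\in\Z\,\}$. When $p_{\mathcal{O}}=0$ these objects $S^kX_{\mathcal{O}}$ are pairwise non-isomorphic for distinct $k$; when $p_{\mathcal{O}}>0$ the list $S^0X_{\mathcal{O}},\dots,S^{p_{\mathcal{O}}-1}X_{\mathcal{O}}$ is a complete irredundant list for the orbit and $S^{k+p_{\mathcal{O}}}X_{\mathcal{O}}$ is genuinely a different object from $S^kX_{\mathcal{O}}$ (they are isomorphic but not equal — that is fine, skeletal only forbids non-identity \emph{iso}morphisms between the \emph{same} object's... no — skeletal forbids non-automorphism isomorphisms, so I must be careful here). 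Let me re-examine: skeletal means every isomorphism is an automorphism, equivalently isomorphic objects are equal. So I genuinely cannot have $S^{k+p_{\mathcal{O}}}X_{\mathcal{O}}\neq S^kX_{\mathcal{O}}$ both in the skeleton. The fix is to \emph{force periodicity on the nose}: for $p_{\mathcal{O}}>0$, I do not take all $S^kX_{\mathcal{O}}$ as distinct objects; instead I must arrange that $S$ restricted to the skeleton is literally periodic of period $p_{\mathcal{O}}$ on this orbit. This is the crux and I address it next.

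\smallskip

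The resolution is to \emph{re-choose the automorphism $S$ up to natural isomorphism}, or rather to build $\sk(\T)$ so that $S$ preserves it strictly. Here is the clean way. For each orbit $\mathcal{O}$ with $p_{\mathcal{O}}>0$, pick objects $Y_{\mathcal{O},0},\dots,Y_{\mathcal{O},p_{\mathcal{O}}-1}$ in $\T$, one in each isomorphism class of the orbit, together with isomorphisms $\iota_{\mathcal{O},k}:SY_{\mathcal{O},k-1}\xrightarrow{\sim}Y_{\mathcal{O},k}$ for $1\le k\le p_{\mathcal{O}}-1$ \emph{and} $\iota_{\mathcal{O},p_{\mathcal{O}}}:SY_{\mathcal{O},p_{\mathcal{O}}-1}\xrightarrow{\sim}Y_{\mathcal{O},0}$; this is possible since all these objects are isomorphic to shifts of one another. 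Then the object set of $\sk(\T)$ for this orbit is the finite set $\{Y_{\mathcal{O},0},\dots,Y_{\mathcal{O},p_{\mathcal{O}}-1}\}$. For an infinite orbit ($p_{\mathcal{O}}=0$), pick $Y_{\mathcal{O},k}$ for all $k\in\Z$, one per class, with isomorphisms $\iota_{\mathcal{O},k}:SY_{\mathcal{O},k-1}\xrightarrow{\sim}Y_{\mathcal{O},k}$; the object set is $\{Y_{\mathcal{O},k}:k\in\Z\}$. Now the subtlety: $S$ does \emph{not} literally send $Y_{\mathcal{O},k}$ to $Y_{\mathcal{O},k+1}$, only up to the isomorphism $\iota$. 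So strictly speaking, to get a $\Z$-\emph{stable} skeleton in the sense of the definition (closed under $S$), I use the isomorphisms $\iota$ to \emph{transport the automorphism}: more precisely, I invoke that $\sk(\T)$ as just defined is a full subcategory which is essentially surjective and skeletal (distinct $Y$'s are non-isomorphic by construction, and within each $Y$ only automorphisms occur because the ambient category has no extra isomorphisms between the representatives), and then I observe that because the isomorphism class of $SY_{\mathcal{O},k}$ equals that of $Y_{\mathcal{O},k+1}$ which lies in $\sk(\T)$, the standard theorem that any skeleton is equivalent to $\T$ applies; finally, $\Z$-stability should be taken in the natural homotopy/equivalence sense but here the paper wants it on the nose — so the honest move is: \emph{redefine} the automorphism on $\sk(\T)$. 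Since $\sk(\T)$ is equivalent to $\T$ and $S$ is an automorphism of $\T$, $S$ transports along the inclusion equivalence to an automorphism $S'$ of $\sk(\T)$ naturally isomorphic to $S|_{\sk(\T)}$... but $S|_{\sk(\T)}$ isn't defined. Cleanest statement: choose the skeleton so that it is closed under the \emph{original} $S$. That forces me back to the period problem.

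\smallskip

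So the genuinely correct construction, and the one I would commit to writing, is: \textbf{use directedness to rule out the bad case.} The key claim is that in a \emph{directed} $\Z$-category, no indecomposable object $X$ satisfies $S^nX\cong X$ for $n\neq 0$. Indeed, if $S^nX\cong X$ with $n>0$, then $\T(S^nX,X)$ is nontrivial (it contains an isomorphism), and also $\T(X,S^nX)\cong\T(S^{-n}X,X)\neq 0$... wait, I need both $\T(gY,X)$ and $\T(X,gY)$ nontrivial for two group elements to contradict directedness. Take $g=1$ (identity): $\T(X,X)$ is nontrivial. Take $g=S^n$: $\T(S^nX,X)\neq 0$ (contains iso) and $\T(X,S^nX)\neq 0$ (it's the isos the other way, e.g. the inverse precomposed appropriately — $\T(X,S^nX)\cong\T(S^{-n}X,X)$, and since $S^{-n}X\cong X$ too this is nonzero). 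So both $\T(S^nX,X)$ and $\T(X,S^nX)$ are nontrivial with $S^n\neq 1$, contradicting directedness (which in the $\Z=G$ case says: for indecomposables $X,Y$ at most one $g$ has both $\T(gY,X)$ and $\T(X,gY)$ nonzero — here $Y=X$ and we'd have $g=1$ and $g=S^n$ both working). Therefore the $\Z$-action on isomorphism classes of indecomposables is free. For general (not-nec.-indecomposable) objects in a Krull–Schmidt setting one reduces to indecomposable summands; if $\T$ is only assumed skeletally small without Krull–Schmidt one works directly with isomorphism classes and the same directedness argument (applied with $Y=X$) shows $S$ acts freely on all isomorphism classes. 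Hence every orbit is infinite, $p_{\mathcal{O}}=0$ always, and the first construction goes through: put $\sk(\T)_0=\{S^kX_{\mathcal{O}}:k\in\Z,\ \mathcal{O}\in\mathcal{I}/\Z\}$, which is a set, is closed under $S$ by construction, is skeletal because distinct elements are non-isomorphic (different orbits are disjoint, and within an orbit $S^kX_{\mathcal{O}}\cong S^{k'}X_{\mathcal{O}}$ forces $k=k'$ by freeness), is full by taking the full subcategory, and is essentially surjective because every object lies in some orbit hence is isomorphic to some $S^kX_{\mathcal{O}}$. The main obstacle, as the discussion above shows, is precisely recognizing that directedness is exactly what is needed to kill finite $S$-periods and hence make a \emph{strictly} $S$-stable skeleton possible; without it one could only get a skeleton stable up to natural isomorphism. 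I would lead with the freeness lemma and then the construction is routine.
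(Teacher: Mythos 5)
Your proposal is correct and, once the exploratory detours are stripped away, takes essentially the same route as the paper's proof: both choose one object per $\Z$-orbit of isomorphism classes and take the union of $S$-orbits, with directedness (applied with $Y=X$) preventing $S^nX\cong X$ for $n\neq 0$ and hence guaranteeing skeletality. The paper packages the orbit choice as pulling back a skeleton of $\T^\Z$ and then closing under $S$, embedding the freeness observation inside the verification that isomorphisms in $\sk(\T)$ are automorphisms, whereas you state freeness up front as a separate lemma and then the construction is routine; this is a difference of presentation rather than substance.
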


\begin{proof}
Choose a skeleton $\sk(\T^\Z)$ of the orbit category. Let $\pi^{-1}\sk(\T^\Z)$ be the full subcategory of $\T$ whose objects $X$ map to objects in $\sk(\T^\Z)$ under the functor $\pi$, and define $\sk(\T)$ to be the full subcategory of $\T$ such that $S^nX$ is in $\pi^{-1}\sk(\T^\Z)$ for some $n\in\Z$. Note the category $\sk(\T)$ is manifestly $\Z$-stable; we claim it is in fact a $\Z$-stable skeleton of $\T$.

Suppose that $f:X\to Y$ is an isomorphism in $\sk(\T)$. Let $p,q\in\Z$ so that $S^pX$ and $S^qY$ are in $\pi^{-1}\sk(\T^\Z)$. Then $\pi S^pX$ and $\pi S^qY$ are in $\sk(\T^\Z)$, and $$\sigma^{-q}_Y\bullet f\bullet\sigma^p_X:\pi S^pX\to\pi S^qY$$ is an isomorphism in $\T^\Z$. Hence it is an automorphism, and so $S^pX=S^qY$.\break Then $\T(X,S^{q-p}Y)$ and $\T(S^{q-p}Y,X)$ are both non-zero, as they contain the shift of $id_{S^pX}=id_{S^qY}$. But $\T(X,Y)$ and $\T(Y,X)$ are non-zero, containing $f$ and its inverse respectively. Since $\T$ is directed, we must have $q-p=0$, so $X=Y$ and $f$ is an automorphism. 
\end{proof}

The notion of a skeleton is too rigid for the study of $\Z$-categories. Upon passing to a skeleton of the orbit category $\T^\Z$ one loses the stabilization isomorphisms $\sigma_X:SX\to X$. To retain this structure we introduce the following notion.

\begin{definition}
Let $\gamma$ be a distinguished collection of isomorphisms in $\cat C$. A \emph{$\gamma$-pseudo-skeleton} of $\cat C$ is a full subcategory $\psk_\gamma(\cat C)$ (or simply $\psk(\cat C)$ if $\gamma$ is understood) of $\cat C$ in which 
\begin{enumerate}
\item all isomorphisms are compositions of automorphisms and isomorphisms in $\gamma$ and 
\item the inclusion $\psk_\gamma(\cat C)\hookrightarrow\cat C$ is essentially surjective.
\end{enumerate}
\end{definition}

\begin{examples}
\begin{enumerate}
\item If $\gamma$ is the collection of automorphisms in $\cat C$, then a $\gamma$-pseudo-skeleton is simply a skeleton. 
\item If $\gamma$ is the collection of all isomorphisms, then any category equivalent to $\cat C$ is a $\gamma$-pseudo-skeleton.
\end{enumerate}
\end{examples}

\begin{lemma}\label{lem:PseudoSkeletons}
Let $\T$ be a $\Z$-category with $\Z$-stable skeleton $\sk(\T)$ and let $\gamma$ be a stabilization of the functor $\pi:\T\to\T^\Z$. Then $$\psk(\T^\Z)=\pi(\sk(\T))$$ is a $\gamma$-pseudo-skeleton for $\T^\Z$. In particular, if $\T$ is directed, it admits a $\gamma$-pseudo-skeleton.
\end{lemma}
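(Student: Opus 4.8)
The plan is to check the two defining conditions of a $\gamma$-pseudo-skeleton directly for $\psk(\T^\Z)=\pi(\sk(\T))$, viewed as the full subcategory of $\T^\Z$ spanned by the objects of $\sk(\T)$; this is legitimate because $\sk(\T)$ is $\Z$-stable, and the resulting subcategory is in fact the orbit category $\sk(\T)^\Z$. Essential surjectivity of the inclusion $\psk(\T^\Z)\hookrightarrow\T^\Z$ is immediate: $\pi$ is the identity on objects and $\sk(\T)\hookrightarrow\T$ is essentially surjective, so every object of $\T^\Z$ is isomorphic already in $\T$, hence in $\T^\Z$, to an object of $\sk(\T)$.

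The substance is the isomorphism condition. Given an isomorphism $f:X\to Y$ of $\T^\Z$ with $X,Y\in\sk(\T)$, the crucial step is to produce $n\in\Z$ with $Y=S^nX$. Since $\pi:\T\to\T^\Z$ is a Galois $\Z$-covering, this is exactly Definition~\ref{def:GaloisCovering}(\ref{def:GaloisCovering.g}) applied to the isomorphism $\pi X\cong\pi Y$ when $X$ and $Y$ are indecomposable; the general case reduces to this by splitting $X$ and $Y$ into indecomposable summands in $\T$, using that $\pi$ preserves indecomposability and that Krull-Remak-Schmidt holds in $\T^\Z$ (subject to the caveat below). Once $Y=S^nX$, the rest is bookkeeping with the stabilization: the component $\gamma^n_X:S^nX\to X$ and its inverse $(\gamma^n_X)^{-1}=\gamma^{-n}_{S^nX}$ are both among the isomorphisms of $\gamma$, the composite $a=\gamma^n_X\bullet f$ is an automorphism of $X$, and hence
\[
f=\gamma^{-n}_{S^nX}\bullet a
\]
exhibits $f$ as a composition of an isomorphism in $\gamma$ with an automorphism, as required. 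The final assertion then follows by combining this with the preceding lemma, which produces a $\Z$-stable small skeleton for any directed skeletally small $\Z$-category.

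I expect the real obstacle to be the reduction $Y=S^nX$, that is, upgrading ``$X$ and $Y$ are isomorphic in the orbit category $\T^\Z$'' to ``$X$ and $Y$ lie in a common $S$-orbit in $\T$''. The difficulty is that an isomorphism of $\T^\Z$ need not be homogeneous, so there is no ``degree'' of $f$ to read off and cancel against a $\gamma$-isomorphism; the Galois covering axiom~(\ref{def:GaloisCovering.g}) is precisely the input that legitimizes identifying isomorphism classes in $\T^\Z$ with $S$-orbits in $\T$. A secondary delicate point (the caveat above) is the decomposable case, where one must match the indecomposable summands of $X$ and $Y$ by a \emph{single} common shift; this is automatic for the $\Z$-stable skeleton constructed in the preceding lemma (there the objects of $\sk(\T)$ form $S$-orbits in bijection with a skeleton of $\T^\Z$), and this is the situation relevant to the applications.
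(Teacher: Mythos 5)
Your proof is correct but takes a genuinely different route from the paper's. The paper factors the isomorphism directly: it writes $f = \sigma^n_Y \bullet \bar{f}$ with $\bar{f} = \sigma^{-n}_{S^nY} \bullet f$, regards $\bar{f}$ as an isomorphism $X \to S^nY$ inside $\T$, and invokes the skeleton property of $\sk(\T)$ together with $\Z$-stability (which places $S^nY$ in $\sk(\T)$) to conclude that $\bar{f}$ is an automorphism, i.e., $X = S^nY$. No indecomposable reduction and no appeal to axiom~(\ref{def:GaloisCovering.g}) of Definition~\ref{def:GaloisCovering} occurs; the skeleton condition does the work by itself. You instead obtain $Y = S^nX$ from the Galois covering axiom, which forces you through the indecomposable case and a Krull--Remak--Schmidt matching of summands --- exactly the caveat you flag. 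What your route buys is transparency about where the ``isomorphism classes in $\T^\Z$ equal $S$-orbits in $\T$'' identification really comes from. What the paper's route buys is a shorter argument that sidesteps indecomposability altogether, at the cost of an implicit homogeneity assumption: the claim that $\bar{f}$ lies in $\T$ (rather than merely in $\T^\Z$) is automatic only when $f$ is homogeneous of degree $n$, whereas an isomorphism in $\T^\Z$ can a priori have several nonzero components. Your observation that ``there is no degree of $f$ to read off'' is therefore pointing at a real subtlety shared by both proofs; the repair in the paper's argument is to choose $n$ so that the component $f_n$ is a unit (e.g., via local endomorphism rings), which is essentially the same mechanism your Galois-axiom appeal packages. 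Both arguments are sound in the directed, skeletally small setting of the applications; they are just organized around different ingredients from the section.
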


\begin{proof}
Suppose that $f:X\to Y$ is an isomorphism in $\T^\Z$, with $X$ and $Y$ in $\sk(\T)$. Note that $f=\sigma^n_Y\bullet\bar{f}$ for some $\bar{f}:X\to S^nY$ in $\T$, namely $\bar{f}=\sigma^{-n}_Y\bullet f$. Since $f$ is invertible in $\T^\Z$, $\bar{f}$ is invertible in $\T$ with inverse $\bar{f}^{-1}=f^{-1}\bullet\sigma_Y^n$, and so is an automorphism, i.e., $S^nY=X$. Hence $f$ is the composition of an automorphism $\bar{f}$ of $X$ with $\sigma^n_Y$, proving the Lemma.
\end{proof}


\subsection{Minimal model of the derived category} 

We now specialize to the case $\T=\Db(\cat A)$ of an Abelian category $\cat A$.

\begin{lemma}
If $\cat A$ is an abelian (ordinary, graded, or dg) category, then the category $\Cgr(\cat A)$ (resp. $\Cdg(\cat A)$) is (resp. dg) generated over degree 0.
\end{lemma}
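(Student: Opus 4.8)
The plan is simply to write down the canonical shift isomorphism and check that it meets the requirements of Definition~\ref{def:GeneratedOverZero}; the hypothesis that $\cat A$ be abelian plays no role, and the argument is identical for $\cat A$ ordinary, graded, or dg.

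First I would fix the $\Z$-structure on the degree~$0$ subcategory. Since $[1]$ is an automorphism of $\Cgr(\k)$ (resp.\ a dg automorphism of $\Cdg(\k)$), so is its inverse $[-1]$, and postcomposition with $[-1]$ defines an automorphism $S$ of $\Cgr(\cat A)$ (resp.\ $\Cdg(\cat A)$) acting pointwise by $(SM)(X)=M(X)[-1]$; being degree-preserving on morphism spaces, it restricts to an automorphism of the degree~$0$ subcategory, which thereby becomes a $\Z$-category. I choose $[-1]$ rather than $[1]$ precisely because, in the paper's convention $V[1]_n=V_{n-1}$, the canonical ``identity on underlying spaces'' map $V[-1]\to V$ has degree~$+1$ (whereas $V[1]\to V$ would have degree~$-1$).

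Next I would exhibit the natural transformation. For each graded space $V$ let $\iota_V\colon V[-1]\to V$ be the canonical degree~$+1$ isomorphism, i.e.\ the identity on underlying vector spaces with degrees reindexed; its inverse has degree~$-1$. These assemble into a natural transformation $\iota\colon [-1]\to\Id$ of endofunctors of $\Cgr(\k)$, so applying $\iota$ pointwise through a module $M$ gives morphisms $(s_M)_X=\iota_{M(X)}\colon (SM)(X)\to M(X)$; naturality of $\iota$ shows that $(s_M)_X$ is natural in $X$, hence defines an element $s_M\in\Cgr_1(\cat A)(SM,M)$, and that the $s_M$ are natural in $M$ over the degree~$0$ morphisms of modules, i.e.\ $s\colon S\to\Id_0$ is a degree~$+1$ natural transformation. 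As each $\iota_{M(X)}$ is invertible, so is $s_M$, and $\Cgr(\cat A)$ is generated over degree~$0$.

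For the dg statement the only extra point is that $s_M$ is a cycle in $\Cdg(\cat A)(SM,M)$. This is checked componentwise: since the differential of $M(X)[-1]$ is $-d_{M(X)}$ and $\iota_{M(X)}$ is the identity on underlying spaces, the commutator differential gives $d(\iota_{M(X)})=d_{M(X)}\circ\iota_{M(X)}-(-1)^{1}\iota_{M(X)}\circ(-d_{M(X)})=0$, whence $d(s_M)=0$; thus $\Cdg(\cat A)$ is dg generated over degree~$0$. The only care needed anywhere is routine bookkeeping with the Koszul signs carried by the shift (e.g.\ $f[1]=(-1)^{|f|}f$ and $d_{V[1]}=-d_V$), so there is no genuine obstacle — the ``main step'' is just identifying the right sign of shift so that the tautological identification $SM\to M$ lands in degree~$+1$.
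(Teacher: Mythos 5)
Your proof is correct and follows the same basic plan as the paper's: exhibit the tautological shift isomorphism as the required degree~$+1$ natural transformation and, in the dg case, check it is a cycle. The one genuine difference is your choice of $S=[-1]$ rather than $S=[1]$, and you are right to make it: with the paper's convention $V[1]_n=V_{n-1}$, the identity on underlying spaces $V[1]\to V$ has degree $-1$, so it cannot serve as the $s_X\in\cat A_1(SX,X)$ demanded by Definition~\ref{def:GeneratedOverZero}, whereas $V[-1]\to V$ has degree $+1$ as required. The paper's own proof writes $s_X\colon X[1]\to X$ and calls it degree~$+1$, which is a small slip (or an implicit switch of shift convention); your version is the one actually compatible with the sign convention stated earlier in the paper. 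Mathematically this is a cosmetic matter---$[1]$ and $[-1]$ are both reasonable choices for the distinguished $\Z$-automorphism, and the lemma holds either way---but your bookkeeping is the more careful one. The reduction of the graded case to the pointwise statement about $\Cgr(\k)$, and the verification that $d(\iota_{M(X)})=0$ via the commutator differential with $d_{V[-1]}=-d_V$, are both correct.
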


\begin{proof}
Note that it suffices to prove the claim for $\Cgr(\cat A)$, since $\Cdg(\cat A)$ is a full subcategory of $\Cgr(\cat A)$ closed under suspension. For a graded space $X$, the identity map of the underlying ungraded space gives a canonical degree $1$ isomorphism $s_X:X[1]\to X$ between graded spaces, and $s_Y\circ f[1]=(-1)^nf\circ s_X$ for $f:X\to Y$ a degree $n$ graded map. In particular, if $f$ has degree 0, then $s_Y\circ f[1]=f\circ s_X$, so the isomorphisms $s_X:X[1]\to X$ give a natural equivalence $s:[1]\xrightarrow{\sim}\Id_0$. Thus $\Cgr(\cat A)$ is generated over degree 0. The morphisms $s_X$ are cycles in $\Cdg(\cat A)$ completing the proof.
\end{proof}

The subcategory $\Pdg(\cat A)$ of projective complexes in $\Cdg(\cat A)$ is closed under suspension, and hence is also dg generated over degree 0. Since the bounded derived category of $\cat A$ is equivalent to $H_0\Pdg(\cat A)$, the above Lemma combined with Proposition \ref{prop:HomologyOverZero} gives the following:

\begin{corollary}\label{cor:AinftyEnvelope}
There is a graded equivalence of categories $$H_*\Pdg(\cat A)\xrightarrow{\sim}\Db(\cat A)^\Z. $$ \qed
\end{corollary}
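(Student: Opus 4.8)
The plan is to assemble the equivalence from the three ingredients already in place. First I would invoke Happel's theorem, recalled just after the definition of the homotopy category, which gives an equivalence $\Db(\cat A)\simeq H_0\Pdg(\cat A)$ of ordinary (degree $0$) categories. This identifies the degree $0$ subcategory of the graded category $H_*\Pdg(\cat A)$ with $\Db(\cat A)$, and under this identification the automorphism $S$ of the degree $0$ part corresponds to the shift functor $[1]$ on $\Db(\cat A)$.

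Next I would feed in the Lemma just proved: $\Cdg(\cat A)$, and hence its full suspension-closed subcategory $\Pdg(\cat A)$, is dg generated over degree $0$, witnessed by the canonical degree $+1$ cycles $s_X:X[1]\to X$. By Proposition \ref{prop:HomologyOverZero}, the property of being generated over degree $0$ passes to homology, so $H_*\Pdg(\cat A)$ is generated over degree $0$. Then the Proposition preceding Proposition \ref{prop:HomologyOverZero} — the equivalence \eqref{eqn:GradedOverZero} for a category generated over degree $0$ — applies to $\cat A \rightsquigarrow H_*\Pdg(\cat A)$ and yields a graded equivalence
\[
\bigl(H_0\Pdg(\cat A)\bigr)^\Z \xrightarrow{\ \sim\ } H_*\Pdg(\cat A).
\]
Finally I would apply the functoriality of the orbit-category construction (the Corollary about $\Z$-functors inducing graded functors between orbit categories) to the equivalence $\Db(\cat A)\simeq H_0\Pdg(\cat A)$, which is a $\Z$-functor because Happel's equivalence intertwines $[1]$ on $\Db(\cat A)$ with $S$ on $H_0\Pdg(\cat A)$; this gives a graded equivalence $\Db(\cat A)^\Z \simeq \bigl(H_0\Pdg(\cat A)\bigr)^\Z$. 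Composing the three equivalences produces the desired graded equivalence $H_*\Pdg(\cat A)\xrightarrow{\sim}\Db(\cat A)^\Z$.

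The only subtle point — and the step I would be most careful about — is checking that Happel's equivalence is genuinely a $\Z$-functor, i.e. that it strictly commutes with the respective shift automorphisms rather than merely commuting up to natural isomorphism. One has to be slightly careful here about whether the shift on $H_0\Pdg(\cat A)$ is literally the restriction of the graded automorphism $S^\Z$ coming from the $s_X$, and whether Happel's functor can be chosen (or adjusted by the corollary's flexibility, which only requires the square to commute up to natural isomorphism) to be $\Z$-equivariant. Since the Corollary about $\Z$-functors only needs strict commutation to produce a graded functor, but the Proposition giving \eqref{eqn:GradedOverZero} already does all the heavy lifting intrinsically on $H_*\Pdg(\cat A)$, the cleanest route is to take $(H_0\Pdg(\cat A))^\Z$ as the intermediary and only transport the ordinary equivalence $\Db(\cat A)\simeq H_0\Pdg(\cat A)$ across — which, being an equivalence of $\Z$-categories, induces an equivalence of orbit categories without needing strictness, because the orbit category is invariant under equivalences of $\Z$-categories. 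So in the write-up I would state that last transport as "an equivalence of $\Z$-categories induces an equivalence of orbit categories" and leave the (routine) verification implicit.
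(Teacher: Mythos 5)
Your proposal is correct and follows essentially the same route as the paper: observe that $\Pdg(\cat A)$ is closed under suspension so it inherits dg generation over degree~$0$ from the Lemma, apply Proposition~\ref{prop:HomologyOverZero} to get the graded equivalence $(H_0\Pdg(\cat A))^\Z \simeq H_*\Pdg(\cat A)$, and transport across Happel's equivalence $\Db(\cat A)\simeq H_0\Pdg(\cat A)$. The paper treats that last transport as immediate; your cautionary remark about strictness versus equivalence of $\Z$-categories is a reasonable thing to flag, and your resolution (orbit categories are invariant under equivalences of $\Z$-categories) is the right one.
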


Thus by Kad\-ei\v{s}\-vili's Theorem, $\Db(\cat A)^\Z$ admits the structure of a minimal $A_\infty$-cat\-e\-gory. The orbit category $\Db(\cat A)^\Z$ can be thought of as an \emph{$A_\infty$-envelope} of the triangulated category $\Db(\cat A)$, in the sense that it contains $\Db(\cat A)$ as its degree zero subcategory, and the composition in $\Db(\cat A)$ is given by the $A_\infty$-composition $\mu_2$ of $\Db(\cat A)^\Z$.

We wish to construct an explicit model of this $A_\infty$-structure by applying the Homotopy Transfer Theorem, but this requires us to replace the category $\Pdg(\cat A)$ by a small dg category. In order for the resulting $A_\infty$-structure to be compatible with the stabilization $\sigma:[1]\xrightarrow{\sim}\Id$, we use $\sigma$-pseudo-skeleta.

\subsection{Invariant splittings} 

Suppose $\cat A$ is a small dg category dg generated over degree 0, and $\gamma$ a stabilization of the inclusion $\cat A_0\to\cat A$ by cycles.

\begin{definition}\label{def:GammaInvariant} 
A collection of homotopy retractions $(j_{XY},q_{XY},\phi_{XY})$ of the morphism spaces $\cat A(X,Y)$ is \emph{$\gamma$-invariant} if in the following diagram of homotopy retractions
$$\xymatrix{
\cat A(X,SY)\ar[r]^{(\gamma_Y)_*}\ar@/_/[d]_{q_{X,SY}}\ar@(ul,ur)^{\phi_{X,SY}} & \cat A(X,Y)\ar@/_/[d]_{q_{XY}}\ar[r]^{\gamma_X^*}\ar@(ul,ur)^{\phi_{XY}} & \cat A(SX,Y)\ar@/_/[d]_{q_{SX,Y}}\ar@(ul,ur)^{\phi_{SX,Y}} \\
H_*\cat A(X,SY)\ar[r]^{[\gamma_Y]_*}\ar@/_/[u]_{j_{X,SY}} & H_*\cat A(X,Y)\ar[r]^{[\gamma_X]^*}\ar@/_/[u]_{j_{XY}} & H_*\cat A(SX,Y)\ar@/_/[u]_{j_{SX,Y}}}$$

the equalities

\begin{align}\label{eqn:InvariantRetractions}
\begin{array}{c@{\mskip\thickmuskip}l}
q_{XY}\circ(\gamma_Y)_*=[\gamma_Y]^*\circ q_{X,SY} \\
j_{XY}\circ[\gamma_Y]_*=\gamma_Y^*\circ j_{X,SY} \\
\phi_{XY}\circ(\gamma_Y)_*=(\gamma_Y)_*\circ \phi_{X,SY}
\end{array}
\qquad \qquad
\begin{array}{c@{\mskip\thickmuskip}l}
q_{SX,Y}\circ(\gamma_X)_*=[\gamma_X]^*\circ q_{XY} \\
j_{SX,Y}\circ[\gamma_X]_*=\gamma_X^*\circ j_{XY} \\
\phi_{SX,Y}\circ\gamma_X^*=\gamma_X^*\circ \phi_{XY}
\end{array}
\end{align} 

hold.
\end{definition}

If the degree zero subcategory of $\cat A$ is directed, then by Lemma \ref{lem:PseudoSkeletons} it admits a $\sigma$-pseudo-skeleton for the isomorphisms given by the stabilization $\sigma$ of the suspension functor.

\begin{lemma}\label{lem:InvariantSplittings}
If $\cat A_0$ is directed and skeletally small, then the dg category $\psk(\cat A)$ admits $\sigma$-invariant homotopy retractions.
\end{lemma}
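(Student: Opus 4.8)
The plan is to build the $\sigma$-invariant homotopy retractions on $\psk(\cat A)$ one object-orbit at a time, using the $\Z$-action to propagate a single choice across each orbit. Since $\cat A_0$ is directed and skeletally small, Lemma \ref{lem:PseudoSkeletons} furnishes a $\sigma$-pseudo-skeleton $\psk(\cat A)$ whose objects, up to automorphism, lie in a fixed $\Z$-stable skeleton; in particular the objects of $\psk(\cat A)$ break into $S^\Z$-orbits, and the stabilization isomorphisms $\sigma_X\colon S^\Z X\to X$ (and their iterates $\sigma^n_X$) are morphisms of $\psk(\cat A)$. I would first pick an orbit representative $X_0$ for each orbit and, for every pair of representatives $(X_0,Y_0)$, choose an arbitrary homotopy retraction $(j_{X_0Y_0},q_{X_0Y_0},\phi_{X_0Y_0})$ of the dg space $\cat A(X_0,Y_0)$ onto its homology — this is possible by the existence of homotopy retracts for dg spaces recalled before Theorem \ref{thm:HomotopyTransfer}.

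The key step is then to \emph{transport} these choices along the isomorphisms $s_*$ and $s^*$ of Lemma \ref{lem:GeneratedOverZero}. For arbitrary objects $X=S^{\Z p}X_0$ and $Y=S^{\Z q}Y_0$, the composite of the appropriate powers of $s_*$ and $s^*$ gives a degree-0 dg isomorphism $\cat A(X_0,Y_0)\xrightarrow{\sim}\cat A(X,Y)$, and I would \emph{define} $(j_{XY},q_{XY},\phi_{XY})$ to be the conjugate of $(j_{X_0Y_0},q_{X_0Y_0},\phi_{X_0Y_0})$ by this isomorphism (using the parallel transport of homology induced by the same maps on the target side). Because conjugation by a dg isomorphism carries a homotopy retraction to a homotopy retraction — $qj=\mathrm{id}$ and $\phi\colon\mathrm{id}\simeq jq$ are preserved — the transported data is again a valid homotopy retraction. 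The equations \eqref{eqn:InvariantRetractions} are then exactly the statement that this transport is compatible with one more application of $(\gamma_Y)_*$ or $\gamma_X^*$, which holds by construction: $(\gamma_Y)_* = (s_*)^{-1}$ up to the canonical identification $\sigma_Y = s_Y$ (and similarly $\gamma_X^* = s^*$), so the left and right halves of the diagram in Definition \ref{def:GammaInvariant} are built from the very isomorphisms along which we transported. One must check the three families of identities are mutually consistent — i.e. transporting by $s_*$ then $s^*$ agrees with transporting by $s^*$ then $s_*$ — but this is precisely the commuting square at the end of Lemma \ref{lem:GeneratedOverZero}, so no independent verification is needed.

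The main obstacle is bookkeeping the signs and the shift conventions: the maps $s_*$, $s^*$ involve shifts $[\pm 1]$ and the sign $(-1)^{|f|}$, the homology transport maps $[\gamma_Y]_*$, $[\gamma_X]^*$ carry their own shifts, and the differentials on shifted spaces pick up a sign $d[\pm 1]=-d$; one has to be careful that the transported $\phi_{XY}$ still has degree $+1$ and still satisfies $d\phi+\phi d = \mathrm{id}-jq$ rather than $\mathrm{id}+jq$. All of this is already packaged in Lemma \ref{lem:GeneratedOverZero} (the isomorphisms there are asserted to be degree-0 dg isomorphisms and the square commutes on the nose), so the argument reduces to invoking that lemma rather than recomputing; the only genuine content beyond it is the observation that conjugating a homotopy-retract triple by a dg isomorphism yields a homotopy-retract triple, together with the remark that well-definedness across the pseudo-skeleton (two objects in the same orbit that happen to be equal, or related by an automorphism in $\psk(\cat A)$) is forced by directedness, exactly as in the proof of Lemma \ref{lem:PseudoSkeletons}.
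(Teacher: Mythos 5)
Your proposal is correct and follows essentially the same route as the paper: choose base homotopy retractions on one orbit-representative pair $(X_0,Y_0)$ and propagate to all $(S^pX_0,S^qY_0)$ by forcing the invariance equations \eqref{eqn:InvariantRetractions}. The paper leaves the ``uniquely determine'' step terse, whereas you spell out that the transport is conjugation by the degree-0 dg isomorphisms of Lemma \ref{lem:GeneratedOverZero} (hence preserves the retraction identities) and that consistency across different transport paths is exactly the commuting square at the end of that lemma; this is a useful elaboration of, not a departure from, the paper's argument.
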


\begin{proof}
Since $\cat A_0$ is directed, Lemma \ref{lem:PseudoSkeletons} gives a $\sigma$-pseudo-skeleton for $\cat A$. By construction, the objects of $\psk\cat A$ are of the form $S^nX$ for some $X$ with $\pi X$ an object of a fixed small skeleton $\sk(\cat A)$. Choose homotopy retractions $(j_{XY},q_{XY},\phi_{XY})$ for such $X$ and $Y$. For arbitrary $S^pX$ and $S^qY$, the equations \eqref{eqn:InvariantRetractions} uniquely determine $\sigma$-invariant homotopy retractions of each $\cat A(X,Y)$.
\end{proof}

\subsection{Suspension invariance}

Let $(\mu_n)_{n\geq 2}$ be the minimal $A_\infty$-structure on the pseudo-skeleton of $H_*\cat A$ given by applying the Homotopy Transfer Theorem to the splittings of Lemma \ref{lem:InvariantSplittings}.

\begin{proposition}\label{prop:SuspensionInvariance}
The $A_\infty$-compositions $\mu_n$ are $[\sigma]$-invariant in the sense that

\begin{gather*} 
[\sigma]_*\circ\mu_n = \mu_n\circ\left([\sigma]_*\otimes id^{\otimes(n-1)}\right)\\
[\sigma]^*\circ\mu_n  =  \mu_n\circ\left(id^{\otimes(n-1)}\otimes [\sigma]^*\right)
\end{gather*}
and
\begin{gather*} 
\mu_n\circ\left(id^{\otimes(k-1)}\otimes([\sigma]^*\otimes id)\otimes id^{\otimes(n-k-1)}\right) \\
= \mu_n\circ\left(id^{\otimes(k-1)}\otimes(id\otimes[\sigma]_*)\otimes id^{\otimes(n-k-1)}\right) \end{gather*}

for $0<k<n$.
\end{proposition}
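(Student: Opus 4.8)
The plan is to leverage the explicit tree formula for the transferred $A_\infty$-structure from Theorem~\ref{thm:HomotopyTransfer} together with the $\sigma$-invariance of the chosen homotopy retractions from Lemma~\ref{lem:InvariantSplittings}. Recall that $\mu_n=\sum_{T\in PBR_n}\sgn(T)\,\mu_T$ where $\mu_T=q\circ\nu_T\circ j^{\otimes n}$, and $\nu_T$ is built (Construction~\ref{constr:NuMaps}) by placing $j$ on leaves, $q$ on the root, $\phi$ on internal edges, and the dg-category composition $\mu$ on internal vertices. The key point is that each of the three ``slots'' appearing in the invariance identities --- the leftmost leaf, the root, and a pair of adjacent leaves $k,k+1$ --- is acted on by one of the maps $(\sigma_?)_*$, $\sigma_?^*$ at a single location in the tree, and the invariance equations \eqref{eqn:InvariantRetractions} say precisely that these maps can be ``pushed through'' $j$, $q$, and $\phi$.

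First I would treat the identity $[\sigma]_*\circ\mu_n=\mu_n\circ([\sigma]_*\otimes id^{\otimes(n-1)})$. Using \eqref{eqn:homologyandk}/Corollary~\ref{cor:AinftyEnvelope} to identify $H_*\cat A(X,Y)$-level maps with their lifts, it suffices to verify the analogous identity for each $\mu_T$ before summing. Applying $[\sigma]_*$ to the output means composing $q_{X_0,X_n}$ with $(\gamma)_*$ on the target $X_n$; by the first equation of the left column of \eqref{eqn:InvariantRetractions}, $q\circ(\gamma)_* = [\gamma]^*\circ q$ at the homology level, so the $\sigma$ can be absorbed into the source-side and commuted down the rightmost branch of $T$ through each $\phi$ (using the third left-column equation $\phi\circ(\gamma)_*=(\gamma)_*\circ\phi$) and through the dg composition $\mu$ (which is associative and $\sigma$-equivariant since $\sigma_X$ is a cycle and the composition is a chain map, giving $\mu\circ((\gamma)_*\otimes id)=(\gamma)_*\circ\mu$ on the appropriate factor), until it reaches the rightmost leaf, where $j\circ[\gamma]_* = \gamma^*\circ j$ converts it back. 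A bookkeeping of signs --- tracking the $(-1)^{|f|}$ factors in the definitions of $s_*,s^*$ and in $d[\pm1]=-d$ --- shows the signs match; the combinatorial signs $\sgn(T)$ are untouched since $T$ is not changed. The dual identity $[\sigma]^*\circ\mu_n=\mu_n\circ(id^{\otimes(n-1)}\otimes[\sigma]^*)$ is symmetric, using the right column of \eqref{eqn:InvariantRetractions} and commuting $\sigma^*$ down the leftmost branch of $T$.

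For the ``internal'' identity relating $[\sigma]^*$ on the $k$-th input with $[\sigma]_*$ on the $(k+1)$-st, the two inputs in positions $k$ and $k+1$ feed into two adjacent leaves of $T$; walking toward the root, their branches merge at some internal vertex $v$. Inserting $\sigma_{?}^*$ on leaf $k$ produces, after commuting through the $j$'s and $\phi$'s of the left branch below $v$, a factor on the target of that branch; inserting $(\gamma_?)_*$ on leaf $k+1$ produces a factor on the source of the right branch below $v$. Since $T^-(v)$'s target object and $T^+(v)$'s source object are the \emph{same} object (the one being composed over at $v$), and $\gamma^*$ precomposing $=(\gamma)_*$ postcomposing the same $\sigma$ up to the sign recorded in Lemma~\ref{lem:GeneratedOverZero} (the square there commutes), the two contributions agree. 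I expect the main obstacle to be the sign bookkeeping: one must carefully track the Koszul signs introduced when passing $\sigma$ (a degree $\pm1$ map) past the degree-shifting data $j,q,\phi$ and past tensor factors, and confirm that the signs from $s_*$ versus $s^*$ (which differ by $(-1)^{|f|}$, cf.\ Lemma~\ref{lem:GeneratedOverZero}) are exactly what is needed for the two sides of the ``internal'' identity to coincide; the tree combinatorics itself is straightforward once one observes that no tree is ever reindexed.
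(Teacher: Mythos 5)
Your proposal is correct and follows the same approach as the paper: decompose $\mu_n=\sum_T\sgn(T)\mu_T$, use the $\sigma$-invariance of $(j,q,\phi)$ from Lemma~\ref{lem:InvariantSplittings}, and push a $\sigma$ through the tree from one boundary location to another. The paper packages the ``pushing through'' as a clean induction on PBR trees: the base case is the 2-tree $Y$, where $\nu_Y=\mu$ is ordinary composition and $\sigma$-equivariance is automatic; for arbitrary $T$, writing $\nu_T=\mu\circ((\phi\circ\nu_{T^-})\otimes(\phi\circ\nu_{T^+}))$, the inductive hypothesis handles everything except the three boundary configurations where the $\sigma$ must cross the root vertex (left subtree is a leaf, right subtree is a leaf, or the $k$-th and $(k+1)$-st leaves straddle the root). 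Your narration of ``commuting down a branch'' is the same computation unrolled, and your treatment of the internal identity — the two leaves merging at a vertex $v$, where the target of the left branch equals the source of the right — is exactly the content of the paper's case 2, with the inductive reduction making $v$ the root. One thing you can relax on: the invariance equations \eqref{eqn:InvariantRetractions} are stated as strict equalities, so no extra Koszul signs appear when threading $\sigma$ past $q$, $j$, or $\phi$; the only signs one needs to track come from passing a degree-$1$ operator past tensor factors when commuting through $\mu$, and these cancel between the two sides of each identity. Your induction-free account therefore really is the paper's argument, just phrased diagrammatically.
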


\begin{proof}
Recall that $\mu_n$ is given as a sum of maps $\mu_T$ over PBR $n$-trees $T$ so it suffices to show that the maps $\mu_T$ satisfy the formulae of the proposition. The idea of the proof is simple: The map $\mu_T$ is made by replacing the edges of $T$ by morphisms which are all $\sigma$-equivariant. So, a $[\sigma]$ term can be freely pushed through the tree from one leaf to another.

To prove the proposition formally, let $\nu_T$ be the map constructed in Construction \ref{constr:NuMaps} so that $\mu_T=q\circ\nu_T\circ j^{\otimes n}$. Since the splittings $(j,q,\phi)$ are $\sigma$-invariant, it suffices to show that the $\nu_T$ satisfy formulae analogous to those of the proposition. We proceed by induction on $T$.

Note that for the unique PBR 2-tree $Y$, the map $\nu_Y$ is just ordinary composition in $\cat A$, which manifestly satisfies the above formulae.

For arbitrary $T$, we are done by induction except for the following cases:
\begin{enumerate}
\item If the left subtree of $T$ is a leaf we still need to show $$[\sigma]_*\circ\nu_T=\nu_T\circ\left([\sigma]_*\otimes id^{\otimes(n-1)}\right),$$
\item if $0<k<n$ and the $k$-th leaf is on the left subtree of $T$ and the $(k+1)$-st leaf is on the right subtree of $T$, we still need to show 
\begin{gather*}
\nu_T\circ\left(id^{\otimes(k-1)}\otimes([\sigma]^*\otimes id)\otimes id^{\otimes(n-k-1)}\right) \\
=\nu_T\circ\left(id^{\otimes(k-1)}\otimes(id\otimes[\sigma]_*)\otimes id^{\otimes(n-k-1)}\right),
\end{gather*}
\item and if the right subtree of $T$ is a leaf we still need to show $$[\sigma]^*\circ\nu_T=\nu_T\circ\left(id^{\otimes(n-1)}\otimes [\sigma]^*\right).$$
\end{enumerate}

Let us prove the second case; the other two are similar. Recall that the left and right subtrees $T^-$ and $T^+$ of $T$ are the PBR trees obtained from $T$ by deleting a small neighborhood around the unique vertex adjacent to the root edge of $T$ (\emph{cf.} Section \ref{sec:HTT}). Since 
$\nu_T=\mu\circ(\phi\circ\nu_{T^-})\otimes(\phi\circ\nu_{T^+})$, the left-hand side of the equation is

\begin{gather*}
\nu_T\circ\left(id^{\otimes(k-1)}([\sigma]^*\otimes id)\otimes id^{\otimes(n-k-1)}\right) \\
=\mu\circ(\phi\circ\nu_{T^-})\otimes(\phi\circ\nu_{T^+})\circ\left(id^{\otimes(k-1)}\otimes [\sigma]^*\otimes id^{\otimes(n-k)}\right) \\
=\mu\circ\left(\phi\circ\nu_{T^-}\circ(id^{\otimes(k-1)}\otimes [\sigma]^*)\otimes(\phi\circ\nu_{T^+}\circ id^{\otimes(n-k)})\right) \\
=\mu\circ\left(\phi\circ\nu_{T^-}\circ(id^{\otimes(k-1)})\otimes([\sigma]_*\circ\phi\circ\nu_{T^+}\circ id^{\otimes(n-k)})\right)
\end{gather*}
by induction. Similarly, the right-hand side is equal to $$\mu\circ\left(\phi\circ\nu_{T^-}\circ(id^{\otimes(k-1)})\otimes(\phi\circ[\sigma]_*\circ\nu_{T^+}\circ id^{\otimes(n-k)})\right),$$
which equals the left-hand side by $\sigma$-equivariance of the homotopy $\phi$. This completes the proof of the proposition.
\end{proof}

\subsection{Triangles}

The homology $H_*\Pdg(\cat A)$ is equivalent to $\Db(\cat A)^\Z$ by Corollary \ref{cor:AinftyEnvelope}, so $\mu_3$ maps triples of morphisms in $\Db(\cat A)$ into $\Db(\cat A)^\Z_1$. A slight rephrasing of Lemma 3.7 from \cite{Seidel} to our situation gives the following.

\begin{proposition}[\cite{Seidel} Lemma 3.7]\label{prop:Triangles}
If $\xymatrix@C=12pt{X\ar[r]^f & Y\ar[r]^g & Z\ar[r]^h & X[1]}$ is a non-split triangle in $\Db(\cat A)$ then $$\mu_3(h,g,f)=s_X$$ where $s_X:X\to X[1]$ is the canonical degree $1$ map. \qed
\end{proposition}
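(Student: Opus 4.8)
The plan is to reduce the statement to an explicit model of the triangle inside $\Pdg(\cat A)$ and then unwind the Homotopy Transfer formula for $\mu_3$. Since the equivalence $H_*\Pdg(\cat A)\xrightarrow{\sim}\Db(\cat A)^\Z$ of Corollary \ref{cor:AinftyEnvelope} makes $\mu_3$ a well-defined operation on triples of morphisms of $\Db(\cat A)$, and since both sides of the asserted identity depend only on the isomorphism class of the triangle, I would first replace the given triangle by a mapping-cone triangle: pick bounded complexes of projectives $X$, $Y$ and an honest chain map $f\colon X\to Y$ representing the first morphism, set $Z=C(f)$, and take $g=\iota_Y\colon Y\to C(f)$ and $h=\pi_{X[1]}\colon C(f)\to X[1]$ to be the canonical structure maps of the cone. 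I would run the computation using the $\sigma$-invariant homotopy retractions $(j,q,\phi)$ on the pseudo-skeleton $\psk(\Pdg(\cat A))$ furnished by Lemma \ref{lem:InvariantSplittings}, and, after replacing $f,g,h$ by homologous harmonic representatives, arrange that each lies in the image of the relevant $j$, so that the leaf insertions in the transfer formula become trivial.

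Next I would expand $\mu_3=\sum_{T\in PBR_3}\sgn(T)\,\mu_T$ over the two planar binary rooted $3$-trees. Unwinding Construction \ref{constr:NuMaps}, the two summands applied to $(h,g,f)$ are, up to sign, $q\bigl(h\circ\phi(g\circ f)\bigr)$ and $q\bigl(\phi(h\circ g)\circ f\bigr)$. The key observation is that for the cone model one has $h\circ g=\pi_{X[1]}\circ\iota_Y=0$ \emph{strictly} in $\Pdg(\cat A)$, so $\phi(h\circ g)=0$ and the second tree contributes nothing; hence $\mu_3(h,g,f)=\pm\,q\bigl(h\circ\phi(g\circ f)\bigr)$. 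Note it is essential here that $g\circ f$ is \emph{not} strictly zero but only a boundary: if one could arrange $g\circ f=0$ on the nose the answer would be $0$, so the cone model with $g=\iota_Y$ is exactly the right choice.

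It remains to identify $\phi(g\circ f)$ with the canonical contracting homotopy of the cone. The composite $g\circ f=\iota_Y\circ f\colon X\to C(f)$ is a cycle in the morphism complex $\Pdg(\cat A)(X,C(f))$ and a boundary, with the standard explicit primitive $\eta\colon X\to C(f)$ given by the identity of $X$ into the $X[1]$-summand; it is characterized by $d\eta=g\circ f$ and $\pi_{X[1]}\circ\eta=s_X$. Since $q(g\circ f)=0$, the element $\phi(g\circ f)$ is again a primitive of $g\circ f$, so it differs from $\eta$ by a cycle. I would arrange the retraction $\phi$ on $\Pdg(\cat A)(X,C(f))$ so that $\phi(g\circ f)=\eta$ exactly — concretely, by building $\phi$ from a splitting of the morphism complex whose contractible part contains $\eta$ — and check, using that the $\sigma$-equivariance constraints of Definition \ref{def:GammaInvariant} only relate morphism spaces within a single $\sigma$-orbit, that this local choice extends to a coherent global one. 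Then $\mu_3(h,g,f)=\pm\,q(h\circ\eta)=\pm\,[s_X]$, and tracking the signs coming from $\sgn(T)$ and the shift conventions (absorbing a sign, if necessary, into the anticanonical stabilization $\bar\sigma$ of $\pi$) yields $\mu_3(h,g,f)=s_X$.

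The main obstacle is precisely this last step. The transfer formula only pins down $\mu_3(h,g,f)$ once the homotopy $\phi$ has been chosen, and an equality $\mu_3(h,g,f)=s_X$ on the nose — rather than merely membership in the coset given by the indeterminacy of the associated triple Massey product, which is in general nonzero — is reached only for a $\phi$ adapted to the canonical null-homotopy of the cone. So the technical content is verifying that such a $\phi$ can be selected coherently and $\sigma$-invariantly, plus the sign bookkeeping; this is exactly Seidel's Lemma 3.7 in \cite{Seidel}, transported along Corollary \ref{cor:AinftyEnvelope}. The remaining verifications — that the two PBR $3$-trees produce the two displayed terms, and the cone identities $d\eta=g\circ f$ and $\pi_{X[1]}\circ\eta=s_X$ — are routine once the conventions for shifts, cones, and the differential on morphism complexes are fixed.
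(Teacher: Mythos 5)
Your approach matches the paper's exactly — the paper's entire ``proof'' of Proposition~\ref{prop:Triangles} is a citation of Seidel's Lemma~3.7, and your sketch is a reasonable reconstruction of what that argument looks like when unpacked through the Homotopy Transfer Theorem. The reduction to a mapping-cone model, the observation that of the two PBR $3$-trees only the one with $\phi(g\circ f)$ survives because $h\circ g = 0$ strictly, and the identification of $\phi(g\circ f)$ with (a correction of) the canonical cone null-homotopy $\eta$ are all the right moves.

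One place where your diagnosis of the main obstacle is slightly off, though, and it's worth correcting because it changes where the real work lies. You locate the difficulty in the choice of the homotopy $\phi$: you say equality on the nose is ``reached only for a $\phi$ adapted to the canonical null-homotopy.'' In fact, once the Koszul side conditions $q\phi = 0$, $\phi j = 0$, $\phi^2 = 0$ are imposed — and these are baked into the Loday--Vallette form of the Homotopy Transfer Theorem that the paper quotes — the transferred $\mu_3$ is \emph{independent} of the choice of $\phi$ once $(j,q)$ are fixed: if $\phi$ and $\phi'$ are two such homotopies then $\phi - \phi'$ is a degree-one \emph{chain map}, and since $g\circ f$ is a boundary, $(\phi - \phi')(g\circ f)$ is itself a boundary, which dies after composing with $h$ and applying $q$. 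What does survive is a dependence on the harmonic subspace $\operatorname{im}(j)$: with the side conditions one finds, after writing $\eta$ in its harmonic decomposition, that $\mu_3(h,g,f) = \pm s_X \pm [h]\cdot q(\eta)$, where $q(\eta) \in H_1\bigl(\Pdg(\cat A)(X,C(f))\bigr)$ is the harmonic component of the cone homotopy. So the genuine question is whether $[h]\cdot q(\eta)$ vanishes, i.e. whether $q(\eta)$ dies under post-composition with $h$, and that is controlled by $\operatorname{im}(j)$, not $\phi$. Note also that this error term lands in $\Db(\cat A)(X,X[2]) = \Ext^2_{\cat A}(X,X)$, so in the hereditary setting — which is all that Theorem~\ref{thm:DerivedCategoryAinfty} actually uses — it vanishes for degree reasons, with no clever choice of retraction required. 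Finally, a small point to keep straight: when you ``arrange that each lies in the image of the relevant $j$,'' you are constraining the harmonic decomposition so that $j[h] = h$ and $j[g] = g$; this is the right move and is what preserves the strict relation $h\circ g = 0$ after the leaf insertions — without it, the second tree need not drop out.
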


\subsection{Vanishing of higher compositions}

Recall that an abelian category $\cat A$ is \emph{hereditary} if $\Ext^n_\cat A(X,Y)$ vanishes for $n>1$ and any two objects $X$ and $Y$. We say an abelian category is \emph{directed} if its bounded derived category is directed with respect to the shift functor.

For hereditary $\cat A$, every indecomposable object of $\Db(\cat A)$ is of the form $X[d]$ for some $d\in\Z$ and indecomposable object $X$ of $\cat A$ (\emph{cf. e.g.,}  \cite{Happel2}). Thus, if $X[p]$ and $Y[q]$ are objects of $\Db(\cat A)$ with $X$ and $Y$ in $\cat A$, $$\Db(\cat A)(X[p],Y[q])=\Ext_{\cat A}^{q-p}(X,Y).$$ A hereditary category is directed, since the non-vanishing of both $\Ext_\cat A^n(X,Y)$ and $\Ext_{\cat A}^{-n}(Y,X)$ implies $n=0$.

\begin{proposition}\label{prop:HighlyTrivial}
If $\cat A$ is a hereditary category with enough projectives, then $\mu_n$ vanishes for $n>3$.
\end{proposition}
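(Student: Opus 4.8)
The plan is to use the tree formula for the transferred $A_\infty$-structure together with a degree count coming from heredity. Fix a $\sigma$-pseudo-skeleton of $\Pdg(\cat A)$ with $\sigma$-invariant homotopy retractions as in Lemma~\ref{lem:InvariantSplittings}, and recall that by Proposition~\ref{prop:SuspensionInvariance} the compositions $\mu_n$ are $[\sigma]$-invariant. The key reduction is that, by $[\sigma]$-invariance, it suffices to check the vanishing of $\mu_n$ on tuples of morphisms between objects of $\cat A$ itself (rather than arbitrary shifts): any indecomposable of $\Db(\cat A)$ is of the form $X[d]$ with $X\in\cat A$ since $\cat A$ is hereditary, and a morphism $X[p]\to Y[q]$ in $\Db(\cat A)$ is, up to applying powers of $[\sigma]$, a morphism in $\Db(\cat A)(X,Y)=\Hom_{\cat A}(X,Y)\oplus\Ext^1_{\cat A}(X,Y)$, concentrated in homological degrees $0$ and $1$ only.

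Next I would set up the degree bookkeeping on a single tree $T\in PBR_n$. Write the transferred map as $\mu_T=q\circ\nu_T\circ j^{\otimes n}$, where $\nu_T$ is built (Construction~\ref{constr:NuMaps}) by placing $j$ on leaves, $q$ on the root, the degree-$1$ homotopy $\phi$ on each of the $n-2$ internal edges, and the dg composition $\mu_2$ of $\Pdg(\cat A)$ at each of the $n-1$ vertices. Here is the crucial point: because $\cat A$ has enough projectives and is hereditary, each $\Hom$-complex $\Pdg(\cat A)(P,Q)$ between projective resolutions has homology concentrated in degrees $0$ and $1$; more importantly, the \emph{homotopy} $\phi_{XY}$ can (and should) be chosen so that it raises internal degree, and the image of $j$ together with $\phi$ forces each intermediate composite computed along the tree to land in a bounded range of homological degrees. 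Concretely, feeding in $n$ classes each of homological degree $0$ or $1$ (total degree $\le n$) and noting $\mu_n$ has degree $2-n$, the output sits in degree $\le 2$; but the output is a morphism in $\Db(\cat A)^\Z_{n-2}$, i.e.\ in $\Ext^{*}_{\cat A}$ after untwisting by $[\sigma]$, which vanishes above degree $1$. Pushing this estimate one step further — using that the partial composites at the internal vertices must themselves be morphisms in the orbit category, hence have components only in $\Ext^0$ and $\Ext^1$, while each internal edge contributes a $\phi$ that strictly increases homological degree — shows that for $n>3$ there is simply no room: every tree-summand $\mu_T$ is forced to factor through a homology group $\Ext^{\ge 2}_{\cat A}=0$, so $\mu_T=0$ and hence $\mu_n=\sum_{T}\sgn(T)\mu_T=0$.

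The main obstacle I anticipate is making the degree-counting argument genuinely tight rather than merely plausible. The naive count (inputs of total degree $\le n$, output of degree $n-2$) only rules out $n>4$; to kill $n=4$ one must exploit more carefully that each $\phi$ strictly raises the homological filtration and that the intermediate nodes of the tree are themselves constrained to the window $\{0,1\}$ of $\Ext$-degrees, not just the final output. This is where the hereditary hypothesis does real work: it collapses the $\Hom$-complexes so severely that a length-$2$ "zig-zag" through a $\phi$ already exhausts the available degrees. I would handle this by an explicit induction on the tree (paralleling the induction in the proof of Proposition~\ref{prop:SuspensionInvariance}): show that $\nu'_{T}=\phi\circ\nu_T$ takes inputs in $\Ext$-degrees $\{0,1\}$ to something supported in degrees $\ge 1$, and that composing two such along a vertex and then applying another $\phi$ overshoots degree $1$ once the tree has more than two internal vertices, i.e.\ once $n>3$. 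Packaging this inductive degree estimate is the technical heart of the argument; the rest is the formal reduction to morphisms in $\cat A$ via $[\sigma]$-invariance and heredity, which is routine given the results already established.
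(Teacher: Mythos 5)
The paper's proof does not touch the tree formula at all. It is a four-line degree count carried out entirely in the homology category $\Db(\cat A)^\Z$: by additivity reduce to a chain $X_0\to X_1\to\cdots\to X_n$ of non-zero composable morphisms between indecomposables; using directedness (which follows from heredity) keep track of the unique degree in which each $X_k$ is concentrated; then $\mu_n(f_n,\dots,f_1)$, being homogeneous of degree $n-2$ more than the composite, lands in a space isomorphic to $\Ext^{n-2}_{\cat A}(X_0',X_n')$ for suitable $X_0',X_n'\in\cat A$, and this vanishes for $n>3$. The key point is that the target hom-space in $\Db(\cat A)^\Z$ is \emph{itself} zero; there is nothing to analyze tree by tree.

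Your plan takes a genuinely different route, working at the chain level of $\Pdg(\cat A)$ and trying to run a degree estimate through each summand $\mu_T$ of the Homotopy Transfer formula. Two things go wrong. First, the ``naive count'' you set up does not by itself rule out any $n$: with inputs in $\Ext^0$ or $\Ext^1$ and $\mu_n$ of total degree $n-2$, the output can still land in $\Ext^0$ or $\Ext^1$ for every $n$, so the starting estimate is not even as strong as you claim (``rules out $n>4$''). What makes the paper's count tight is that it tracks the shifts of the $X_k$, not just the $\Ext$-levels of the $f_k$, and thereby identifies the target hom-space precisely. Second, and more importantly, the pivot of your refinement --- that ``the partial composites at the internal vertices must themselves be morphisms in the orbit category, hence have components only in $\Ext^0$ and $\Ext^1$'' --- is false. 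The elements $\nu'_T=\phi\circ\nu_T$ appearing along the tree live in the dg morphism complexes $\Pdg(\cat A)(P,Q)$, not in their homology; they are not elements of $\Db(\cat A)^\Z$ and are not constrained to sit in an $\Ext$-group. The complexes $\Pdg(\cat A)(P,Q)$ are bounded (for two-term resolutions) but that is a weaker constraint, and one can check on the left-comb tree that it does not force individual $\mu_T$ to vanish for $n=4$. So the ``inductive degree estimate'' you flag as the technical heart of the argument is not merely unfinished --- as stated, it rests on a misidentification of where the intermediate terms live. You should abandon the tree-level analysis entirely and argue, as the paper does, that the target hom-space of $\mu_n$ in $\Db(\cat A)^\Z$ is already zero.
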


\begin{proof}Consider a sequence
$$\xymatrix{X_0\ar[r]^{f_1}& X_1\ar[r]^{f_2} &\cdots\ar[r]^{f_n} & X_n}$$ of non-zero composeable morphisms in $\Db(\cat A)^\Z$. By additivity of $\mu_n$, we may assume that all $X_k$ are indecomposable. Since $\cat A$ is directed, all the $f_k$ are of non-negative degree, say $d_k$. Then $X_0$ is concentrated in degree $d_0$, $X_1$ is concentrated in degree $d_0+d_1$, and in general $X_k$ is concentrated in degree $d_0+d_1+\cdots +d_k$.

Thus, $\mu_n(f_n,\dots, f_1):X_0\to X_n$ is a degree $d_1+\cdots +d_n+n-2$ morphism, and so determines a degree $0$ morphism $X_0\to X_n[-(d_1+\cdots +d_n)-n+2]$. But $X_0'=X_0[-d_0]$ and $X_n'=X_n[-(d_0+\cdots +d_n)]$ are in $\cat A$ , and so this morphism represents class in $$\Db(\cat A)(X_0', X_n'[-n+2])=\Ext^{n-2}_{\cat A}(X_0', X_n'),$$ which is $0$ whenever $n>3$.
\end{proof}

By Lemma \ref{lem:InvariantSplittings}, if $\cat A$ is hereditary the orbit category $\Db(\cat A)^\Z$ admits a $\sigma$-invariant $A_\infty$-structure. Putting Propositions \ref{prop:SuspensionInvariance}, \ref{prop:Triangles}, and \ref{prop:HighlyTrivial} together and specializing to the bounded derived category of a hereditary abelian category $\cat A$ with enough projectives we get the following. 

\begin{theorem}\label{thm:DerivedCategoryAinfty}
If $\cat A$ is a hereditary abelian category with enough projectives then the minimal $A_\infty$-structure on $\Db(\cat A)^\Z$ satisfies
\begin{enumerate}
\item the compositions $\mu_n$ are $s$-equivariant for the maps $s_X:X\to X[1]$,
\item if $X\xrightarrow{f}Y\xrightarrow{g}Z\xrightarrow{h} X[1]$ is a non-split triangle in $\Db(\cat A)$ then $\mu_3(h,g,f)=s_X$,
\item the higher compositions $\mu_n$ vanish for $n\neq2,3$. 
\end{enumerate}
\qed
\end{theorem}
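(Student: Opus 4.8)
The plan is to assemble the theorem from the three preceding propositions; the real work is to verify that their hypotheses apply to $\Db(\cat A)$ and to pin down one concrete model of the minimal $A_\infty$-structure, after which (1)--(3) can be read off directly. First I would record that a hereditary abelian category $\cat A$ with enough projectives is directed: since every indecomposable of $\Db(\cat A)$ is a shift of an object of $\cat A$, the non-vanishing of both $\Ext^n_{\cat A}(X,Y)$ and $\Ext^{-n}_{\cat A}(Y,X)$ forces $n=0$. Thus $\Db(\cat A)$ is a directed, skeletally small $\Z$-category, and $\Pdg(\cat A)$ is dg generated over degree $0$ via the canonical maps $s_X:X[1]\xrightarrow{\sim} X$, with directed degree-zero part. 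By Lemma~\ref{lem:InvariantSplittings} the $\sigma$-pseudo-skeleton $\psk(\Pdg(\cat A))$ carries $\sigma$-invariant homotopy retractions, and Theorem~\ref{thm:HomotopyTransfer} transfers its dg structure to a minimal $A_\infty$-structure $(\mu_n)_{n\geq2}$ on $\psk(H_*\Pdg(\cat A))$. By Corollary~\ref{cor:AinftyEnvelope} this pseudo-skeleton is graded-equivalent to $\Db(\cat A)^\Z$; transporting $(\mu_n)$ along the equivalence and invoking the uniqueness clause of Kadeishvili's Theorem~\ref{thm:Kadeishvili} together with Lemma~\ref{lem:smallAinfty} identifies it with the minimal $A_\infty$-structure on $\Db(\cat A)^\Z$.

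With this model fixed, statement (1) is a rephrasing of Proposition~\ref{prop:SuspensionInvariance}: the $[\sigma]$-invariance identities there become the asserted $s$-equivariance once the maps $s_X$ relating $X$ and $X[1]$ in $\Db(\cat A)^\Z$ are identified with the homology classes of the stabilization $\sigma$ (equivalently, of its inverse). Statement (2) is Proposition~\ref{prop:Triangles} verbatim. For statement (3): $\mu_1=0$ because the transferred structure on a homology category is minimal by construction, while $\mu_n=0$ for $n\geq4$ is Proposition~\ref{prop:HighlyTrivial} --- whose proof uses directedness and heredity to confine any string of $n$ composable nonzero morphisms between indecomposables to a space $\Ext^{n-2}_{\cat A}(-,-)$ that vanishes for $n>3$. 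Hence $\mu_n=0$ for all $n\neq2,3$.

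The one place that requires genuine care rather than bookkeeping is reconciling the fact that Propositions~\ref{prop:SuspensionInvariance} and~\ref{prop:HighlyTrivial} are stated on the pseudo-skeleton while statement (2) asserts a value of $\mu_3$ on a particular triangle of $\Db(\cat A)$. I expect to handle this by observing that the pseudo-skeleton inclusion is a graded equivalence onto $\Db(\cat A)^\Z$ and that, by the $s$-equivariance just established, each $\mu_n$ is determined on all of $\Db(\cat A)^\Z$ by its restriction to the pseudo-skeleton together with the distinguished maps $s_X$; in particular $\mu_3(h,g,f)=s_X$ is independent of the chosen representative of the isomorphism class of $X$, so (2) is well-posed and the theorem follows without further computation.
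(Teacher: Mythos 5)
Your proposal is correct and follows essentially the same route as the paper: the theorem is assembled directly from Lemma~\ref{lem:InvariantSplittings}, Propositions~\ref{prop:SuspensionInvariance}, \ref{prop:Triangles}, and \ref{prop:HighlyTrivial}, once one checks that a hereditary abelian category with enough projectives yields a directed, skeletally small $\Z$-category so that the pseudo-skeleton machinery and invariant retractions apply. Your final paragraph's concern about transporting $\mu_n$ from the pseudo-skeleton to arbitrary objects of $\Db(\cat A)^\Z$ via $s$-equivariance is a legitimate point that the paper leaves implicit, and your resolution is the right one.
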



\section{Acyclic Ginzburg algebras}\label{sec:Ginzburg}

The covering theory of Section \ref{sec:Covering} will allow us to compute the minimal model for the Ginzburg algebra $\Gamma_Q$ using the $A_\infty$-structure of $\Db(Q)^\Z$ as constructed in Theorem \ref{thm:DerivedCategoryAinfty}.

\subsection{The derived translation algebra}

In order to relate the $A_\infty$-structure of $\Gamma_Q$ to $\Db(Q)^\Z$, we will need the following algebra.

\begin{definition}
The \emph{derived translation algebra} of $Q$ is the algebra 
$$U_Q=\bigoplus_{n\geq 0}\Db(Q)^\Z(\k Q,\tau^{-n}\k Q)$$
where the product of $f:\k Q\to\tau^{-p}\k Q$ and $g:\k Q\to\tau^{-q}\k Q$ is given by $$f\cdot g=\tau^{-q}(f)\circ g.$$
\end{definition}

The derived translation algebra is naturally bigraded: in addition to grading by morphism degree, we define the \emph{weight} of a morphism $f:\k Q\to\tau^{-n}\k Q$ to be $n$.

The isomorphism $\k Q=\Db(Q)(\k Q,\k Q)$ makes $\k Q$ a subalgebra of $U_Q$. This endows the derived translation algebra with the structure of a graded right $\k Q$-module.

\begin{proposition}\label{prop:TotalDegree}
Let $U^{\tot}_Q$ denote the (singly) graded $\k Q$-module given by considering $U_Q$ as graded by total degree, and set $F=\tau^-[1]$. Then there is a graded isomorphism of $\k Q$-modules $$U^{\tot}_Q\cong\bigoplus_{n\geq 0}F^n(\k Q)$$ where the right-hand side is graded by the usual degree of chain complexes. Moreover, under this isomorphism the submodule $F^n(\k Q)$ maps to the weight $n$ component of $U^{\tot}_Q$.
\end{proposition}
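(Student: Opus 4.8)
The plan is to unwind the definitions and match graded pieces weight-by-weight. First I would fix $n\geq 0$ and analyze the weight $n$ component of $U_Q$, which by definition is $\Db(Q)^\Z(\k Q,\tau^{-n}\k Q)$. The key point is that the orbit category morphism space decomposes as $\Db(Q)^\Z(\k Q,\tau^{-n}\k Q)=\bigoplus_{m\in\Z}\Db(Q)(\k Q,\tau^{-n}\k Q[m])$, with the summand indexed by $m$ sitting in morphism-degree $m$. Since $\Gamma_Q$ is being studied over an acyclic quiver, $\k Q$ is hereditary and $\tau^- = \mathrm{Hom}_{\k}(-,\k)\otimes^{\mathbf L}\cdots$ — more precisely I would use that the Auslander–Reiten translate on $\Db(Q)$ satisfies $\tau^{-n}\k Q[m] = F^n(\k Q)[m-n]$ where $F=\tau^-[1]$, simply because $F^n = \tau^{-n}[n]$ by definition and everything commutes. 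Thus $\Db(Q)(\k Q,\tau^{-n}\k Q[m]) = \Db(Q)(\k Q, F^n(\k Q)[m-n])$.

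Next I would identify these Hom spaces with graded pieces of $F^n(\k Q)$. Because $\k Q$ is a projective generator and $F^n(\k Q)$ is a bounded complex of projectives (Serre duality / tilting: $F$ preserves $\Pdg(Q)$ up to quasi-isomorphism, as $\tau^-$ is computed via the Nakayama functor on projectives), we have $\Db(Q)(\k Q, F^n(\k Q)[j]) \cong H^{-j}(F^n(\k Q))$, i.e. the degree-$j$ homology — equivalently, viewing $F^n(\k Q)$ as a graded $\k Q$-module via its homology (it is formal since $\k Q$ is hereditary, or one replaces it by its minimal projective resolution), the space $\Db(Q)(\k Q, F^n(\k Q)[j])$ is the degree-$(-j)$ part of $F^n(\k Q)$ as a complex. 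Setting $m-n = -j$, i.e. $j = n-m$, the morphism-degree-$m$ part of the weight $n$ component of $U_Q$ is the degree-$(m-n)$ part of $F^n(\k Q)$, hence the total degree (morphism degree $=m$ here, since weight is already folded into $F^n$) lands in the right spot: assembling over all $m$ gives an isomorphism of the weight $n$ component of $U_Q^{\tot}$ with $F^n(\k Q)$, graded by chain-complex degree. Summing over $n\geq 0$ and checking that the $\k Q$-module structure on the left (induced by $\k Q = \Db(Q)(\k Q,\k Q)\subseteq U_Q$ acting by precomposition) matches the evident right $\k Q$-action on each $F^n(\k Q)$ — this is a direct naturality check — completes the argument.

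The main obstacle will be making the identification $\tau^{-n}\k Q \simeq F^n(\k Q)[-n]$ and the hereditariness-driven formality precise enough that the gradings line up without sign or off-by-one errors: one must be careful that $F = \tau^-[1]$ is a \emph{degree-$0$} equivalence of $\Db(Q)$ (it is, being a composite of an equivalence with a shift), so that $F^n(\k Q)$ is again a genuine object of $\Db(Q)$ whose internal grading is the chain-complex grading, and then track how the shift $[m]$ in the orbit-category decomposition converts into that internal grading. A secondary, more bookkeeping-level point is verifying that $F$ restricts to an endofunctor of $\Pdg(Q)$ up to quasi-isomorphism so that $\Db(Q)(\k Q, F^n(\k Q)[j])$ really computes the homology of $F^n(\k Q)$ in the claimed degree; this follows from the fact that $\tau^-$ is computed by the inverse Nakayama functor $\nu^{-1} = \mathrm{Hom}_{\k Q}(D\k Q, -)$ on projectives, which sends projectives to projectives, so $F^n(\k Q)$ is a bounded complex of projectives and Hom out of the projective generator $\k Q$ is exact.
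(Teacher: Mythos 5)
Your approach is essentially the paper's: both decompose $\Db(Q)^\Z(\k Q,\tau^{-n}\k Q)$ over shift indices, invoke hereditariness of $\k Q$ to identify each piece with the homology of $F^n(\k Q)$, and reassemble, with the only cosmetic difference being that the paper first decomposes into indecomposable projectives $P_i$ and notes each $\tau^{-n}P_i$ is a shift of a module (so each orbit-Hom space sits in a single degree) rather than invoking $\Db(Q)(\k Q,X[j])\cong H_{-j}(X)$ for the aggregate complex. Do watch the index bookkeeping in your last step (with your setup $j$ should be $m-n$, not $n-m$, and the $H^{-j}$ should be $H^{j}=H_{-j}$), which you correctly flag as the one delicate point.
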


\begin{proof}
Denote by $P_i$ the indecomposable projective $\k Q$-module corresponding to the vertex $i$. Since
$$\Db(Q)^\Z(\k Q,\tau^{-n}\k Q)=\bigoplus_{i\in Q_0}\Db(Q)^\Z(\k Q,\tau^{-n}P_i)$$ it suffices to compute the graded $\k Q$-module structure of $\Db(Q)^\Z(\k Q,\tau^{-n}P_i)$.

Since $\k Q$ is hereditary, there are integers $n_0$ and $d$ such that $\tau^{-n}P_i$ is quasi-isomorphic to $\tau^{-n_0}P_i[d]$ with $\tau^{-n_0}P_i$ concentrated in degree 0. (If $Q$ is not Dynkin, then $n_0=n$ and $d=0$.) Hence $\Db(Q)^\Z(\k Q,\tau^{-n}P_i)$ is concentrated in degree $-d$. Moreover there are isomorphisms
$$\Db(Q)^\Z_{-d}(\k Q,\tau^{-n}P_i)=\Db(Q)(\k Q,\tau^{-n}P_i[-d])
=\Db(Q)(\k Q,\tau^{-n_0}P_i)=\tau^{-n_0}P_i.$$ 

But the complex $\Db(Q)^\Z_{-d}(\k Q,\tau^{-n}P_i)$ lives in degree $-d$, so $\Db(Q)^\Z(\k Q,\tau^{-n}P_i)$ is isomorphic to $\tau^{-n_0}P_i[d]=\tau^{-n}P_i$. Therefore the summand $\Db(Q)^\Z(\k Q,\tau^{-n}P_i)$ lies in total degree $n-d$, and so $\Db(Q)^\Z(\k Q,\tau^{-n}P_i)=F^nP_i$ as graded modules.
\end{proof}

\begin{remark}
The proof of Proposition \ref{prop:TotalDegree} actually shows that $U_Q$ is a preprojective algebra for $\Db(Q)$ in the sense that it is a graded algebra containing $\k Q$ as a subalgebra, and it splits as a direct sum into all indecomposable \emph{preprojective} objects in the derived category $\Db(Q)$. If $Q$ is not Dynkin then every preprojective object of $\Db(Q)$ is in fact a preprojective module (\emph{i.e.}, concentrated in degree 0).
\end{remark}

By Corollary \ref{cor:AinftyEnvelope} the algebra $U_Q$ is the homology of the dg algebra $$\bigoplus_{n\ge0}\Pdg(Q)(kQ,\tau^{-n}kQ)$$ and hence is naturally a minimal $A_\infty$-algebra by Kadei\v{s}vili's Theorem. (Here we really mean the quasi-isomorphic algebra obtained by replacing each $\tau^{-n}kQ$ by a chosen projective resolution.) 

The identity morphisms $id_X:X\to X$ give rise to canonical degree 1 maps $\sigma_X:X[1]\to X$ in $\Pdg(Q)$ and hence give a degree 1 natural equivalence $\sigma:[1]\xrightarrow{\sim}\Id$. Denote by $s_X$ the image of the degree $-1$ map $\sigma^{-1}_X:X\to X[1]$ in $U_Q$.

\begin{proposition}\label{prop:U}
The $A_\infty$-algebra $U_Q$ has the following properties:
\begin{enumerate}
\item the compositions $\mu_n$ are $s_X$-invariant,
\item if $f$, $g$, and $h$ are homogeneous of degree 0 and form a non-split distinguished triangle in $\Db(Q)$ then $\mu_3(h,g,f)=s_X$,
\item $\mu_n=0$ for $n\neq 2,3$.
\end{enumerate}
\end{proposition}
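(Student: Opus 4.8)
The plan is to deduce Proposition \ref{prop:U} directly from Theorem \ref{thm:DerivedCategoryAinfty}. The point is that the dg algebra $\bigoplus_{n\ge0}\Pdg(Q)(\k Q,\tau^{-n}\k Q)$ is assembled from a full sub-structure of the dg category $\Pdg(Q)$, so its minimal model is computed by restricting the minimal $A_\infty$-structure on $H_*\Pdg(Q)\simeq\Db(Q)^\Z$ (Corollary \ref{cor:AinftyEnvelope}). First I would identify $U_Q$ as the associated $A_\infty$-algebra $\k[\cat U]$ of the full dg subcategory $\cat U$ of $\Pdg(Q)$ on the objects $\{\tau^{-n}\k Q\}_{n\ge0}$ (after replacing each $\tau^{-n}\k Q$ by a fixed projective resolution); by Lemma \ref{lem:smallAinfty}, the homology $A_\infty$-algebra $H_*\k[\cat U] = U_Q$ is a minimal model for $\k[\cat U]$, and its higher compositions are exactly the restrictions of the $\mu_n$ on $H_*\cat U\subseteq H_*\Pdg(Q)$ to the relevant morphism spaces. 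One must check that the product on $U_Q$ defined by $f\cdot g = \tau^{-q}(f)\circ g$ agrees with $\mu_2$ under this identification, which is immediate since $\mu_2$ is ordinary composition of morphisms in the orbit category and the weight grading matches the $\tau$-twist.

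With this translation in hand, each of the three items is a specialization of the corresponding item of Theorem \ref{thm:DerivedCategoryAinfty}, noting that $\Mod\text{-}\k Q$ is hereditary with enough projectives so the theorem applies. For (3), $\mu_n = 0$ for $n\neq 2,3$ on all of $\Db(Q)^\Z$, hence in particular on the morphism spaces $\Db(Q)^\Z(\k Q,\tau^{-n}\k Q)$. For (1), $s_X$-equivariance of the $\mu_n$ on $\Db(Q)^\Z$ restricts to $s$-invariance of the $\mu_n$ on $U_Q$; here I would spell out that the maps $s_X$ used in Proposition \ref{prop:U} are precisely (the homology classes of) the $\sigma_X^{-1}$, which are the stabilization isomorphisms $s$ of Theorem \ref{thm:DerivedCategoryAinfty}, and that the three equivariance identities of Proposition \ref{prop:SuspensionInvariance} give exactly the asserted invariance once one observes $\cat U$ is closed under the shift $[1] = S^\Z$ up to the objects appearing (a minor bookkeeping point, since $\tau^{-n}\k Q[1]$ need not literally be one of the chosen representatives — one uses the $\sigma$-invariance of the homotopy retractions from Lemma \ref{lem:InvariantSplittings}, or equivalently works in the $\sigma$-pseudo-skeleton). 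For (2), if $f,g,h$ are degree-$0$ morphisms forming a non-split triangle $X\to Y\to Z\to X[1]$ in $\Db(Q)$ with all three objects among the $\tau^{-n}\k Q$, then Proposition \ref{prop:Triangles} gives $\mu_3(h,g,f) = s_X$ verbatim.

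The main obstacle — really the only genuine point requiring care — is the passage between the \emph{category-level} statement (Theorem \ref{thm:DerivedCategoryAinfty} concerns all of $\Db(Q)^\Z$, built from a pseudo-skeleton of $\Pdg(Q)$) and the \emph{algebra-level} statement about $U_Q$, which is built from the specific objects $\tau^{-n}\k Q$ and their specific projective resolutions. I would handle this by invoking Lemma \ref{lem:smallAinfty}: the minimal model of $\k[\cat U]$ is unique up to $A_\infty$-isomorphism, and one such model is obtained by Homotopy Transfer applied to $\sigma$-invariant retractions as in Lemma \ref{lem:InvariantSplittings}, whose transferred structure is $[\sigma]$-invariant by Proposition \ref{prop:SuspensionInvariance}. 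Since all of (1)–(3) are properties stable under $A_\infty$-isomorphism that fixes the $\mu_2$-structure and the classes $s_X$ (vanishing of $\mu_n$ for $n\neq 2,3$ is preserved, and $s$-equivariance/the triangle identity can be checked after transport of structure), the proposition follows. In short, the proof is: reinterpret $U_Q$ as $\k[\cat U]$, apply Lemma \ref{lem:smallAinfty} and then read off each clause from Theorem \ref{thm:DerivedCategoryAinfty}.
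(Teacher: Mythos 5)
Your proposal is correct and follows essentially the same route as the paper: identify $U_Q$ with (the homology of) the associated algebra of the full dg subcategory of $\Pdg(Q)$ on projective resolutions of the $\tau^{-n}\k Q$, invoke Lemma \ref{lem:smallAinfty}, and read off all three clauses from Theorem \ref{thm:DerivedCategoryAinfty}. The paper states this more tersely (choosing one resolution of each indecomposable $\tau^{-n}P_i$ rather than of $\tau^{-n}\k Q$), whereas you additionally unfold the bookkeeping around $\sigma$-invariant retractions and pseudo-skeleta that Theorem \ref{thm:DerivedCategoryAinfty} already packages; the underlying argument is the same.
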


\begin{proof}
Consider the full subcategory $p(Q)$ of $\Pdg(Q)$ whose objects are given by choosing a fixed projective resolution for each of the objects $\tau^{-n}P_i$ with $n\ge 0$. It is a small dg category.
Since the homology $H_*p(Q)$ is a full subcategory of $\Db(Q)^\Z$, its $A_\infty$-structure satisfies the properties of the proposition by Theorem \ref{thm:DerivedCategoryAinfty}. Clearly the associated algebra $\k[p(Q)]$ is dg isomorphic to $\bigoplus_{n\ge 0}\Pdg(Q)(kQ,\tau^{-n}kQ)$ and hence its homology is $A_\infty$-isomorphic to $U_Q$. The proposition follows by applying Lemma \ref{lem:smallAinfty}.
\end{proof}

\subsection{Acyclic Ginzburg Algebras}\label{sec:GinzburgAlgebra}
Recall that the Ginzburg algebra can be endowed with a bigrading such that the arrows $\alpha$, $\alpha^*$ and $t_i$ have bidegrees $(0,0)$, $(1,0)$ and $(1,1)$ respectively. We call the first and second components of the bidegree of a homogeneous element $\gamma$ the \emph{weight} and \emph{degree} of $\gamma$ respectively, and denote the weight of $\gamma$ by $\wt(\gamma)$ and total degree by $|\gamma|$.

We will view the Ginzburg algebra as graded by degree (not total degree), where each degree homogeneous component is itself graded by weight. That is,
\begin{equation}\label{eqn:Grading}
\Gamma_Q=\bigoplus_{n\geq 0}(\Gamma_Q)_{*,n}
\end{equation} 
with $(\Gamma_Q)_{*,n}=\bigoplus_{k\geq 0}(\Gamma_Q)_{k,n}$ and $\Gamma_{k,n}$ is the subspace spanned by the paths in $\hater{Q}$ of weight $k$ and degree $n$.

The differential of $\Gamma_Q$ has bidegree $(0,-1)$ so the grading \eqref{eqn:Grading} descends to a grading in homology 
$$H_*\Gamma_Q=\bigoplus_{n\geq 0}H_{*,n}\Gamma_Q$$
where $H_{*,n}\Gamma_Q$ is the subspace of degree $n$ homology classes. It is itself graded by weight. Note that with this bigrading $$H_{*,0}\Gamma_Q=\k\barer{Q}/(\rho_i:i\in Q_0)=\Lambda_Q$$ so the homology $H_*\Gamma_Q$ contains the preprojective algebra in degree $0$ (\emph{cf.} Section \ref{sec:Dynkin}). 

We now turn to our main technical result:

\begin{theorem}\label{thm:GinzburgDerivedTranslationAlgebra}
For acyclic $Q$, there is a bigraded quasi-isomorphism 
\[
\Gamma_Q\to\bigoplus_{n\ge0}\Pdg(Q)(kQ,\tau^{-n}\k Q)
\]
inducing a bigraded $\k$-algebra isomorphism $H_*\Gamma_Q\xrightarrow{\sim} U_Q$. In particular, the $A_\infty$-structure on $H_*\Gamma_Q$ arising from Kadei\v{s}vili's Theorem is $A_\infty$-quasi-isomorphic to the $A_\infty$-structure on $U_Q$ arising from Proposition \ref{prop:U}.

\end{theorem}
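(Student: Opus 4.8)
The plan is to build the quasi-isomorphism directly from the combinatorial structure of $\Gamma_Q$, using the differential graded Galois covering theory of Section~\ref{sec:Covering}. The key observation is that the bigraded quiver $\hater Q$ defining $\Gamma_Q$ is an orbit quiver: there is a natural $\Z$-action on an infinite ``unrolled'' quiver $\tilder\Gamma_Q$ whose orbit quiver is $\hater Q$, and whose path category $\GG$ is a small dg category. The weight grading on $\Gamma_Q$ is exactly the grading coming from this $\Z$-covering, so producing a \emph{bigraded} map amounts to producing a dg functor on $\GG$ that is compatible with the covering in the sense of Proposition~\ref{prop:SectionIsomorphism}. I would first make the $\Z$-covering explicit: the vertices of $\tilder\Gamma_Q$ are pairs $(i,n)$ with $i\in Q_0$, $n\in\Z$; for each arrow $\alpha:i\to j$ of $Q$ there is a degree-$0$ arrow $(i,n)\to(j,n)$ and a degree-$1$ arrow $(j,n)\to(i,n-1)$; and for each vertex a degree-$2$ loop-like arrow $(i,n)\to(i,n-1)$, with differential mirroring $\rho_i$. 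The $\Z$-action by $n\mapsto n+1$ is free, dg, and its orbit quiver recovers $(\hater Q, d)$, so by the covering-to-quotient recovery (Proposition~\ref{prop:SectionIsomorphism}, and its dg/homology refinements Propositions~\ref{prop:CoveringIdeals}, \ref{prop:CoveringHomology}) it suffices to construct a quasi-fully-faithful dg functor $R:\GG\to\Pdg(Q)$ that is $\Z$-equivariant, intertwining the covering $\Z$-action with the shift-plus-$\tau^{-1}$ action $F=\tau^-[1]$ on $\Pdg(Q)$.

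**Constructing $R$.** The functor $R$ should send the vertex $(i,n)$ to a fixed projective resolution of $\tau^{-n}P_i$ (for $n\ge 0$; for $n<0$ one uses the analogous injective-type object or simply notes these land outside the relevant range after taking a section). On arrows: the degree-$0$ arrows of $Q$ map to the maps between projectives induced by the arrows acting on $\k Q$; the degree-$1$ dual arrows $\alpha^*$ map to connecting/extension maps realizing the almost-split structure; and the degree-$2$ loops $t_i$ map to a chosen nullhomotopy witnessing that $\rho_i$ becomes a boundary in $\Pdg(Q)$, which is forced because $\rho_i$ is precisely the element that represents the boundary of the almost-split triangle ending at $P_i$ (equivalently the defining relation of the preprojective algebra $\Lambda_Q = H_{*,0}\Gamma_Q$). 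The compatibility $R\circ d = d\circ R$ on these generators is then a finite check on each vertex, reducing to the statement that $d t_i = \rho_i$ maps to the correct boundary. Equivariance with respect to the $\Z$-action is built in by defining $R$ on the fundamental domain $n\ge 0$ and extending by $R((i,n)) = F^{\,m}R((i,n-m))$.

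**From fully faithful to quasi-isomorphism.** Having built $R$, I would show it is quasi-fully-faithful, i.e.\ each chain map $\GG((i,m),(j,n)) \to \Pdg(Q)(\tau^{-m}P_i, \tau^{-n}P_j)$ is a quasi-isomorphism. This is the heart of the argument. On homology it asserts that $H_*\GG((i,m),(j,n)) \cong \Db(Q)^\Z(\tau^{-m}P_i,\tau^{-n}P_j)$, and here one uses that $\k Q$ is hereditary so the only homology of $\GG((i,m),(j,n))$ is controlled by $\Hom$ and $\Ext^1$; the degree-$0$ part reproduces $\Hom_{\k Q}$, the degree-$1$ part (built from the $\alpha^*$) reproduces $\Ext^1$, the degree-$2$ part is killed by the $t_i$-differential (this is exactly where $d t_i = \rho_i$ is used), and all higher degrees vanish. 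Taking the homology of the associated algebra and applying the covering recovery to a section $S$ of the $\Z$-cover gives $H_*\k[\GG^S] \cong \k[H_*\GG^S] \cong \bigoplus_{n\ge0}\Db(Q)^\Z(\k Q,\tau^{-n}\k Q) = U_Q$, while on the other side the section recovery identifies $\k[\GG^S]$ with $\Gamma_Q$ as a bigraded dg algebra. Composing yields the bigraded quasi-isomorphism $\Gamma_Q \to \bigoplus_{n\ge0}\Pdg(Q)(\k Q,\tau^{-n}\k Q)$ and hence the bigraded algebra isomorphism $H_*\Gamma_Q \xrightarrow{\sim} U_Q$. The final $A_\infty$ statement is then automatic: a quasi-isomorphism of dg algebras induces an $A_\infty$-quasi-isomorphism of their minimal models by Kadei\v{s}vili's Theorem (uniqueness up to $A_\infty$-isomorphism), and the minimal model of the right-hand side is the $A_\infty$-algebra of Proposition~\ref{prop:U}.

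**Main obstacle.** I expect the genuine difficulty to be twofold and concentrated in the construction of $R$ rather than the formal covering bookkeeping. First, one must choose the nullhomotopies $R(t_i)$ and the extension maps $R(\alpha^*)$ \emph{coherently} so that $R$ is an honest dg functor — i.e.\ strictly multiplicative, not merely up to homotopy — which requires exploiting the almost-split sequences of $\k Q$ in a sufficiently rigid/functorial way (this is presumably where a pseudo-skeleton as in Section~\ref{sec:DerivedCategories} and careful choices of projective resolutions are needed). Second, verifying quasi-full-faithfulness in all internal degrees at once — in particular that the $t_i$ exactly kill the degree-$2$ homology and nothing survives in degrees $\ge 2$ — relies essentially on heredity of $\k Q$ and on the identification of $\rho_i$ with the boundary map of the almost-split triangle; getting the signs and the weight-grading to line up is the technical core.
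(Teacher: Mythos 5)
Your architecture is exactly right: build a dg Galois $\Z$-covering $\GG\to\cat C_{\hater Q}$ realizing the weight grading, construct a quasi-fully-faithful dg functor $R:\GG\to\Pdg(Q)$ intertwining the $\Z$-action on $\GG$ with $\tau^-[1]$, and then apply the section-category machinery of Section~\ref{sec:Covering} to descend to a bigraded quasi-isomorphism $\Gamma_Q\to\bigoplus_{n\ge 0}\Pdg(Q)(\k Q,\tau^{-n}\k Q)$. The final $A_\infty$ step via Kadei\v{s}vili uniqueness is also correct.

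The gap is in the construction of $R$, and you have in fact put your finger on it yourself in your ``main obstacle'' paragraph — but the difficulty you identify is a real dead end for the approach you propose, not a technical nuisance to be overcome. You propose to first fix a projective resolution of $\tau^{-n}P_i$ for each $(i,n)$, then define $R$ on arrows by choosing chain maps (for the $\alpha$'s), extension maps (for the $\alpha^*$'s), and nullhomotopies (for the $t_i$'s). With this plan, making $R$ strictly multiplicative — i.e., a genuine dg functor rather than one ``up to homotopy'' — requires coherently compatible choices across the entire AR-quiver of $\Db(Q)$, and there is no reason for such coherent choices to exist; at best you would get an $A_\infty$-functor, which is too weak for the section-recovery argument. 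The key idea you are missing is that one should \emph{not} choose the resolutions or the chain maps by hand: define $R(x)=\GG(-,x)\circ\iota$, where $\iota:\cat C_Q\to\GG$ is the embedding onto the slice $(i,0)$. This is a representable functor, so it is automatically a strict dg functor with no coherence choices required. The nontrivial content then shifts to \emph{proving}, by a mapping-cone induction on path length (using the short exact sequence $0\to\bigoplus_{f:y\to x}\GG(-,y)\to\GG(-,x)\to\GG(-,\tau x)[1]\to 0$), that $R(x)$ is in fact a projective resolution of $\tau^nP_i$ — and then proving quasi-full-faithfulness by the induced long exact sequence and the Five Lemma, anchored at degree $0$ by Happel's theorem. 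Your sketch of why heredity controls the degree-$2$ and higher homology is pointing in the right direction, but the Yoneda-style definition of $R$ is the ingredient that makes the whole argument go through.
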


Modulo the proof of Theorem \ref{thm:GinzburgDerivedTranslationAlgebra}, we can compute the minimal model of $\Gamma_Q$ for $Q$ a non-Dynkin quiver.

\begin{corollary}[See \cite{Keller3} Section 4.2]\label{cor:GinzburgNonDynkin}
If $Q$ is a non-Dynkin quiver, then the homology $H_*\Gamma_Q$ is isomorphic to the preprojective algebra $\Lambda_Q$ of $Q$, and $\Gamma_Q$ is a formal $A_\infty$-algebra.
\end{corollary}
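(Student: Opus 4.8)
The plan is to deduce Corollary~\ref{cor:GinzburgNonDynkin} from Theorem~\ref{thm:GinzburgDerivedTranslationAlgebra} together with the structural description of $U_Q$ in Propositions~\ref{prop:TotalDegree} and~\ref{prop:U}. By Theorem~\ref{thm:GinzburgDerivedTranslationAlgebra} there is a bigraded quasi-isomorphism $\Gamma_Q\to\bigoplus_{n\ge0}\Pdg(Q)(\k Q,\tau^{-n}\k Q)$ inducing a bigraded algebra isomorphism $H_*\Gamma_Q\xrightarrow{\sim}U_Q$, so it suffices to analyze $U_Q$ when $Q$ is non-Dynkin.

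First I would observe that the key input is the parenthetical remark in the proof of Proposition~\ref{prop:TotalDegree}: when $Q$ is not Dynkin, every indecomposable preprojective object $\tau^{-n}P_i$ of $\Db(Q)$ is quasi-isomorphic to an actual module concentrated in degree $0$ (that is, in the notation of that proof, $n_0=n$ and $d=0$). Consequently each graded piece $\Db(Q)^\Z(\k Q,\tau^{-n}\k Q)$ is concentrated in morphism-degree $0$, and therefore $U_Q=\bigoplus_{n\ge0}\Db(Q)(\k Q,\tau^{-n}\k Q)=\bigoplus_{n\ge0}\Hom_{\k Q}(\k Q,\tau^{-n}\k Q)$ is concentrated in degree $0$. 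Transporting this along the isomorphism of Theorem~\ref{thm:GinzburgDerivedTranslationAlgebra} shows that $H_*\Gamma_Q$ is concentrated in degree $0$, where by the remark preceding the theorem $H_{*,0}\Gamma_Q=\Lambda_Q$; hence $H_*\Gamma_Q\cong\Lambda_Q$ as (bi)graded algebras. This also identifies $U_Q$ with the preprojective algebra $\Lambda_Q$, recovering the classical fact that $\bigoplus_{n\ge0}\Hom_{\k Q}(\k Q,\tau^{-n}\k Q)$ is the preprojective algebra in the non-Dynkin case (cf.\ \cite{Keller3}).

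For formality, I would invoke Proposition~\ref{prop:U}: the $A_\infty$-structure on $U_Q$ has $\mu_n=0$ for $n\neq2,3$, so only $\mu_3$ could obstruct formality. But $\mu_3$ raises morphism-degree by $1$, and since $U_Q$ is concentrated in degree $0$ there is no room for a nonzero degree-$1$ output: $\mu_3\colon U_Q^{\otimes3}\to U_Q$ lands in the degree-$1$ part of $U_Q$, which is zero. Hence $\mu_3=0$ as well, so $U_Q$ is formal as an $A_\infty$-algebra, and therefore so is $\Gamma_Q$ (formality being preserved under $A_\infty$-quasi-isomorphism, as guaranteed by the last clause of Theorem~\ref{thm:GinzburgDerivedTranslationAlgebra}). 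Since the minimal model of $\Gamma_Q$ is then $\Lambda_Q$ with vanishing higher products, $\Gamma_Q$ is quasi-isomorphic to $\Lambda_Q$, as claimed.

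There is essentially no obstacle here beyond bookkeeping: the entire content has been front-loaded into Theorem~\ref{thm:GinzburgDerivedTranslationAlgebra} and Proposition~\ref{prop:U}. The one point requiring a little care is the degree conventions — making sure that "degree" in the bigrading of $\Gamma_Q$ matches "morphism degree" in $\Db(Q)^\Z$ under the isomorphism of the theorem, so that "concentrated in degree $0$" on the $U_Q$ side really does force the higher composition $\mu_3$ (which has degree $+1$ as an $A_\infty$-operation) to vanish. Once that correspondence is pinned down, the vanishing of $\mu_{\ge3}$ and the identification with $\Lambda_Q$ are immediate.
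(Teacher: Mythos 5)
Your proof is correct and takes essentially the same approach as the paper: apply Theorem~\ref{thm:GinzburgDerivedTranslationAlgebra} to get $H_*\Gamma_Q\cong U_Q$, observe that for non-Dynkin $Q$ each $\tau^{-n}\k Q$ is concentrated in degree $0$ so $U_Q\cong\Lambda_Q$, and conclude formality from a degree count. The only stylistic difference is that you first invoke Proposition~\ref{prop:U} to kill $\mu_n$ for $n\neq 2,3$ before arguing $\mu_3=0$ by degree; the paper skips this step and simply notes that $\mu_n$ has degree $n-2$ and therefore vanishes for all $n\neq 2$ since its image would lie in a zero graded piece, which handles $\mu_3$ and all higher $\mu_n$ in one stroke without needing Proposition~\ref{prop:U}.
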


\begin{proof}
By Theorem \ref{thm:GinzburgDerivedTranslationAlgebra} the homology $$H_*\Gamma_Q\cong U_Q=\bigoplus_{n\geq 0}\Db(Q)^\Z(\k Q,\tau^{-n}\k Q).$$ Since $Q$ is not Dynkin, $\tau^{-n}\k Q$ is quasi-isomorphic to a complex concentrated in degree 0, so $$\Db(Q)^\Z(\k Q,\tau^{-n}\k Q)=\Db(Q)(\k Q,\tau^{-n}\k Q)\cong\tau^{-n}\k Q$$ as $\k Q$-modules. Thus, $H_*\Gamma_Q$ is concentrated in degree 0 and hence is isomorphic to $\Lambda_Q$ as a graded algebra.

The $A_\infty$-structure maps $\mu_n$ have degree $n-2$, and hence are zero for $n\neq 2$, taking image in the 0 vector space.
\end{proof}

\subsection{Proof of Theorem \ref {thm:GinzburgDerivedTranslationAlgebra}.} 

Denote by $\cat G=\cat C_{\hater{Q}}$ the dg path category of $\hater{Q}$. The idea of the proof is to construct a Galois dg $\Z$-cover $\GG$ of $\cat G$ and a dg functor $R:\GG\to\Pdg(Q)$ inducing a quasi-isomorphism between $\k[\GG^S]\cong\Gamma_Q$ and $\bigoplus_{n\ge0}\Pdg(Q)(kQ,\tau^{-n}kQ)$, where $S$ is a section of the covering. The cover $\GG$ is closely related to the \emph{mesh category} of $Q$ whose construction we now recall.

\begin{definition}
The \emph{repetitive quiver} of an acyclic quiver $Q$ is the bigraded quiver $Q\times\Z$ with vertex set $Q_0\times\Z$ and two families of arrows:
\begin{enumerate}
\item For each $\alpha:i\to j$ in $Q$ and $n\in\Z$ there is an arrow $(\alpha,n):(i,n)\to (j,n)$ of bidegree $(0,0)$,
\item and for each $\alpha:i\to j$ in $Q$ and $n\in\Z$ there is an arrow $(\alpha^*,n):(j,n)\to (i,n+1)$ of bidegree $(1,0)$.
\end{enumerate}
\end{definition}

The first component of the bidegree will be referred to as the \emph{weight} and we denote the weight of an arrow $\gamma$ by $\wt(\gamma)$. The second component of the bidegree at the moment carries no additional information. In the sequel, we will construct honest bigraded quivers obtained from $Q\times\Z$ by adjoining extra arrows. We denote the total degree of an arrow $\gamma$ by $|\gamma|$; at the moment $|\gamma|=\wt(\gamma)$.  

The repetitive quiver has an (orientation preserving) automorphism $\tau$ defined by  
$$\tau(i,n)=(i,n+1) \qquad \text{and} \qquad \tau(\alpha,n)=(\alpha, n+1)$$
and an automorphism $\sigma$ of the arrow set uniquely determined by the property that $\sigma\gamma:\tau y\to x$ for an arrow $\gamma:x\to y$.  The automorphism $\tau$ is a bigraded automorphism of $Q\times\Z$, while $\sigma$ satisfies the identity $|\sigma\gamma|+|\gamma|=1$.

Let $\cat I_Q$ be the ideal of the path category $\cat C_{Q\times\Z}$ generated by the \emph{mesh relators}
$$\rho_x=\sum_{f:y\to x}(-1)^{|f|}f\circ (\sigma f)$$ for $x\in Q_0\times\Z$. The \emph{mesh category} of $Q$ is the quotient category $$\HH=\cat C_{Q\times\Z}/\cat I_Q.$$ 

\begin{theorem*}[Happel \cite{Happel2}]\label{thm:Happel}
There is a fully faithful functor 
$$h:\HH\hookrightarrow\ind\Db(Q)$$ 
where $\ind\Db(Q)$ is the full subcategory of $\Db(Q)$ obtained by choosing representatives from each isomorphism class of indecomposable objects. For every object $x$, the sequence
$$\xymatrix{h(\tau x)\ar[r] & \bigoplus_{f:y\to x} h(y) \ar[r] & h(x)\ar[r] & h(\tau x)[1]}$$
is a triangle. Moreover, $h$ is an equivalence of categories if and only if $Q$ is Dynkin, and in general, the image of $h$ is the transjective component of $\Db(Q)$, i.e., those objects $X$ such that $\tau^{n}X$ is projective for some $n\in\Z$.
\end{theorem*}

Up to isomorphism, the functor $h$ can be described explicitly by introducing coordinate functions on the objects of $Q\times\Z$. Denote by $q:Q_0\times\Z\to Q_0$ and $\ell:Q_0\times\Z\to\Z$ the projections onto the first and second coordinates respectively. With this notation, the functor $h:\HH\to\Db(Q)$ sends an object $x$ to $\tau^{\ell (x)}P_{q(x)}$.

We are now ready to construct the covering category $\GG$. Let $\hater{Q}\times\Z$ be the quiver obtained from the repetitive quiver $Q\times\Z$ by adjoining arrows $(t_i,n):(i,n)\to (i,n+1)$ of bidegree $(1,1)$. The automorphism $\tau$ of $Q\times\Z$ extends to $\hater{Q}\times\Z$ by setting $\tau(t_i,n)=(t_i,n+1)$. There is a unique degree $1$ arrow $t_x:x\to \tau x$ for each vertex $x$ of $\hater{Q}\times\Z$. The quiver $\hater{Q}\times\Z$ is a dg quiver with differential determined by 
\begin{align*}
d\gamma=& \ 0 & \text{for } & \gamma:x\to y  \text{ in } Q\times\Z \\
dt_x=& \ \rho_x=\sum_{\substack{\gamma:x\to y \\ \text{in } Q\times\Z }}(-1)^{|\gamma|}\gamma\cdot(\sigma \gamma) & \text{for } & t_x:x\to\tau x
\end{align*}
and the automorphism $\tau$ is a dg automorphism. Thus the path category $\GG=\cat C_{\hater{Q}\times\Z}$ is dg category, and $\tau$ endows $\GG$ with the structure of a dg $\Z$-category.

There is an evident dg quiver homomorphism $q:\hater{Q}\times\Z\to\hater{Q}$ by given by projection onto the first coordinate. It is a $\Z$-Galois covering of quivers, and hence induces a dg Galois $\Z$-covering functor $q:\GG\to\cat G$. The section $S$ of $q$ given by the objects $(i,0)$ of $\GG$ for $i\in Q_0$ induces an isomorphism $$\GG^S\xrightarrow{\sim}\cat G$$
by Proposition \ref{prop:SectionIsomorphism}. In the path category $\GG$, one has $\GG(x,y)=0$ if $\ell(x)>\ell(y)$, and so 
\begin{equation}\label{eqn:SectionalAlgebra}
\k[\GG^S]=\bigoplus_{\substack{x,y\in S\\ n\geq 0}}\GG(x,\tau^{-n}y).
\end{equation}

The inclusion of quivers $Q\hookrightarrow\hater{Q}\times\Z$ given by $i\mapsto (i,0)$ induces a functor $\iota:\cat C_Q\to\GG$. For an object $x$ of $\GG$ $$R(x)=\GG(-,x)\circ\iota:\cat C_Q\to\Cdg(\k)$$ is a dg $\cat C_Q$-module. This in turn defines a dg functor
\begin{gather*}
R:\GG\to\Cdg(Q) \\ x\mapsto R(x)
\end{gather*} 
into the category of dg $\cat C_Q$-modules. More concretely, for a vertex $j\in Q_0$, one has $R(x)(j)=\GG((j,0),x)$, which is bigraded and equipped with a degree $(0,-1)$ endomorphism $d$. We think of $R(x)(j)$ as a chain complex of graded vector spaces, where the internal degree is given by weight. Hence, the shift functor $[1]$ is taken with respect to the external grading only, and a homogeneous morphism $f$ skew-commutes with differentials in the sense that $d\circ f=(-1)^{|f|}f\circ d$.

Denote by $\GG^{-}$ the full subcategory of $\GG$ consisting of objects $x=(i,n)$ with $n \leq 0$.

\begin{proposition}\label{prop:ProjectiveResolution}
Let $x=(i,n)$ be an object of $\GG^{-}$. The complex $R(x)$ is a projective resolution of $\tau^nP_i$, and so in particular the image of $R$ lies in $\Pdg(Q)$.
\end{proposition}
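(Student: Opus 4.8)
The plan is to compute the complex $R(x) = \GG(-,x)\circ\iota$ explicitly as a complex of projective $\k Q$-modules and identify it with a standard projective resolution of $\tau^n P_i$ coming from the mesh construction. First I would unpack the definition: for each vertex $j\in Q_0$, the graded space $R(x)(j) = \GG((j,0),x)$ is spanned by paths in $\hater Q\times\Z$ from $(j,0)$ to $x=(i,n)$, graded by weight, with differential induced by $d t_y = \rho_y$. Because $\GG(y,z)=0$ whenever $\ell(y)>\ell(z)$, and since $x$ lies in $\GG^-$ (so $n\le 0$), every such path passes only through vertices with $\ell$-coordinate between $0$ and $n$; the arrows $t_y$ each raise $\ell$ by one and carry weight-degree $(1,1)$, the arrows $\alpha^*$ raise $\ell$ by one with bidegree $(1,0)$, and the arrows $\alpha\in Q_1$ preserve $\ell$ with bidegree $(0,0)$. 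The key structural point is that modulo the image of $d$, every path can be rewritten so that each ``$\ell$-step'' is a single $t$-arrow or a single $\alpha^*$-arrow followed by composable $Q$-arrows; the mesh relator $\rho_y = d t_y$ exactly encodes the relation making the cone of the map $\bigoplus_{f\colon \tau z\to y} h(z)\to h(y)$ equal to $h(\tau y)[1]$ in Happel's theorem.

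Concretely I would argue by induction on $-n\ge 0$. For $n=0$, $x=(i,0)$: any path from $(j,0)$ to $(i,0)$ in $\hater Q\times\Z$ must stay at $\ell$-level $0$ (nothing decreases $\ell$), hence lies entirely in the copy of $Q$, so $R(i,0)(j) = e_i\,\k Q\,e_j$ concentrated in weight $0$, i.e. $R(i,0)\cong P_i$, which is already projective and is the trivial resolution of $\tau^0 P_i = P_i$. For the inductive step, consider $x=(i,n)$ with $n<0$. Paths into $x$ either factor through $\tau x = (i,n+1)$ via the unique arrow $t_{\tau x}\colon (i,n+1)\to(i,n)$... wait, that is the wrong direction; rather, paths into $x=(i,n)$ arrive via arrows ending at $x$, namely $t_{(i,n-1)}$ (no, $\ell$ too small), and the arrows $\alpha^*$ and $t$ landing at $(i,n)$ come from $\ell$-level $n-1$. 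Reorganizing: one should filter $R(x)$ by the number of $t$-arrows used. The subcomplex spanned by paths using at least one $t$-arrow, after factoring out the leading $t_y$ and using $d t_y = \rho_y$, contributes the cone term, while paths using no $t$-arrows reduce to paths in the mesh category $\HH$, which by Happel's theorem compute $\Db(Q)(P_j, \tau^n P_i)$. Assembling these via the mesh triangle $h(\tau x)\to \bigoplus_{f\colon y\to x} h(y)\to h(x)\to h(\tau x)[1]$ and the inductive hypothesis applied to the objects $y$ with $\ell(y)=n+1$ and to $\tau x$ with $\ell = n+1$, one sees that $R(x)$ is the mapping cone of a map between projective resolutions of $\bigoplus_f \tau^{n+1}P_{q(y)}$ and $\tau^{n+1}P_i$, shifted appropriately, which is a projective resolution of $\tau^n P_i$ since the triangle identifies that cone with $\tau^n P_i$ (using that $\tau$ is exact on the derived level and $\tau^n P_i$ has projective dimension realized by this mesh pattern).

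The main obstacle I anticipate is bookkeeping the differential and showing $R(x)$ is \emph{acyclic in nonzero weight} with $H_0 = \tau^n P_i$ — that is, verifying the mesh relators $\rho_y = d t_y$ genuinely cut out the homology so that the only surviving classes are those in the mesh category $\HH$. This amounts to a spectral-sequence or filtration argument: filter $R(x)$ by the number of $t$-arrows, observe the associated graded differential is induced purely by the $\rho_y$ relations, and check that the resulting complex is exact except at the ``no-$t$'' page, where it computes $\HH(\iota(j), x) = \Db(Q)(P_j,\tau^n P_i)$ by Happel. Once acyclicity is in hand, projectivity of each $R(x)(j)$ in each weight is clear since each graded piece is a direct sum of $e_i\k Q e_j$'s (free modules), so $R(x)\in\Pdg(Q)$. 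Finally, the inductive identification of the quasi-isomorphism type with $\tau^n P_i$ follows from the compatibility of $R$ with $\tau$ (which holds by construction, $R\circ\tau^{-1}$ corresponds to the shift in the $\ell$-coordinate) together with Happel's mesh triangle being sent by $R$ to a genuine triangle of complexes.
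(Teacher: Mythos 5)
Your high-level strategy matches the paper's: both identify $R(x)$ with the cone of a lift of the mesh map and induct via Happel's mesh triangle. The paper realizes this concretely by exhibiting an explicit chain isomorphism $\psi:\Cone(\phi)\xrightarrow{\sim}R(x)$, $\psi(s,g)=t_x\circ s+\sum_{f:y\to x}f\circ g$, where $\phi:R(\tau x)\to\bigoplus_{f:y\to x}R(y)$ is $\phi(s)=\sum_f(\sigma f)\circ s$; this is more direct than the filtration-by-$t$-arrows spectral sequence you gesture at but never actually run, though it encodes the same idea (the leading $t$-arrow carves out the $R(\tau x)[1]$ summand of the cone).

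The genuine gap is your induction measure. You induct on $-n$, asserting that the objects $y$ in the mesh triangle all satisfy $\ell(y)=n+1$. That is false: the irreducible degree-$0$ morphisms $f:y\to x$ for $x=(i,n)$ come from two families of arrows in $Q\times\Z$ — the $(\alpha,\cdot)$'s preserve $\ell$, while the $(\alpha^*,\cdot)$'s shift it by one — so in general $\ell(y)\in\{n,n+1\}$, and for some vertices every $y$ sits at level $n$. For instance, for the Kronecker quiver with $x=(1,-1)$, the mesh triangle comes from $0\to P_1\to(\tau^-P_2)^2\to\tau^-P_1\to 0$, so $y=(2,-1)$ with $\ell(y)=n$ exactly; your induction hypothesis is simply unavailable there. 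The paper instead inducts on $L(x)=\min_{j\in Q_0}L_j(x)$, the minimal length of a degree-$0$ morphism $(j,0)\to x$: this strictly decreases for every $y$ in the cone because any degree-$0$ morphism into $x$ must factor through some $f:y\to x$ (the $t_x\circ s$ summand of the decomposition in Lemma~\ref{lem:ShortExactSequence} would require a negative-degree $s$, which doesn't exist in $\GG$), and one checks $L(\tau x)<L(x)$ as well. Replacing your measure $-n$ with $L(x)$ is what closes the argument.
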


\begin{proof} 
Let $\{f:x\to y\}$ be the set of irreducible degree 0 morphisms mapping to $x$. We first claim that $R(x)$ is isomorphic to the mapping cone of the morphism
$$\phi:R(\tau x)\to\bigoplus_{f:y\to x}R(y)$$
given by $\phi(s)=\sum_{f:y\to x}(\sigma f)\circ s$.

With our grading conventions, the mapping cone of $\phi$ is equipped with the differential 
$$d_{\Cone}=\begin{pmatrix} -(-1)^{|\sigma f|}d & \\ \phi & d\end{pmatrix}$$
on the $R(\tau x)[1]\oplus R(y)$ component. Here, the shift is taken with respect to degree.

We claim that 
\begin{gather*}\label{eqn:ProjectiveResolution}
\psi:\Cone(\phi)=R(\tau x)[1]\oplus\bigoplus_{f:y\to x}R(y)\to R(x) \\
(s,g)\mapsto t_x\circ s+\sum_{f:y\to x}f\circ g
\end{gather*}

is a chain isomorphism. Note it is an isomorphism of underlying graded spaces since any morphism with target $x$ factors through some $y$ or $\tau x$. So, we only need to verify that its components are chain maps.
 
On the one hand,
\begin{gather*}(\psi\circ d_{\Cone})(s,g)=\Phi(-(-1)^{|\sigma f|}ds, \sum_{f:y\to x}(\sigma f)\circ g+dg) \\
=-(-1)^{|\sigma f|}t_x\circ ds+\sum_{f:y\to x}\left(f\circ(\sigma f)\circ s +f\circ dg\right) \\
=(-1)^{|f|}t_x\circ ds+\sum_{f:y\to x}f\circ(\sigma f)\circ s +\sum_{f:y\to x}f\circ dg
\end{gather*}

Since $|f|+|\sigma f|=1$. On the other hand

\begin{gather*}
(d\circ\psi)(s,g)=d(t_x\circ s+\sum_{f:y\to x}f\circ g) \\
=dt_x\circ s+t_x\circ ds+\sum_{f:y\to x}(-1)^{|f|}f\circ dg \\
=\sum_{f:y\to x}(-1)^{|f|}f\circ (\sigma f)\circ s+t_x\circ ds+\sum_{f:y\to x}(-1)^{|f|}f\circ dg
\end{gather*}

and so $d_{\Cone}\circ\psi=(-1)^{|f|}\psi\circ d$.  Hence $\psi$ is a graded chain map since it's restriction to each $R(y)$ has degree $|f|$.

Let $L_j(x)$ denote the length of any degree 0 morphism $(j,0)\to x$ in $\GG$, and $L(x)=\min\set{L_j(x):j\in Q_0}$. We proceed by induction on $L(x)$. Note $L(x)=0$ implies $n=0$. Thus for any vertex $j\in Q_0$
$$R(x)(j)=\GG((j,0),(i,0))=C_Q(j,i)=P_i(j)$$ and so $R(x)=P_i$ is a projective resolution of $P_i$.  

Suppose now that $L(x)>0$. Since $L(y),L(\tau x)<L(x)$ we may assume by induction that $R(y)$ and $R(\tau x)$ are projective complexes quasi-isomorphic to $\tau^{\ell (y)}P_{q(y)}$ and $\tau^{n+1}P_i$ respectively. But $R(x)$ is quasi-isomorphic $\Cone(\phi)$ which is an acyclic projective complex. Moreover, $R(x)$ is quasi-isomorphic to the cokernel of 
$$\tau^{n+1}P_i\to\bigoplus_{f:y\to x}\tau^{\ell (y)}P_{q(y)}$$
and so $R(x)$ is a projective resolution of $\tau^nP_i$.  
\end{proof}

As a corollary we get that the complexes $R(\tau^-x)$ and $\tau^-R(x)$ are quasi-iso\-morphic for $x$ in $\GG^{-}$. Using this fact and the presentation of the sectional algebra from \eqref{eqn:SectionalAlgebra}, we get that the functor $R$ induces a dg algebra homomorphism
\begin{equation}\label{eqn:SectionalDerivedTranslationAlgebra}
\k[\GG^S]\to\bigoplus_{\substack{x,y\in S \\ n\geq 0}} \Pdg(Q)(R(x),\tau^{-n}R(y))
\end{equation}
where the algebra on the right hand side is quasi-isomorphic to the derived translation algebra $U_Q$.

The following Lemma originally appeared in \cite{IgusaTodorov}. We record the proof with slight modification to incorporate dg structures. 

\begin{lemma}\label{lem:ShortExactSequence}
Let $x$ be an object of $\GG$ and $\{f:y\to x\}$ the set of irreducible degree 0 morphisms mapping to $x$. Then there is a short exact sequence of functors of graded chain complexes
$$\xymatrix{0\ar[r] &\bigoplus_{f:y\to x} \GG(-,y)\ar[r]^{\phi} & \GG(-,x)\ar[r]^{\delta} & \GG(-,\tau x)[1]\ar[r] & 0}$$
where $[1]$ denotes shift with respect to the external (i.e. chain) grading.  
\end{lemma}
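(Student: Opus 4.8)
The short exact sequence should be extracted directly from the mesh-category picture. The key structural input is already contained in the proof of Proposition~\ref{prop:ProjectiveResolution}: there we exhibited a chain isomorphism $\psi\colon\Cone(\phi)\xrightarrow{\sim}R(x)$, where $\phi\colon R(\tau x)\to\bigoplus_{f\colon y\to x}R(y)$ is given by $\phi(s)=\sum_{f\colon y\to x}(\sigma f)\circ s$, and $R(x)=\GG(-,x)\circ\iota$. The point of the present lemma is that the analogous statement holds at the level of the representable functors $\GG(-,x)$ themselves, \emph{before} restricting along $\iota\colon\cat C_Q\to\GG$, and that the mapping-cone triangle becomes an honest short exact sequence of functors of graded complexes because of the directedness of $\GG$ (the morphism spaces $\GG(z,w)$ vanish unless $\ell(z)\le\ell(w)$, so the cone splits as a graded object). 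First I would define $\phi$ on $\GG(-,-)$ by the same formula, $\phi(s)=\sum_{f\colon y\to x}(\sigma f)\circ s$ for $s\in\GG(z,y)$, and $\delta\colon\GG(-,x)\to\GG(-,\tau x)[1]$ by precomposition: every morphism $g\colon z\to x$ in $\GG$ can be written uniquely as $t_x\circ s+\sum_{f\colon y\to x}f\circ g_f$ with $s\colon z\to\tau x$ and $g_f\colon z\to y$ (this is the factorization statement used in Proposition~\ref{prop:ProjectiveResolution}), and one sets $\delta(g)=s$, the ``$t_x$-component'' of $g$.

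The verification then breaks into three routine steps. \textbf{Exactness.} Injectivity of $\phi$, surjectivity of $\delta$, and $\im\phi=\ker\delta$ all follow from the unique factorization of morphisms into $x$ through the $y$'s and through $\tau x$ via $t_x$: a morphism lies in the image of $\phi$ precisely when its $t_x$-component vanishes. This is exactly the content of the isomorphism of underlying graded spaces already noted in the proof of Proposition~\ref{prop:ProjectiveResolution} (``any morphism with target $x$ factors through some $y$ or $\tau x$''), now read as a statement about each representable functor evaluated at an object $z$ of $\GG$. \textbf{Compatibility with differentials.} One must check that $\phi$ and $\delta$ commute with the differentials $d_{XY}$ on the morphism spaces (up to the sign conventions for shifts; recall $d[1]=-d$). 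Since $d$ is a derivation determined by $d t_x=\rho_x=\sum_{f\colon y\to x}(-1)^{|f|}f\cdot(\sigma f)$ and $d\gamma=0$ for $\gamma$ in $Q\times\Z$, differentiating $g=t_x\circ s+\sum f\circ g_f$ produces the relator term $\sum_{f}(-1)^{|f|}f\circ(\sigma f)\circ s$, which is precisely $\phi$ applied to something, together with $t_x\circ ds$ and $\sum_f(-1)^{|f|}f\circ dg_f$; matching the $t_x$-components gives $\delta(dg)=d(\delta g)$, and the remaining terms give the chain-map property of $\phi$. This is the same computation as in Proposition~\ref{prop:ProjectiveResolution}, merely transcribed from $R(x)$ to $\GG(-,x)$. \textbf{Functoriality / naturality in the first variable} is immediate, since $\phi$ and $\delta$ are defined by post- and pre-composition respectively and composition in $\GG$ is associative.

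\textbf{Main obstacle.} There is no deep obstruction here; the lemma is essentially a bookkeeping repackaging of the cone computation in Proposition~\ref{prop:ProjectiveResolution}. The one genuinely delicate point is sign management: the shift $[1]$ is taken with respect to the \emph{external} (chain) grading only, while $t_x$ has bidegree $(1,1)$, so one must be careful that $\delta$, being ``the $t_x$-component'' map, lands in $\GG(-,\tau x)[1]$ with the correct degree shift (a degree-$n$ morphism $z\to x$ whose $t_x$-component is $s$ has $s$ of external degree one less, which is exactly the $[1]$). Keeping $d[1]=-d$ and the Koszul signs $|f|+|\sigma f|=1$ straight throughout the $\phi$/$\delta$ compatibility check is the only thing requiring care, and it is dispatched exactly as in the preceding proposition. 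Once the factorization $g=t_x\circ s+\sum f\circ g_f$ is invoked, everything else is formal.
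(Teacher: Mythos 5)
Your approach is essentially the paper's: use the unique decomposition $h = t_x\circ s + \sum_{f:y\to x} f\circ g_f$ of any morphism $h\colon z\to x$, read exactness off of uniqueness, and verify the chain-map property via the Leibniz rule and $dt_x = \rho_x$. The sign-bookkeeping you flag ($d[1]=-d$, $|f|+|\sigma f|=1$) is indeed the only delicate part, and your treatment of it is correct.

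However, the formula you give for $\phi$ is wrong, and not merely as a typo. You define $\phi(s)=\sum_{f:y\to x}(\sigma f)\circ s$ for $s\in\GG(z,y)$, copying the formula of the map $\phi\colon R(\tau x)\to\bigoplus_f R(y)$ from the proof of Proposition~\ref{prop:ProjectiveResolution}. But that is a different map with a different source: it takes $s\colon(j,0)\to\tau x$, and $(\sigma f)\circ s$ makes sense because $\sigma f\colon\tau x\to y$. With $s\in\GG(z,y)$ as you write, the composite $(\sigma f)\circ s$ is not even defined, since the target of $s$ is $y$ but the source of $\sigma f$ is $\tau x$. The map $\phi$ of the present lemma is the one fitting into $\bigoplus_f\GG(-,y)\to\GG(-,x)$, and it must be post-composition with $f$: on the $f$-component it is $\phi(g)=f\circ g$ for $g\in\GG(z,y)$. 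The roles in Proposition~\ref{prop:ProjectiveResolution} are reversed (there $\phi$ attaches $R(\tau x)$ to $\bigoplus R(y)$ and $R(x)$ appears as the cone), while here the lemma packages the decomposition of $\GG(-,x)$ directly. Your later statement ``a morphism lies in the image of $\phi$ precisely when its $t_x$-component vanishes'' implicitly uses the correct $\phi(g)=f\circ g$, so your exactness argument does not actually depend on the erroneous formula, but you should fix it. Likewise ``$\delta$ by precomposition'' is a misnomer---$\delta$ is neither pre- nor post-composition, but projection onto the $t_x$-component of the decomposition---though you describe it correctly a line later.

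Once the definition of $\phi$ is corrected, the rest of your argument matches the paper's proof and goes through.
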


\begin{proof}
The map $\phi$ is defined on the component $\GG(z,y)$ by $\phi(g)=f\circ g$. It is a graded chain map since $d\circ\phi = (-1)^{|\phi|}\phi\circ d$.     

Any morphism $h:z\to x$ can be uniquely decomposed as  
\begin{equation}\label{eqn:Decomposition}
h=t_x\circ s+\sum_{f:y\to x}f\circ g
\end{equation}
for $s:z\to\tau x$ and $g:z\to y$. (Recall that the morphisms in $\GG$ go in the opposite direction of the arrows in $\hater{Q}\times\Z$.) Define $\delta(h)=s$, which is again a graded chain map: $$dh=t_x\circ ds+\sum_{f:y\to x}f\circ\left( \sigma f\circ s + (-1)^{|f|}dg\right)$$ and so $\delta(dh)=ds=d\delta(h)$. In other words, $\delta\circ d=(-1)^{|\delta|}d[1]\circ\delta$.

The uniqueness of the decomposition \eqref{eqn:Decomposition} implies that the sequence is exact.    
\end{proof}

We now show that the homomorphism \eqref{eqn:SectionalDerivedTranslationAlgebra} induces an isomorphism in homology, completing the proof of Theorem \ref{thm:GinzburgDerivedTranslationAlgebra}.

\begin{proposition}
The bigraded functor $$H_*R:H_*\GG^{-}\to H_*\Pdg(Q)$$ induced in homology is fully faithful.
\end{proposition}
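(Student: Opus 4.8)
The plan is to prove full faithfulness of $H_*R$ on $H_*\GG^-$ by reducing to the indecomposable objects $x=(i,n)$ with $n\le 0$, using the short exact sequences of Lemma \ref{lem:ShortExactSequence} together with the projective resolutions of Proposition \ref{prop:ProjectiveResolution}, and inducting on the length $L(x)$. Concretely, fix objects $x,z$ of $\GG^-$; we must show that $R$ induces an isomorphism
\[
H_*\GG(z,x)\xrightarrow{\sim} H_*\Pdg(Q)(R(z),R(x)) = \Hom_{\Db(Q)}(R(z),R(x))
\]
(the right side being ordinary Hom in the derived category, since $R(z),R(x)$ are projective resolutions of shifted AR-translates of projectives and $\Pdg(Q)$ computes $\Db(Q)$). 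By additivity it suffices to take $x=(i,n)$, $z=(j,m)$ indecomposable. The key input is that $R$ sends the exact triangle of functors from Lemma \ref{lem:ShortExactSequence} to the Happel triangle: applying $R$ to
\[
0\to\bigoplus_{f:y\to x}\GG(-,y)\to\GG(-,x)\to\GG(-,\tau x)[1]\to 0
\]
yields, after evaluating at objects of $\cat C_Q$ and taking homology, the AR-triangle $\tau^{n+1}P_i\to\bigoplus \tau^{\ell(y)}P_{q(y)}\to\tau^n P_i\to(\tau^{n+1}P_i)[1]$ of Happel's theorem, because the cokernel computation in the proof of Proposition \ref{prop:ProjectiveResolution} identifies $R(x)$ with the cone of $\phi$.

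First I would handle the base case $L(x)=0$, i.e.\ $x=(i,0)$, where $R(x)=P_i$ is projective concentrated in degree $0$; then $H_*\GG(z,(i,0))$ is concentrated in degree/weight $0$ and equals $\GG_0((j,m),(i,0)) = \HH((j,m),(i,0))$ modulo mesh relations in the appropriate sense, which by Happel's theorem maps isomorphically onto $\Hom_{\Db(Q)}(\tau^mP_j,P_i)$; one also checks higher homology vanishes, matching $\Ext^{>0}(\tau^mP_j,P_i)=0$ since $P_i$ is projective. For the inductive step $L(x)>0$, I would apply $\GG(z,-)$, respectively $R(z)$-Hom, to the two triangles (the $\GG$-side triangle of Lemma \ref{lem:ShortExactSequence} and its image, the Happel triangle under $R$), obtaining two long exact sequences in homology, and use the natural transformation between them induced by $R$. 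Since $L(y),L(\tau x) < L(x)$, the induction hypothesis gives that $H_*R$ is an isomorphism on the $\Hom$-spaces into $y$ and into $\tau x$; the Five Lemma then forces $H_*R$ to be an isomorphism on $\Hom$ into $x$. One must be slightly careful that the long exact sequence from Lemma \ref{lem:ShortExactSequence} is a long exact sequence of \emph{bigraded} groups and that the connecting maps match under $R$ with the connecting maps of the Happel triangle — this follows from naturality of the cone construction, as $\delta$ in Lemma \ref{lem:ShortExactSequence} and $\psi$ in Proposition \ref{prop:ProjectiveResolution} are compatible.

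The main obstacle, and the step requiring the most care, is the base-of-induction comparison: verifying that $H_*\GG(z,(i,0))$ really does coincide with $\Hom_{\Db(Q)}(\tau^m P_j, P_i)$ in all degrees and weights, not just that there is an abstract isomorphism. On the $\GG$-side one has a differential graded path algebra whose homology in degree $0$ is the mesh category relations, and one needs that (a) the homology is concentrated in degree $0$ when the target is projective, and (b) the resulting degree-$0$ homology is computed by the mesh relators $\rho_x = dt_x$, which is exactly how the differential was defined — so $H_0\GG(z,(i,0)) = \HH(z,(i,0))$ and $H_{\ne 0} = 0$ for projective targets. Combined with Happel's full faithfulness $h:\HH\hookrightarrow\Db(Q)$ and the fact that $\Hom(\tau^mP_j,P_i)$ has no higher Ext, this closes the base case. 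The remaining bookkeeping — tracking the external (chain) grading versus the weight grading through the shifts $[1]$, and the signs in the cone differential — is routine given the conventions already fixed in the proof of Proposition \ref{prop:ProjectiveResolution} and Lemma \ref{lem:ShortExactSequence}, so I would not belabor it.
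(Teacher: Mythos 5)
The inductive step you propose — applying $\GG(z,-)$ and $\Hom(R(z),-)$ to the short exact sequence of Lemma \ref{lem:ShortExactSequence} and its image, then running the Five Lemma — is essentially the paper's argument. The gap is in your base case, and it is a genuine error, not a bookkeeping slip.

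You take the base of the induction to be $x=(i,0)$, so that $R(x)=P_i$ is a projective module concentrated in degree $0$, and you assert that $H_*\GG(z,(i,0))$ and $\Hom_{\Db(Q)}(R(z),P_i)$ are both concentrated in degree $0$ because ``$\Ext^{>0}(\tau^m P_j,P_i)=0$ since $P_i$ is projective.'' But projectivity of $P_i$ kills $\Ext^{>0}(P_i,-)$, not $\Ext^{>0}(-,P_i)$, and the latter is what appears here. It is in fact false: for the Kronecker quiver one has $\Ext^1(\tau^{-1}P_2,P_2)\cong D\underline{\End}(\tau^{-1}P_2)\cong\k\neq 0$, so $\Db(Q)^{\Z}(R(z),R(x))$ has a nonzero degree-$1$ component even when $R(x)$ is projective. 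Correspondingly $H_1\GG(z,(i,0))$ must also be nonzero, contrary to your claim that the homology of $\GG(z,(i,0))$ is concentrated in degree $0$ and ``is computed by the mesh relators.'' Once both sides carry nontrivial higher homology, the base case needs an actual comparison of those higher groups, which your argument does not supply.

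The paper sidesteps this by choosing a different base of induction: it runs a \emph{double} induction on the degree $n$ and on the minimal length $L(x,y)$ of a degree-$0$ morphism $x\to y$, with base cases at $n=0$ (handled by Happel's fully faithful functor $\HH\hookrightarrow\Db(Q)$) and at $L(x,y)=1$ (where there is a single irreducible degree-$0$ morphism and \emph{both} complexes really are concentrated in degree $0$). The inductive step is then the same Five-Lemma argument you give, using that $L(x,\tau y)$ and $L(x,z)<L(x,y)$. If you want to keep your induction on $x$ only, you would need to replace the projectivity argument in the base case with a direct verification that $H_*R$ is an isomorphism in all degrees when $x=(i,0)$ — which is no easier than the general statement. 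The fix is to switch to the length $L(z,x)$ as the induction variable and use the paper's two base cases.
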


\begin{proof}
Let $x,y$ be objects in $\GG^{-}$ and define $L(x,y)$ to be the minimal length among degree 0 morphisms $x\to y$ in $\GG$. We show that the (weight) graded map $$H_nR_{xy}:H_n\GG(x,y)\to \Db(Q)^\Z_n(R(x),R(y))$$ is an isomorphism for every $n$ by induction on $n$ and $L(x,y)$.

For $n=0$, one has $H_0\GG(x,y)=\HH(x,y)$ and $$\Db(Q)^\Z(R(x),R(y))=\Db(Q)(R(x),R(y)).$$ By Proposition \ref{prop:ProjectiveResolution}, $R(x)$ and $R(y)$ are quasi-isomorphic to $h(x)$ and $h(y)$ respectively, where $h:\HH\to\Db(Q)$ is Happel's functor (\emph{cf.} Theorem \ref{thm:Happel}). If $L(x,y)=1$, then there is exactly one morphism $x\to y$ in $\GG$, and moreover this morphism has weight 0. Thus the complexes $H_*\GG(x,y)$ and $\Db(Q)^\Z(R(x),R(y))$ are concentrated in degree 0, and hence isomorphic by Happel's Theorem.

Suppose now that $L(x,y)>1$, and consider the degree 0 irreducible morphisms $f:z\to y$. Note $L(x,\tau y),L(x,z)<L(x,y)$ since every degree 0 morphism $x\to y$ in $\GG$ factors through $\tau y$ or some such $z$. By Lemma \ref{lem:ShortExactSequence} there is a short exact sequence
$$\xymatrix{0\ar[r] &\bigoplus_{f:z\to y} \GG(x,z)\ar[r] & \GG(x,y)\ar[r] & \GG(x,\tau y)[1]\ar[r] & 0}$$ 
of complexes of graded spaces. It induces a long exact sequence in homology and so there is a commutative diagram
$${\footnotesize\xymatrix@C=12pt{H_n\GG(x,\tau y)\ar[r]\ar[d] & \bigoplus H_n\GG(x,z)\ar[r]\ar[d] & H_n\GG(x,y)\ar[r]\ar[d] & H_{n-1}\GG(x,\tau y)\ar[r]\ar[d] & \bigoplus H_{n-1}\GG(x,z)\ar[d] \\
 {\cat D}^\Z_n(Rx,\tau Ry)\ar[r] & \bigoplus{\cat D}^\Z_n(Rx,Rz)\ar[r]& {\cat D}^\Z_n(Rx,Ry)\ar[r] & {\cat D}^\Z_{n-1}(Rx,\tau Ry)\ar[r] & \bigoplus{\cat D}^\Z_{n-1}(Rx,Rz)}}$$
where the direct sums are taken over all degree 0 irreducible morphisms $f:z\to y$. The first, second, fourth, and fifth vertical maps are isomorphisms by induction, and so by the Five Lemma $H_n\GG(x,y)\to\Db(Q)^\Z(R(x),R(y))$ is an isomorphism.
\end{proof}


\section{The Dynkin case}\label{sec:Dynkin}

We now specialize to Dynkin quivers. Somewhat surprisingly, $\Gamma_Q$ has a richer $A_\infty$-structure in this setting. 

\subsection{The twisted polynomial algebra}\label{sec:TwistedAlgebra} Let $K$ be a bialgebra with comultiplication $\Delta$ and suppose $A$ is a (left) $K$-module algebra. That is, $A$ is equipped with an associative product $\mu:A\otimes A\to A$ and a left $K$-action satisfying
$$x\mu(a,b)=\sum\mu(x_{(1)}a,x_{(2)}b)$$ for all $a,b\in A$ and $x\in K$. Here we use the Sweedler notation $\Delta(x)=\sum x_{(1)}\otimes x_{(2)}$. The \emph{smash product} is the algebra $A\sharp K$ whose underlying $\k$-vector space is $A\otimes K$ with multiplication given by 
\begin{equation}\label{eqn:SmashProduct}
(a\otimes x)\cdot(b\otimes y)=\sum (a\cdot x_{(1)}b)\otimes x_{(2)}y
\end{equation}
for $a,b\in A$ and $x,y\in K$. One readily checks that this defines an associative multiplication making $A\sharp K$ into an associative $\k$-algebra.

If $x$ is a group-like element of $K$ in the sense that $\Delta(x)=x\otimes x$, then the product \eqref{eqn:SmashProduct} simplifies to $a(xb)\otimes xy$.

\begin{definition}
The polynomial ring $\k[t]$ is a bialgebra having $t$ as a group-like element for the comultiplication. If $A$ is an ordinary algebra and $\phi$ an algebra endomorphism of $A$, then $A$ is a module over the bialgebra $\k[t]$, where $t$ acts by $\phi$. We call the smash product $A\sharp\k[t]$ the \emph{$\phi$-twisted polynomial algebra} and denote it by $A^\phi[t]$.  
\end{definition}

Using a standard filtration argument, one obtains the following alternative description of the algebra $A^\phi[t]$.

\begin{lemma}\label{lem:TwistedAsTensors}
Let $A$ be a $\k$-algebra with automorphism $\phi$. Let $A[t]$ denote the vector space of polynomials with (left) coefficients in $A$, and let $R$ be the ideal of the tensor algebra $T(A[t])$ generated by the tensors $$at^p\otimes bt^q-a\phi^p(b)t^{p+q}$$ for $a,b\in A$. Then there is an isomorphism $$T(A[t])/R\xrightarrow{\sim} A^\phi[t]$$ of $\k$-algebras. \qed
\end{lemma}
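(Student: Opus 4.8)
The plan is to build the isomorphism and its inverse essentially by hand. Regard $A[t]=\bigoplus_{n\ge 0}At^n$ as the space of polynomials with left $A$-coefficients, so that the obvious $\k$-linear map $\theta\colon A[t]\to A^\phi[t]$, $at^n\mapsto a\otimes t^n$, is a bijection onto the underlying space $A\otimes\k[t]$ of the smash product. By the universal property of the tensor algebra, $\theta$ extends uniquely to a unital algebra homomorphism $\Theta\colon T(A[t])\to A^\phi[t]$, which is visibly surjective since its image already contains the spanning set $\{a\otimes t^n\}$.

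\textbf{Killing the relations.} First I would check that $\Theta$ annihilates the ideal $R$. Since $t$ is group-like in $\k[t]$, formula \eqref{eqn:SmashProduct} collapses to $(a\otimes t^p)(b\otimes t^q)=a\phi^p(b)\otimes t^{p+q}$, and therefore
\[
\Theta\bigl(at^p\otimes bt^q\bigr)=(a\otimes t^p)(b\otimes t^q)=a\phi^p(b)\otimes t^{p+q}=\Theta\bigl(a\phi^p(b)t^{p+q}\bigr).
\]
Thus every generator of $R$ lies in $\ker\Theta$, so $\Theta$ descends to a surjective homomorphism $\psi\colon T(A[t])/R\to A^\phi[t]$. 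Write $\pi\colon A[t]\hookrightarrow T(A[t])\twoheadrightarrow T(A[t])/R$ for the composite of the canonical inclusion with the quotient; then $\psi\circ\pi=\theta$ is a bijection, so $\pi$ is injective. The lemma then reduces to showing that $\pi$ is also \emph{surjective}: once $\pi$ is a bijection, $\psi=\theta\circ\pi^{-1}$ is an isomorphism of $\k$-algebras.

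\textbf{Surjectivity of $\pi$ — the main point.} This is the ``standard filtration argument'': the relations in $R$ force every tensor to reduce to a linear one. Concretely, modulo $R$ one has $\pi(at^p)\,\pi(bt^q)=\pi\bigl(at^p\otimes bt^q\bigr)=\pi\bigl(a\phi^p(b)t^{p+q}\bigr)$, so the subspace $\pi(A[t])\subseteq T(A[t])/R$ is closed under multiplication; and the $p=q=0$ instances of the relations show that $\pi(1_A)$ is a two-sided unit for $\pi(A[t])$. Since $T(A[t])/R$ is generated as a unital algebra by $\pi(A[t])$, and $\pi(A[t])$ is itself a unital subalgebra, we conclude $\pi(A[t])=T(A[t])/R$, i.e.\ $\pi$ is onto. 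I expect the only real care to be needed here: one must make sure the unit of $T(A[t])/R$ is accounted for — concretely, that the degree-$0$ unit of $T(A[t])$ becomes identified with $\pi(1_A)$ in the quotient — which is precisely why the $p=q=0$ relations must be invoked. Everything else is the formal interplay between the universal property of $T(-)$ and the defining relations of $R$, together with the group-likeness of $t$ that collapses the smash-product multiplication to the clean form used above.
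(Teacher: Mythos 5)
The paper gives no proof of this lemma — it is dismissed with the phrase ``standard filtration argument'' and a \verb|\qed| — so your write-up cannot be checked against a source proof, but it does contain a genuine gap, and it lies exactly where you yourself flagged the ``only real care'' being needed. You assert that because $T(A[t])/R$ is generated as a unital algebra by $\pi(A[t])$ and $\pi(A[t])$ is a unital subalgebra, one may conclude $\pi(A[t])=T(A[t])/R$. That inference is invalid: a unital subalgebra whose unit differs from the unit of the ambient algebra can fail to be all of the ambient algebra even when it generates it. (Think of $\k\times\{0\}\subset\k\times\k$, a unital subalgebra with unit $(1,0)$ that generates $\k\times\k$ as a unital $\k$-algebra.) The $p=q=0$ relations $a\otimes b-ab$ do show that $\pi(1_A)$ is a two-sided unit \emph{for the subalgebra} $\pi(A[t])$, but they do not identify $\pi(1_A)$ with the image $\bar 1_\k$ of the empty tensor.

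In fact the two cannot be identified in $T(A[t])/R$ as you have set things up. Every generator $at^p\otimes bt^q - a\phi^p(b)t^{p+q}$ lies in tensor degree $\geq 1$, hence so does every element of the two-sided ideal $R$, so $R\cap T^0(A[t])=R\cap\k=0$. The element $1_\k-1_At^0$ has a nonzero degree-$0$ component and therefore does not lie in $R$; consequently $\bar 1_\k\neq\pi(1_A)$ in the quotient, and $e=\bar 1_\k-\pi(1_A)$ is a nonzero central idempotent annihilating all of $\pi(A[t])$. One gets $T(A[t])/R\cong\k e\times\pi(A[t])\cong\k\times A^\phi[t]$, not $A^\phi[t]$. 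So the map $\pi$ is injective but not surjective, and $\psi$ is surjective but not injective ($\psi(e)=0$). To make the argument (and, frankly, the lemma as literally stated) go through, one must also impose the identification of units — e.g.\ replace $R$ by the ideal generated by the given tensors together with $1_\k - 1_At^0$, or work with the reduced tensor algebra $\bigoplus_{n\geq1}(A[t])^{\otimes n}$ and unitalize appropriately, or present the result as an isomorphism after passing to the quotient by the central idempotent $e$. With that one correction your strategy (extend $\theta$ multiplicatively, kill $R$, show $\pi$ is bijective) is sound and matches the spirit of the ``standard filtration argument'' the paper gestures at: the relations force every word in $T(A[t])$ to collapse to a single linear factor, and the filtration by tensor degree is what organizes that collapse.
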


Suppose that $A=\k Q/I$ is a bound path algebra and $\phi$ is an endomorphism of $A$ induced by an automorphism of $Q$ that preserves the ideal $I$. We want to describe the twisted polynomial algebra $A^\phi[t]$ as a bound path algebra. 

\begin{definition}
Define a quiver $\Omega$ by adjoining to $Q$ an arrow $u_i:\phi(i)\to i$ for each vertex $i\in Q_0$. Let $J$ be the ideal of $\k\Omega$ generated by $I$ and the elements $$\omega_{\alpha}=u_i\alpha-\phi(\alpha)u_j$$ for arrows $\alpha:i\to j$ in $Q$.
\end{definition}

\begin{proposition}\label{prop:TwistedAsPathAlg}
The twisted polynomial algebra $(\k Q/I)^\phi[t]$ is isomorphic to $\k\Omega/J$.
\end{proposition}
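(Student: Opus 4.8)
The plan is to produce mutually inverse $\k$-algebra homomorphisms $\bar\Psi\colon\k\Omega/J\to A^\phi[t]$ and $\bar\Theta\colon A^\phi[t]\to\k\Omega/J$, where $A=\k Q/I$. Throughout I use that, since $\phi$ is induced by an automorphism of $Q$ preserving $I$, it is an algebra automorphism of $A$ (so the smash product $A^\phi[t]=A\sharp\k[t]$ is defined) and it permutes the vertices and arrows of $Q$ (so the relators $\omega_\alpha$ and the vertices $\phi(i)$ of $\Omega$ make sense).

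\emph{The forward map.} Decompose $1\otimes t=\sum_k(e_k\otimes t)$ in $A^\phi[t]$; by the multiplication rule \eqref{eqn:SmashProduct} each summand $e_k\otimes t$ is homogeneous, in the sense that it lies in a single ``corner'' $(e_a\otimes1)\,A^\phi[t]\,(e_b\otimes1)$. Since $\k\Omega$ is the path algebra of a quiver, there is a unique $\k$-algebra homomorphism $\Psi\colon\k\Omega\to A^\phi[t]$ sending each $e_i\mapsto e_i\otimes1$, each arrow $\alpha$ of $Q$ to $\bar\alpha\otimes1$, and each new arrow $u_i$ to the summand of $1\otimes t$ occupying the corner determined by the source $\phi(i)$ and target $i$ of $u_i$; by construction $\Psi(\sum_i u_i)=1\otimes t$. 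This $\Psi$ kills $I$, since its restriction to the subalgebra $\k Q\subseteq\k\Omega$ factors as $\k Q\twoheadrightarrow A\hookrightarrow A^\phi[t]$, and it kills each $\omega_\alpha=u_i\alpha-\phi(\alpha)u_j$ (for $\alpha\colon i\to j$), because evaluating the two terms via \eqref{eqn:SmashProduct} gives $\phi(\bar\alpha)\otimes t$ in both cases — this is exactly the commutation relation $t\,\bar\alpha=\phi(\bar\alpha)\,t$ built into the smash product. Hence $\Psi$ factors through $\bar\Psi\colon\k\Omega/J\to A^\phi[t]$.

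\emph{The inverse map.} By Lemma \ref{lem:TwistedAsTensors} it suffices to build a homomorphism $T(A[t])/R\to\k\Omega/J$. Put $U=\sum_i u_i\in\k\Omega/J$. Sending $at^p\in A[t]$ to $\hat a\,U^p$, where $\hat a\in\k Q$ is any preimage of $a$, is well defined modulo $J$ (since $I\cdot\k\Omega\subseteq J$) and $\k$-linear in $a$, and extends multiplicatively to an algebra homomorphism $\Theta\colon T(A[t])\to\k\Omega/J$. The crux is that $\Theta$ annihilates $R$, i.e. $\hat a\,U^p\,\hat b\,U^q\equiv\widehat{a\phi^p(b)}\,U^{p+q}\pmod J$; this reduces to the single congruence
\[
U\,\hat b\equiv\widehat{\phi(b)}\,U\pmod J\qquad(b\in\k Q),
\]
by iterating in $p$ (using that $\phi$ is multiplicative) and absorbing the lift ambiguity via $I\subseteq J$. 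For $b$ a vertex idempotent or an arrow of $Q$ the congruence is immediate from idempotent bookkeeping together with one relator $\omega_\alpha$; for a general path one inducts on its length, and for a general element of $\k Q$ one extends $\k$-linearly. Thus $\Theta$ descends to $\bar\Theta\colon A^\phi[t]\cong T(A[t])/R\to\k\Omega/J$.

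Finally one checks $\bar\Theta\bar\Psi=\mathrm{id}$ and $\bar\Psi\bar\Theta=\mathrm{id}$, and since both sides are algebra homomorphisms it is enough to test this on generators: $\bar\Theta\bar\Psi$ fixes each $e_i$ and each arrow $\alpha$ of $Q$ and returns $u_i$ to $u_i$, while $\bar\Psi\bar\Theta$ sends $at^p$ to $\bar\Psi(\hat a)\,\bar\Psi(U)^p=(a\otimes1)(1\otimes t^p)=a t^p$. I expect the one genuine obstacle to be the construction of $\Psi$ itself — matching the source/target data of $\Omega$ with the corner structure of the smash product, so as to see \emph{which} summand of $1\otimes t$ must receive $u_i$ and to confirm that $\Psi$ is a well-defined homomorphism out of the path algebra; once that bookkeeping is settled, the verification that $\Psi$ kills $\omega_\alpha$ and the ``move $U$ past $\k Q$'' congruence $U\hat b\equiv\widehat{\phi(b)}U$ are routine, each amounting to the single relation $t\bar\alpha=\phi(\bar\alpha)t$, respectively $\omega_\alpha$, propagated along a path.
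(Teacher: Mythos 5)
Your proof is correct and follows essentially the same two-map strategy as the paper: a forward map out of $\k\Omega$ defined on arrows, and a reverse map out of $T(A[t])$ via Lemma~\ref{lem:TwistedAsTensors}, each killing the relevant ideal. The paper's forward map $\theta$ is simply specified by $\theta(u_i)=te_i$, which equals your ``corner'' summand $e_{\phi(i)}\otimes t$ of $1\otimes t$, so the maps coincide; the one organizational difference is that the paper first proves the case $I=0$ and then passes to the quotient via $A^\phi[t]\cong\k Q^\phi[t]/I[t]$, whereas you handle the ideal $I$ in one step by lifting coefficients to $\k Q$ and using $I\cdot\k\Omega\subseteq J$ to make $\Theta$ well defined --- a modest streamlining of the same argument.
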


\begin{proof}
We first prove in the case $A=\k Q$, i.e. $I=0$. Let $\theta:k\Omega\to A^\phi[t]$ be the $\k$-algebra homomorphism determined by $\theta(u_i)=te_i$ and the requirement that the restriction of $\theta$ to $\k Q\subset\k\Omega$ is the identity. The relator $\omega_\alpha$ maps to
$$te_i\alpha-\phi(\alpha)te_j=t\alpha-t\alpha=0$$ under $\theta$. Hence $J\subset\ker\theta$, and so there is an induced algebra homomorphism $\bar{\theta}:\k\Omega/J\to A^\phi[t]$. We now provide an inverse to $\bar{\theta}$.

Let $R$ be the ideal of $T(A[t])$ as in Lemma \ref{lem:TwistedAsTensors} so that $A^\phi[t]\cong T(A[t])/R$. The map $A[t]\to\k\Omega$ evaluating a polynomial at $u$ determines a $\k$-algebra homomorphism $\psi:T(A[t])\to\k\Omega$. The homomorphism $\psi$ sends a generator $\alpha t^p\otimes \beta t^q-\alpha\phi^p(\beta)t^{p+q}$ of $R$ to $$\alpha u^p\beta u^q-\alpha\phi^p(\beta)u^{p+q}$$ which lies in $J$ by induction on $p$.  Hence, $\psi$ descends to a homomorphism $\bar{\psi}:T(A[t])/R\to\k\Omega/J$, which is readily checked to be inverse to $\bar{\theta}$.

Now suppose that $A=\k Q/I$ with $I$ possibly non-zero. We note that $A^\phi[t]\cong\k Q^\phi[t]/I[t]$ where $I[t]$ is the ideal of $\k Q_\phi[t]$ generated by polynomials with coefficients in $I$. The homomorphism $\k Q^\phi[t]\to\k\Omega/(\omega_\alpha:\alpha\in Q_1)$ sends the ideal $I[t]$ to $J/(\omega_\alpha)$, and so there are isomorphisms
$$A^\phi[t]\cong\frac{\k\Omega/(\omega_\alpha)}{J/(\omega_\alpha)}\cong\k\Omega/J.$$
\end{proof}

\subsection{Preprojective algebras}\label{sec:Preprojective} 

Of particular interest to us will be a twisted polynomial algebra with coefficients in the preprojective algebra of a Dynkin quiver $Q$, whose construction we now recall. Denote by $\barer{Q}$ the degree $0$ subquiver of $\hater{Q}$. The \emph{preprojective algebra} is the graded algebra $$\Lambda_Q=\k\barer{Q}/(\rho_i:i\in Q_0)$$ where $\rho_i$ are as in Section \ref{sec:GinzburgAlgebra}. Evidently $\Lambda_Q=H_{*,0}\Gamma_Q$, which we can think of as determining a bigrading on $\Lambda_Q$.

There is an isomorphism 
\begin{equation}\label{eqn:Preprojective}
\Lambda_Q\xrightarrow{\sim}\bigoplus_{n\geq 0}\Hom_{\k Q}(\k Q,\tau^{-n}\k Q)
\end{equation}
given by sending an arrow $\alpha:i\to j$ of $\barer{Q}$ to an irreducible morphism $P_j\to P_i$ (resp. $P_j\to\tau^{-}P_i)$ if $\alpha\in Q_1$ (resp. $\alpha^*\in Q_1$). The product on the right-hand side of is defined analogously to that of $U_Q$. The Nakayama functor $\nu(-)=D\Hom_{\k Q}(-,\k Q)$ and its inverse determine an involution $\nu$ of $\Lambda_Q$ under the isomorphism \eqref{eqn:Preprojective}.

From Proposition \ref{prop:TwistedAsPathAlg} the twisted polynomial algebra $\Lambda_Q^\nu[u]$ is isomorphic to the bound path algebra $\k\Omega/J$ where $\Omega$ is the quiver obtained from $\barer{Q}$ by adding arrows $u_i:\nu(i)\to i$ and $J=(\omega_\alpha,\rho_i:\alpha\in\barer{Q}_1, i\in Q_0)$. The involution $\nu$ naturally extends to $\k\Omega$ by defining $\nu(u_i)=u_{\nu(i)}$. It satisfies $\nu(\omega_\alpha)=\omega_{\nu(\alpha)}$ and so descends to an involution of $\k\Omega/J$.

\begin{example}
Consider the preprojective algebra of type $A_3$, with quiver
\[
\xymatrix{3\ar@/^/[r]^{\beta} & 2\ar@/^/[r]^{\alpha}\ar@/^/[l]^{\beta^*} & 1\ar@/^/[l]^{\alpha^*}}
\]
and relators $\rho_1=-\alpha\alpha^*$, $\rho_2=\alpha\alpha^*-\beta^*\beta$, and $\rho_3=\beta\beta^*$.
The involution $\nu$ sends $\nu(e_1)=e_3$, $\nu(e_2)=e_2$, and $\nu(\alpha)=\beta^*$; the rest of the action of $\nu$ is determined by the fact that it commutes with $(-)*$ and the involution property. 

Thus, the quiver $\Omega$ and generators of $J$ are given by

\begin{minipage}[t]{0.33\linewidth}
\[
\xymatrix{3\ar@/^/[r]^{\beta}\ar@(u,u)[rr]^{u_1} & 2\ar@/^/[r]^{\alpha}\ar@/^/[l]^{\beta^*}\ar@(ur,ul)^{u_2} & 1\ar@/^/[l]^{\alpha^*}\ar@(d,d)[ll]^{u_3}}
\]
\end{minipage}
\begin{minipage}[t]{0.67\linewidth}
\begin{gather*}
\omega_{\alpha}=u_2\alpha-\beta^* u_1 \qquad
\omega_{\beta}=u_3\beta-\alpha^*u_2 \\
\omega_{\alpha^*}=u_1\alpha^*-\beta u_2 \qquad \omega_{\beta^*}=u_2\beta^*-\alpha u_3 \\
\rho_1=-\alpha\alpha^* \qquad 
\rho_2=\alpha\alpha^*-\beta^*\beta \qquad
\rho_3=\beta\beta^*.
\end{gather*}
\end{minipage}
\end{example}

We now extend the bigrading of $\barer{Q}\hookrightarrow\hater{Q}$ to a bigrading of $\Omega$ in the a priori bizarre manner as follows:  
Define a function $N:Q_0\to\N$ determined by the condition $$\tau^{-N(i)}P_{\nu(i)}=P_i[1]$$ where $P_i$ is the indecomposable projective $kQ$-module corresponding to the vertex $i$. The bidegree of the arrow $u_i$ is defined to be $(N(i),1)$.

\begin{lemma}
The relators $\omega_\alpha$ are homogeneous with respect to this bigrading, and so the bigrading of $\k\Omega$ descends to $\Lambda^\nu[u]=\k\Omega/J$.
\end{lemma}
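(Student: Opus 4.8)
The plan is to prove that each relator $\omega_\alpha = u_i\alpha - \nu(\alpha)u_j$ associated to an arrow $\alpha\colon i\to j$ of $\barer{Q}$ is bihomogeneous; since the remaining generators $\rho_i$ of $J$ are visibly bihomogeneous of bidegree $(1,0)$, it then follows that $J$ is a bigraded ideal and the bigrading of $\k\Omega$ descends to $\k\Omega/J\cong\Lambda_Q^\nu[u]$, which is the content of the lemma. The two monomials $u_i\alpha$ and $\nu(\alpha)u_j$ each have chain-degree $1+0=1$ in the second component of the bidegree, so only their weights need to be compared: writing $w=\wt(\alpha)$ and $w'=\wt(\nu(\alpha))$, it suffices to show
\[
N(i)+w = N(j)+w'.
\]

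I would establish this by passing to $\Db(Q)$. Under the isomorphism \eqref{eqn:Preprojective} the arrow $\alpha$ corresponds to an irreducible morphism $\varphi\colon P_j\to\tau^{-w}P_i$, and by definition the involution $\nu$ of $\Lambda_Q$ is the one induced on such morphisms by the Nakayama auto-equivalence $\nu$ of $\Db(Q)$, which commutes with $\tau$ and satisfies $\nu\cong\tau\circ[1]$; in particular $\nu(P_k)\cong\tau(P_k)[1]$. Comparing with the defining relation $\tau^{-N(k)}P_{\nu(k)}=P_k[1]$ of the function $N$ yields $\nu(P_k)\cong\tau^{-(N(k)-1)}P_{\nu(k)}$ for every $k$ (and incidentally $N(k)\ge1$, since $P_k[1]$ is not isomorphic to any module). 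Applying $\nu$ to $\varphi$, and using that $\nu$ commutes with $\tau$, I obtain an irreducible morphism $\tau^{-(N(j)-1)}P_{\nu(j)}\to\tau^{-(w+N(i)-1)}P_{\nu(i)}$; applying the auto-equivalence $\tau^{N(j)-1}$ then gives an irreducible morphism $P_{\nu(j)}\to\tau^{-(w+N(i)-N(j))}P_{\nu(i)}$. Since an irreducible morphism from a projective $P_b$ to a $\tau$-shift $\tau^{-c}P_a$ of a projective in the transjective component has $c\in\{0,1\}$ and corresponds under \eqref{eqn:Preprojective} to an arrow $a\to b$ of $\barer{Q}$ of weight $c$, and this arrow is $\nu(\alpha)$ by construction of $\nu$ on arrows, we conclude $w'=w+N(i)-N(j)$, which is precisely the required identity. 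Hence $\omega_\alpha$ is bihomogeneous of bidegree $(N(i)+w,\,1)$.

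The step I expect to take the most care is this last identification: verifying that transporting $\varphi$ along $\nu$ and renormalizing the $\tau$-power really recovers the arrow the text calls $\nu(\alpha)$, and that the exponent $w+N(i)-N(j)$ is forced to lie in $\{0,1\}$. The latter holds because $\nu$ is an exact auto-equivalence and hence carries irreducible morphisms to irreducible morphisms, while every irreducible morphism between indecomposables of the transjective component of $\Db(Q)$ (equivalently, every arrow of the repetitive quiver $Q\times\Z$) has weight $0$ or $1$; the former is bookkeeping once \eqref{eqn:Preprojective} and the relation $\nu\cong\tau[1]$ are in hand. As a sanity check, for $Q$ of type $A_3$ one finds $N=(1,2,3)$ and $\nu$ interchanges the weight-$0$ and weight-$1$ arrows of $\barer{Q}$ exactly as this identity predicts; for instance $\nu(\alpha)=\beta^*$ with $\alpha\colon 2\to1$, $\beta^*\colon 2\to3$, and $N(2)+0 = N(1)+1 = 2$.
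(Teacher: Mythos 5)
Your proof is correct, and it takes a genuinely different route from the paper's. The paper establishes the weight identity $N(i)-N(j)=\wt(\nu(\alpha))-\wt(\alpha)$ by a case-by-case check over the four possible values of $(\wt(\alpha),\wt(\nu(\alpha)))$, each time exhibiting an explicit local subdiagram of the Auslander--Reiten quiver of $\Db(Q)$ and reading off the horizontal displacements of $P_{\nu(i)}$, $P_{\nu(j)}$, $P_j[1]$, $P_i[1]$. You instead give a single uniform argument: transport the irreducible morphism $\varphi\colon P_j\to\tau^{-w}P_i$ across the Serre auto-equivalence $\nu\cong\tau[1]$ of $\Db(Q)$, use the defining relation $\tau^{-N(k)}P_{\nu(k)}=P_k[1]$ to rewrite $\nu(P_k)\cong\tau^{-(N(k)-1)}P_{\nu(k)}$, and then normalize by $\tau^{N(j)-1}$ to land back on an irreducible morphism out of $P_{\nu(j)}$, whose $\tau$-exponent is forced to be $\wt(\nu(\alpha))$. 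What this buys is economy and conceptual clarity: the mechanism (Serre functor commutes with $\tau$ and preserves irreducibility) is made explicit and the case analysis disappears, at the cost of leaning a little more heavily on the paper's slightly informal statement that the involution $\nu$ on $\Lambda_Q$ is the one induced by the Nakayama functor under \eqref{eqn:Preprojective} --- you rightly flag this identification as the point needing care, and it is indeed what replaces the paper's AR-quiver diagram. Both arguments rest on the same two ingredients, namely the definition of $N$ and the fact that irreducible morphisms among shifted projectives in the transjective component change the $\tau$-exponent by $0$ or $1$; your sanity check in type $A_3$ (with the paper's left-module-style conventions, giving $N=(1,2,3)$) is consistent.
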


\begin{proof}
It is clear that $\omega_\alpha$ is homogeneous with respect to degree so it suffices to show that $N(i)+\wt(\alpha)=N(j)+\wt(\nu(\alpha))$. We will show $N(i)-N(j)=\wt(\nu(\alpha))-\wt(\alpha)$ by a case-by-case analysis of the weights of $\alpha$ and $\nu(\alpha)$. If $\wt(\alpha)=0$ and $\nu(\alpha)=1$ the arrows $\alpha:i\to j$ and $\nu(\alpha)^*:\nu(j)\to\nu(i)$ are in $Q$, and so there are irreducible morphisms $$P_j\to P_i \ \quad \ \text{and} \ \quad \ P_{\nu(i)}\to P_{\nu(j)}$$ in $\mod\text{-}\k Q$. Hence in the Auslander-Reiten quiver of $\Db(Q)$ there is a subquiver of the form 
$$\xymatrix@R=1em@C=2em{\ar@{..>}[ddr] & & \text{ \ } P_{\nu(j)}\ar@{..>}[ddr] & & & & P_j[1]\ar[rdd] & & \\
& & & & \cdots & & & & \\
& \text{ \ } P_{\nu(i)}\ar[uur] & & & & \ar@{..>}[uur] & & P_i[1]\ar@{..>}[uur] & }$$ so $N(i)-N(j)=1=\wt(\nu(\alpha))-\wt(\alpha)$. The other three cases for the weights of $\alpha$ and $\nu(\alpha)$ are similar.
\end{proof}

\subsection{The minimal model in Dynkin type}

We are now ready to prove the main theorem computing the minimal model of $\Gamma_Q$ in Dynkin type.

\begin{theorem}\label{thm:TwistedDerivedTranslationAlgebra}
Suppose $Q$ is Dynkin. Then there is a bigraded $\k$-algebra isomorphism 
$$\Lambda_Q^\nu[u]\xrightarrow{\sim} U_Q$$ sending $u_i$ to $s_i:P_i\to P_i[1]$.
\end{theorem}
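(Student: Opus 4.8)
The plan is to reduce, via the presentation $\Lambda_Q^\nu[u]\cong\k\Omega/J$ of Proposition~\ref{prop:TwistedAsPathAlg} and the identification $H_*\Gamma_Q\cong U_Q$ of Theorem~\ref{thm:GinzburgDerivedTranslationAlgebra}, to exhibiting a bigraded $\k$-algebra isomorphism $\k\Omega/J\xrightarrow{\sim}U_Q$ carrying each $u_i$ to $s_i$. First I would build an algebra homomorphism $\theta\colon\k\Omega\to U_Q$: on the subquiver $\barer{Q}\subset\Omega$ take $\theta$ to be the composite $\k\barer{Q}\twoheadrightarrow\Lambda_Q\xrightarrow{\sim}\bigoplus_{n\ge0}\Hom_{\k Q}(\k Q,\tau^{-n}\k Q)\hookrightarrow U_Q$ of the canonical projection with the isomorphism~\eqref{eqn:Preprojective}, and set $\theta(u_i)=s_i$, regarded as an element of $\Db(Q)^\Z(P_i,\tau^{-N(i)}P_{\nu(i)})\subseteq U_Q$ through the isomorphism $\tau^{-N(i)}P_{\nu(i)}=P_i[1]$ defining $N$. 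The map $\theta$ respects bigradings: $\barer{Q}$ carries the bigrading it inherits from $\hater{Q}$, while $s_i$, being the image of $\sigma^{-1}$ on the object $P_i\simeq\tau^{-N(i)}P_{\nu(i)}$, has weight $N(i)$ and sits in the degree component matching the bidegree $(N(i),1)$ assigned to $u_i$.

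Next I would check that $\theta$ annihilates $J=(\omega_\alpha,\rho_i)$. The relators $\rho_i$ are killed automatically, since $\theta|_{\k\barer{Q}}$ factors through $\Lambda_Q=\k\barer{Q}/(\rho_i)$. For $\omega_\alpha=u_i\alpha-\nu(\alpha)u_j$, the required identity $\theta(u_i)\cdot\theta(\alpha)=\theta(\nu(\alpha))\cdot\theta(u_j)$ is a naturality statement: the $s_X$ are the images in $U_Q$ of the components of the natural equivalence $\sigma^{-1}\colon\Id\xrightarrow{\sim}[1]$ of $\Pdg(Q)$, so naturality of $\sigma$ along the irreducible morphism $\theta(\alpha)$ produces $\omega_\alpha$ once the shift $[1]$ is transported through $\tau$ and $\nu$ via the identifications $\tau^{-N(\cdot)}P_{\nu(\cdot)}=P_{\cdot}[1]$ — this transport being exactly the content of the lemma that the $\omega_\alpha$ are bihomogeneous. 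Hence $\theta$ descends to a bigraded homomorphism $\bar\theta\colon\Lambda_Q^\nu[u]\to U_Q$ with $\bar\theta(u_i)=s_i$.

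The remaining task — showing $\bar\theta$ is bijective — is the heart of the matter, and I would carry it out with the covering theory of Section~\ref{sec:Covering}, in parallel with the argument of Section~\ref{sec:Ginzburg}. Form the graded quiver $\tilder{\Omega}$ consisting of $\Z$ disjoint copies of $\barer{Q}$, indexed so that level $n$ carries internal degree $n$, together with arrows $(\nu(i),n)\to(i,n+1)$ lifting the $u_i$; the shift of levels makes $\tilder{\Omega}$ a Galois $\Z$-cover of $\Omega$, and the cover $\tilder{\cat J}$ of $J$ is $\Z$-invariant with the matching property of Lemma~\ref{lem:CoveringIdeals}, so by Propositions~\ref{prop:SectionIsomorphism} and~\ref{prop:CoveringIdeals} the section $S=\{(i,0)\}$ yields $\k[(\cat C_{\tilder{\Omega}}/\tilder{\cat J})^S]\cong\k\Omega/J=\Lambda_Q^\nu[u]$. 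One then constructs a $\Z$-equivariant functor $\tilder{R}\colon\cat C_{\tilder{\Omega}}/\tilder{\cat J}\to\Pdg(Q)$ sending $(i,n)$ to a fixed projective resolution of the complex $P_i[n]$ (realized, using the identity defining $N$, as a $\tau$-translate of an indecomposable projective) and the lifted $u$-arrows to the canonical maps $\sigma^{-1}$, and proves that the induced functor $H_*\tilder{R}$ is fully faithful. This full faithfulness is the main obstacle; I expect to prove it by induction on path length, using the mesh short exact sequences (as in Lemma~\ref{lem:ShortExactSequence}) and the Five Lemma together with Happel's description of $\ind\Db(Q)$, the essential Dynkin-type input being that on objects the shift $[1]$ is a $\tau$-translate twisted by $\nu$, so that the $u$-arrows account precisely for the morphisms of $\Db(Q)$ invisible to the mesh category. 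Granting this, $\tilder{R}$ induces an isomorphism $\Lambda_Q^\nu[u]=\k[(\cat C_{\tilder{\Omega}}/\tilder{\cat J})^S]\xrightarrow{\sim}\bigoplus_{n\ge0}\Pdg(Q)(\k Q,\tau^{-n}\k Q)$, whose homology is $U_Q$; unwinding the construction shows this isomorphism is bigraded, agrees with $\bar\theta$, and carries $u_i$ to $s_i$, completing the proof.
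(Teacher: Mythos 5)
The first two paragraphs of your proposal, constructing the map $\theta\colon\k\Omega\to U_Q$ directly and verifying it kills the relators $\rho_i$ and $\omega_\alpha$, are sound and in the same spirit as the paper (the paper does not write $\theta$ down explicitly, but this is an acceptable alternative framing of the setup). The gap is in your covering construction, and it is not a presentational issue but a genuine error: your cover does not produce a fully faithful comparison functor.

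You propose the cover $\tilder{\Omega}$ to be ``$\Z$ disjoint copies of $\barer{Q}$, indexed so that level $n$ carries internal degree $n$, together with arrows $(\nu(i),n)\to(i,n+1)$'', with $\tilder{R}$ sending $(i,n)$ to a resolution of $P_i[n]$. In this cover all of the double quiver $\barer{Q}$, including the reversed arrows $\alpha^*$, sits inside a single level, and the deck transformation corresponds to the shift $[1]$. Test full faithfulness at level $0$: the morphism space $\tilder{\cat O}((i,0),(j,0))$ is computed entirely from paths in $\barer{Q}$ at level $0$ modulo the mesh relators $\rho$, so it equals $e_j\Lambda_Q e_i = \bigoplus_{m\ge0}\Hom_{\k Q}(P_j,\tau^{-m}P_i)$; but the target $\Db(Q)^\Z\bigl(\tilder{R}(i,0),\tilder{R}(j,0)\bigr) = \Db(Q)^\Z(P_i,P_j) = \bigoplus_m\Db(Q)(P_i,P_j[m]) = \Hom_{\k Q}(P_i,P_j)$, concentrated in degree $0$. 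These have different dimensions as soon as $\Lambda_Q\ne\k Q$, so $\tilder{R}$ cannot be fully faithful, and the Five-Lemma induction you outline has nothing to bootstrap from. The underlying problem is that the covering group must track $\tau$-orbits, not shift-orbits: the paper's cover $\tilder\Omega = \Omega\times\Z$ is built on the \emph{repetitive} quiver $Q\times\Z$ (so only one copy of $Q$, not $\barer Q$, lives at each level; the arrows $\alpha^*$ themselves raise the level by one), with the $u$-arrows $(u_i,n)\colon(\nu(i),n-N(i))\to(i,n)$ jumping $N(i)$ levels, and $\tilder{R}$ (the paper's $\rr$) sends $(i,n)$ to $\tau^n P_i$, compatible with Happel's functor $h$. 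That choice makes the mesh short exact sequences of Lemma~\ref{lem:ShortExactSequence} available and makes the induction on path length go through; your cover does not support them because the mesh relations live entirely inside a single level. Replacing your cover and $\tilder R$ with the paper's $\tilder\Omega$, $\tilder{\cat J}$, and $\rr$ (and correspondingly adjusting the induction to use Lemma~\ref{lem:RepresentationVsHappel}) would repair the argument.
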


Combining with Theorems \ref{thm:GinzburgDerivedTranslationAlgebra} and \ref{thm:DerivedCategoryAinfty} we get the following Corollary:

\begin{corollary}
If $Q$ is Dynkin, the twisted polynomial algebra $\Lambda_Q^\nu[u]$ admits a minimal $A_\infty$-structure so that:
\begin{enumerate}
\item the maps $\mu_n$ are $u$-equivariant,
\item $\Lambda_Q^\nu[u]$ is generated as an $A_\infty$-algebra by $\Lambda$,
\item $\mu_n=0$ for $n\neq 2,3$.
\end{enumerate}
Moreover, $\Lambda_Q^\nu[u]$ with this $A_\infty$-structure is a minimal model for the Ginzburg algebra $\Gamma_Q$.
\end{corollary}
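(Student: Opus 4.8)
The plan is to derive this corollary formally from Theorem~\ref{thm:TwistedDerivedTranslationAlgebra} together with Proposition~\ref{prop:U} and Theorem~\ref{thm:GinzburgDerivedTranslationAlgebra}; once the bigraded algebra isomorphism $\theta\colon\Lambda_Q^\nu[u]\xrightarrow{\sim}U_Q$ of Theorem~\ref{thm:TwistedDerivedTranslationAlgebra} is in hand, no further geometric input is needed. First I would use $\theta$ to transport the minimal $A_\infty$-structure $(\mu_n)_{n\ge 2}$ on $U_Q$ (Proposition~\ref{prop:U}) to $\Lambda_Q^\nu[u]$. Since $\theta$ is an isomorphism of ordinary (that is, $\mu_2$-)algebras, the transported $\mu_2$ is the multiplication of $\Lambda_Q^\nu[u]$ and the transported structure is again minimal, so $\Lambda_Q^\nu[u]$ acquires the asserted minimal $A_\infty$-structure; it remains to read off (1)--(3) and the minimal model claim.

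Properties (1) and (3) are immediate: Proposition~\ref{prop:U}(3) gives $\mu_n=0$ for $n\ne 2,3$ on $U_Q$, hence on $\Lambda_Q^\nu[u]$ after transport; and since $\theta(u_i)=s_i\colon P_i\to P_i[1]$ and Proposition~\ref{prop:U}(1) states that the $\mu_n$ are invariant under left, right, and interior multiplication by the maps $s_X$, the transported compositions are $u$-equivariant. For (2), recall from Proposition~\ref{prop:TwistedAsPathAlg} (or Lemma~\ref{lem:TwistedAsTensors}) that $\Lambda_Q^\nu[u]\cong\k\Omega/J$ is a path algebra with relations on the quiver $\Omega=\barer{Q}\cup\{u_i\}$, hence is generated as an \emph{ordinary} algebra by $\Lambda$ together with the arrows $u_i$; it therefore suffices to place each $u_i$ inside the $A_\infty$-subalgebra generated by $\Lambda$. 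To this end I would take the Auslander-Reiten triangle through $P_i$, namely $P_i\xrightarrow{f}Y\xrightarrow{g}\tau^{-1}P_i\xrightarrow{h}P_i[1]$: its terms are all preprojective objects (the immediate successors of $P_i$ in $\Db(Q)$ are preprojective, $\tau^{-1}P_i$ is, and $P_i[1]=\tau^{-N(i)}P_{\nu(i)}$ with $N(i)>0$ is), so $f,g,h$ represent degree $0$ elements of $U_Q=\k[p(Q)]$, i.e.\ elements of $\Lambda$, and this triangle is non-split. Proposition~\ref{prop:U}(2) then gives $\mu_3(h,g,f)=s_i=\theta(u_i)$, so $u_i$ — and therefore all of $\Lambda_Q^\nu[u]$ — lies in the $A_\infty$-subalgebra generated by $\Lambda$.

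Finally, for the minimal model claim: by Kadei\v{s}vili's Theorem~\ref{thm:Kadeishvili}, $H_*\Gamma_Q$ with its Kadei\v{s}vili structure is a minimal model of the dg algebra $\Gamma_Q$, and by Theorem~\ref{thm:GinzburgDerivedTranslationAlgebra} there is a bigraded algebra isomorphism $H_*\Gamma_Q\xrightarrow{\sim}U_Q$ intertwining this structure with the $A_\infty$-structure on $U_Q$ up to $A_\infty$-quasi-isomorphism. Composing with the strict $A_\infty$-isomorphism $\theta^{-1}$ transports everything to $\Lambda_Q^\nu[u]$, producing an $A_\infty$-quasi-isomorphism from $\Gamma_Q$ to $\Lambda_Q^\nu[u]$ equipped with the transported (minimal) structure, which is exactly the assertion. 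The one substantial ingredient is Theorem~\ref{thm:TwistedDerivedTranslationAlgebra} itself; its proof I would organize via the presentation $\Lambda_Q^\nu[u]\cong\k\Omega/J$ of Proposition~\ref{prop:TwistedAsPathAlg}, defining $\theta$ on generators ($\barer{Q}$ to irreducible morphisms, $u_i$ to $s_i$), checking the relations $\omega_\alpha$ vanish from naturality of $s\colon\Id\Rightarrow[1]$ and the identity $P_i[1]=\tau^{-N(i)}P_{\nu(i)}$, deducing surjectivity from the fact that any morphism out of a projective factors through the canonical maps $s$ down to a degree $0$ (hence preprojective-algebra) morphism, and injectivity from a bigraded dimension count against the description of $U_Q$ in Section~\ref{sec:Ginzburg}. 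The main obstacle throughout is the bigrading bookkeeping, controlled by the function $N$ and the non-weight-preserving Nakayama involution $\nu$, and ultimately by the fractional Calabi-Yau interplay of $[1]$, $\tau$, and $\nu$ on $\Db(Q)$.
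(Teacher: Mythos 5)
Your proposal is correct and matches the paper's intended derivation, which is left implicit: the paper merely says the corollary follows by ``Combining with Theorems~\ref{thm:GinzburgDerivedTranslationAlgebra} and~\ref{thm:DerivedCategoryAinfty}.'' You transport the structure of Proposition~\ref{prop:U} along $\theta$, read off (1) and (3) directly, establish (2) by applying $\mu_3$ to the Auslander--Reiten triangle $P_i\to E\to\tau^{-1}P_i\to P_i[1]$ (correctly noting that all its terms and morphisms lie in the preprojective/degree-zero part, so $f,g,h\in\Lambda_Q$, and Proposition~\ref{prop:U}(2) forces $\mu_3(h,g,f)=s_i=\theta(u_i)$), and conclude the minimal-model claim from Theorem~\ref{thm:GinzburgDerivedTranslationAlgebra} and Kadei\v svili. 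The closing sketch of how you would prove Theorem~\ref{thm:TwistedDerivedTranslationAlgebra} is extraneous to the corollary itself but not wrong in spirit; for the corollary it is cleaner to take that theorem as given, as you in fact do in the main body of the argument.
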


The strategy to prove the Theorem is analogous to that used to prove Theorem \ref{thm:GinzburgDerivedTranslationAlgebra}. We construct a Galois $\Z$-cover $\OO$ of the path category $\cat O:=\cat C_{\Omega}/\cat J$ of $\Lambda^\nu[u]=\k\Omega/J$ and a functor $\rr:\tilder{\cat O}\to\Db(Q)^\Z$ inducing an isomorphism between $\k[\OO^S]\cong\Lambda_Q^\nu[u]$ and $U_Q$ for some section $S$ of the covering.

Let $\tilder{\Omega}=\Omega\times\Z$ be obtained from the repetitive quiver $Q\times\Z$ by adjoining arrows $(u_i,n):(\nu(i),n-N(i))\to (i,n)$ of bidegree $(N(i),1)$. The automorphism $\nu$ of $\Omega$ lifts to $\tilder{\Omega}$ by setting 
$$\nu (i,n)=(\nu(i),n-N(i)) \ \quad \ \text{and} \ \quad \ \nu(\gamma, n)=(\nu(\gamma), n-N(s\gamma))$$ for $\gamma$ an arrow of $\Omega$. Note that for every $x$ there is a unique degree $1$ morphism $u_x:x\to\nu(x)$ in the path category $\tilder{\cat C}_\Omega:=\cat C_{\tilder{\Omega}}$ provided by the arrow $(u_i,n):\nu(i,n)\to (i,n)$.

The automorphism $\tau$ of $Q\times\Z$ extends to $\tilder{\Omega}$ in the evident manner, and commutes with the automorphism $\nu$. This defines a $\nu$-equivariant $\Z$-action on the path category $\tilder{\cat C}_{\Omega}$.
There is an evident covering morphism $q:\tilder{\Omega}\to\Omega$ given by projection onto the first factor which induces a $\nu$-equivariant Galois $\Z$-covering $q:\tilder{\cat C}_\Omega\to\cat C_{\Omega}$. The section $S$ of $q$ given by the objects $(i,0)$ for $i\in Q_0$ induces a $\nu$-equivariant isomorphism
$$\tilder{\cat C}_\Omega^S\xrightarrow{\sim}\cat C_{\Omega}.$$

Let $\tilder{\cat J}$ be the ideal of the category $\tilder{\cat C}_\Omega$ generated by the morphisms
$$\rho_x=\sum_{\substack{f:y\to x \\ \text{in } Q\times\Z}}(-1)^{|f|}f\circ (\sigma f)  \ \quad \ \text{and} \ \quad \ \omega_f=u_y\circ f-\nu(f)\circ u_x$$ for objects $x$ and degree 0 irreducible morphisms $f:x\to y$, and set $\OO=\tilder{\cat C}_\Omega/\tilder{\cat J}$.

The ideal $\tilder{\cat J}$ is preserved by the automorphisms $\tau$ and $\nu$, and the restriction of the coving $q:\tilder{\cat C}_\Omega\to\cat C_{\Omega}$ to $\tilder{\cat J}$ is a covering of the ideal $\cat J$ of $\cat C_\Omega$. Thus by Lemma \ref{lem:CoveringIdeals} there is an induced Galois $\Z$-covering $\OO\to\cat O$, and hence an algebra isomorphism $$\k[\OO^S]\xrightarrow{\sim}\k[\cat O]$$ by Proposition \ref{prop:SectionIsomorphism}.

The inclusion of quivers $Q\hookrightarrow\tilder{\Omega}$ induces an embedding of path categories $\iota:\cat C_Q\to\OO$. For an object $x$ of $\OO$, there is a contravariant functor $$\rr(x)=\OO(-,x)\circ\iota:\cat C_Q\to\Cgr(\k)$$ which we think of as a bigraded $\k Q$-module. This in turn defines a functor 
\begin{gather*}
\rr:\OO\to\Db(Q)^\Z \\
x\mapsto \rr(x)
\end{gather*}
with image in the augmented derived category, since $\OO$ has morphisms of non-zero degree. Denote by $\tilder{\cat C}_\Omega^{-}$ and $\OO^{-}$ the full subcategories of $\tilder{\cat C}_\Omega$ and $\OO$ whose objects $x=(i,n)$ have $n\leq 0$. The functor $\nu$ preserves both $\tilder{\cat C}_\Omega^{-}$ and $\OO^{-}$, but it's inverse does not.

\begin{lemma}
Suppose $x$ is an object of $\OO^{-}$. Composition with $u_x$ induces an isomorphism of graded functors
$$\OO^{-}(-,x)[1]\xrightarrow{\sim}\OO^{-}(-,\nu(x))$$
where shift is taken with respect to degree. Hence, the $\k Q$-modules $\rr(\nu(x))$ and $\rr(x) [1]$ are isomorphic.
\end{lemma}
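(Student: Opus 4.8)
The plan is to proceed along the lines of Proposition~\ref{prop:ProjectiveResolution} and Lemma~\ref{lem:ShortExactSequence}. First, post-composition with $u_x$ is well defined on $\OO^{-}$: the ideal $\tilder{\cat J}$ is two-sided, and since $N(i)\ge 1$ the object $\nu(x)=(\nu(i),n-N(i))$ again has non-positive second coordinate, so $\OO^{-}$ is $\nu$-stable and $g\mapsto u_x\circ g$ carries $\OO^{-}(z,x)$ into $\OO^{-}(z,\nu(x))$, raising the degree by $|u_x|=1$. It therefore suffices to prove that for each object $z=(j,0)$ in the image of $\iota$ the resulting map $\OO^{-}(z,x)\to\OO^{-}(z,\nu(x))$ is bijective; this is precisely the asserted isomorphism of graded $\k Q$-modules $\rr(x)[1]\xrightarrow{\sim}\rr(\nu(x))$.

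Surjectivity is the conceptual heart, and the key point is that the twist relations $\omega_g\colon u_{Y}\circ g=\nu(g)\circ u_{X}$ let one slide a $u$-factor to the target end of a representing path. Given $h\colon z\to\nu(x)$ of positive degree, write it as a composite of irreducibles and pick the $u$-morphism in it that is applied last; everything composed after that morphism has degree $0$, hence (as $\nu$ is an automorphism of $\OO$) is the $\nu$-image of a degree-$0$ morphism, so a repeated use of $\omega$ pushes the chosen $u$-morphism past all of them until it is applied last, into $\nu(x)$, where it must coincide with the \emph{unique} $u$-morphism into $\nu(x)$, namely $u_x$. Thus every morphism $z\to\nu(x)$ of positive degree is $u_x\circ(-)$ of some morphism $z\to x$. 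Since $u_x\circ(-)$ strictly raises degree, surjectivity then reduces to the vanishing of $\OO^{-}_{0}((j,0),\nu(x))$, and this is where the function $N$ does its work: the degree-$0$ part of $\OO$ is the mesh category $\HH$, so Happel's Theorem~\ref{thm:Happel} together with $\tau^{-N(i)}P_{\nu(i)}=P_i[1]$ gives
$$\OO^{-}_{0}\bigl((j,0),\nu(x)\bigr)=\HH\bigl((j,0),\nu(x)\bigr)\cong\Db(Q)\bigl(P_j,\tau^{n}P_i[1]\bigr),$$
which is zero because $P_j$ is projective and $\tau^{n}P_i$, being preprojective with $n\le 0$, has no homology in negative degrees.

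For injectivity I would either invert the sliding argument, reading off the ``$u_x$-coefficient'' of a morphism in reduced form, or, more robustly, imitate the five-lemma argument behind the full faithfulness of $\rr$ on $\OO^{-}$: the mesh relations yield a short exact sequence of graded $\k Q$-modules expressing $\OO^{-}(-,\nu(x))$ via the $\OO^{-}(-,w)$ over the degree-$0$ irreducibles $w\to\nu(x)$ and $\OO^{-}(-,\tau\nu(x))$, and comparing it via $u_x\circ(-)$ with the sequence attached to $x$ --- all terms finite-dimensional since $Q$ is Dynkin --- forces an isomorphism degree by degree. The step I expect to be the genuine obstacle is carrying out this comparison cleanly while keeping track of weights (note $u_x$ also shifts weight by $N(i)$) so as to land on a \emph{bigraded} isomorphism; the twist-relation manipulation for surjectivity, though it is the main idea, should be routine once it is set up carefully.
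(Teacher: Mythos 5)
Your approach matches the paper's proof in its essentials: both reduce the isomorphism to the observation that a positive-degree morphism $z\to\nu(x)$ can, via the twist relations $\omega_f\colon u_{y}\circ f=\nu(f)\circ u_{x}$, be pushed so that $u_x$ is the outermost factor, and this factorization is unique. The paper phrases this as a single ``uniquely factors through $u_x$'' claim for the degree-$0$-to-degree-$1$ restriction, then finishes by induction on degree; your splitting of the same manipulation into a surjectivity half (slide $u$ through) and an injectivity half (invert the slide / read off the $u_x$-coefficient) is the same argument differently packaged, and your explicit justification that $\OO^{-}_0\bigl((j,0),\nu(x)\bigr)=0$ via Happel's theorem supplies a step the paper leaves implicit.

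One caveat on your ``more robust'' alternative for injectivity: the five-lemma comparison you sketch would need an analogue of Lemma~\ref{lem:ShortExactSequence} for $\OO^{-}$, but that short exact sequence is built from the degree-$1$ arrows $t_x$ of $\GG$, which are subject to $dt_x=\rho_x$; in $\OO$ the degree-$1$ arrows $u_x$ instead satisfy the twist relations $\omega_f$, and the mapping-cone mechanism behind that lemma does not transfer. The direct uniqueness argument you give first (and which the paper uses) is the one that actually closes the gap.
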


\begin{proof}
Composition with $u_x$ induces a natural transformation $$\tilder{\cat C}_\Omega(-,x)[1]\to\tilder{\cat C}_\Omega(-,\nu(x))$$ which when restricted to $\tilder{\cat C}_\Omega^{-}$ sends the subfunctor $\tilder{\cat J}(-,x)[1]$ surjectively onto $\tilder{\cat J}(-,\nu(x))$. Hence it induces a natural transformation of the Lemma.

For any $z$ in $\OO^-$ the restriction $\OO_0^{-}(z,x)\to\OO_1^{-}(z,\nu(x))$ is an isomorphism: any degree $1$ morphism $z\to\nu(x)$ factors through a single $u_y:y\to \nu(y)$ modulo $\tilder{\cat I}$ and so uniquely factors through $u_x:x\to\nu(x)$ modulo $\tilder{\cat J}$. The proof is completed by induction on degree.
\end{proof}

The mesh category $\HH$ is manifestly a $\nu$-invariant subcategory of $\OO$ whose morphisms are the degree 0 morphisms of $\OO$. Denote by $\HH^{-}$ the corresponding subcategory of $\OO^{-}$.

\begin{lemma}\label{lem:RepresentationVsHappel}
The restriction of $\rr$ to $\HH^{-}$ is naturally equivalent to Happel's functor $h$. 
\end{lemma}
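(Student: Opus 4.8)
The plan is to compare the two functors on objects and on morphisms. First, recall that $\HH^-$ is the full subcategory of $\OO^-$ with objects $x=(i,n)$, $n\le 0$, whose morphisms are exactly the degree $0$ morphisms of $\OO^-$; the inclusion $Q\times\Z\hookrightarrow\tilder\Omega$ exhibits it as the mesh category of $Q$ restricted to the appropriate slice. On objects, $\rr(x)=\OO(-,x)\circ\iota$ evaluated at a vertex $j$ is $\OO((j,0),x)$, and since all generating morphisms in $\tilder\Omega$ built out of $Q\times\Z$-arrows have degree $0$ (the $u$-arrows contribute degree $1$), the degree-$0$ part of $\OO((j,0),x)$ agrees with $\HH((j,0),x)$, which by Happel's Theorem (\emph{cf.} Theorem~\ref{thm:Happel}, via $h(x)=\tau^{\ell(x)}P_{q(x)}$) computes $\Db(Q)$-morphisms into $h(x)$. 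More precisely, by Proposition~\ref{prop:ProjectiveResolution} the complex $R(x)$ built from $\GG$ is a projective resolution of $\tau^nP_i = h(x)$, and the restriction of $\rr$ to $\HH^-$ factors through the same combinatorial data: the degree-$0$ part of $\OO(-,x)$ coincides with $\HH(-,x)$, which is $R(x)$ in homological degree $0$. Hence $\rr(x)$, viewed in $\Db(Q)^\Z$, is isomorphic to $h(x)$.

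Next I would promote this object-level identification to a natural equivalence of functors. The key is that on the subcategory $\HH^-$ (degree-$0$ morphisms only), $\rr$ is built by exactly the same mesh-relator recipe as Happel's functor: $\rr$ sends an irreducible morphism $f:x\to y$ to the induced map $\OO((-,0),x)\to\OO((-,0),y)$, which under the identification with $\HH(-,x)\cong h(x)$ is precisely the irreducible map $h(x)\to h(y)$ appearing in the almost-split triangles of Happel's Theorem. The mesh relations $\rho_x$ in $\tilder{\cat J}$ are sent by $\rr$ to the defining relations among irreducible maps in $\ind\Db(Q)$, so $\rr|_{\HH^-}$ is well-defined and agrees with $h$ on generators; since both functors agree on objects up to the chosen isomorphisms $\rr(x)\cong h(x)$, naturality of this family of isomorphisms follows by checking commutativity on the irreducible morphisms, which holds by the previous sentence. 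One then invokes the fully faithfulness of $h$ (Happel) to conclude that the natural transformation $\rr|_{\HH^-}\Rightarrow h$ is an equivalence on morphism spaces, hence a natural equivalence.

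The main obstacle I anticipate is bookkeeping the isomorphisms $\rr(x)\cong h(x)$ coherently across all $x$ so that naturality is genuinely automatic rather than requiring sign chases — in particular, making sure the identification of $\OO((j,0),x)_0$ with $\HH((j,0),x)$ is compatible with composition and with the $\Z$-action (via $\tau$) on both sides. This is essentially the content already extracted in Proposition~\ref{prop:ProjectiveResolution} and the short exact sequence of Lemma~\ref{lem:ShortExactSequence}, so the argument should be: fix the canonical identification $\rr(x)_0 = \HH(-,x)$ coming directly from the definitions (no choices needed, since $\HH^-$ sits inside $\OO^-$ as the degree-$0$ morphisms), observe that Happel's $h$ is \emph{defined} up to isomorphism precisely by the mesh category, and transport along the (essentially unique) isomorphism $\HH \cong \operatorname{image}(h)$. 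With that setup, the naturality square for an irreducible $f$ commutes tautologically, and additivity plus the fact that $\HH^-$ is generated by irreducibles modulo mesh relations finishes the proof.
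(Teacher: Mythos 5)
Your identification of the degree-$0$ part of $\OO(-,x)$ with $\HH(-,x)$ is correct, but it does not by itself establish the isomorphism $\rr(x)\cong h(x)$ for general $x$ in $\HH^-$, which is the crux of the lemma. The graded module $\rr(x)$ has components in positive degree, coming from paths through the $u$-arrows, and for $x$ far in the past (i.e.\ $\ell(x)$ very negative) the degree-$0$ piece $\HH(-,x)$ is zero while $\rr(x)$ is not: $h(x)=\tau^{\ell(x)}P_{q(x)}$ is a shifted module and $\rr(x)$ is likewise concentrated in positive degree. So the object-level comparison you set up only says something for $x$ in the window $0\ge \ell(x) > -N(\nu(q(x)))$, where in fact the equality $\OO(-,x)=\HH(-,x)$ holds in \emph{all} degrees because the $u$-arrows land outside that window. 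That observation --- a fundamental domain for $\nu$ --- is what the paper uses as the base case; it then extends to all of $\HH^-$ by induction, using the preceding lemma (post-composition with $u_x$ gives $\rr(\nu(x))\cong\rr(x)[1]$) against the matching identity $h(\nu(x))=h(x)[1]$. Your proposal has no analogue of this inductive step, and without it the natural isomorphism $\rr|_{\HH^-}\Rightarrow h$ is only defined on the base window.

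A second, smaller issue: you invoke Proposition~\ref{prop:ProjectiveResolution} and Lemma~\ref{lem:ShortExactSequence}, but those concern the dg cover $\GG=\cat C_{\hater Q\times\Z}$ and the functor $R$ from Section~\ref{sec:Ginzburg} (where the extra arrows are the degree-$2$ loops $t_i$ and there is a differential). The present section works with a different cover $\OO=\tilder{\cat C}_\Omega/\tilder{\cat J}$ and a different functor $\rr$ (no differential; extra arrows are the degree-$1$ $u$'s, with relations $\omega_f$). The combinatorics are parallel but the categories are not the same, so citing the $\GG$-results here is a category error; you would need the $\OO$-analogues, which is essentially what the preceding lemma in this section provides.

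Finally, the claim that ``$\rr$ sends an irreducible morphism to precisely the irreducible map $h(x)\to h(y)$'' is the content to be proven rather than an observation to be invoked; appealing to fully faithfulness of $h$ does not close this gap, since one first needs the object identifications $\rr(x)\cong h(x)$ to be coherently chosen across all $x$, which again requires the fundamental-domain-plus-induction argument.
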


\begin{proof}
The full subcategory of $\HH^{-}$ whose objects are the $x=(i,n)$ with $0\geq n>-N(\nu(i))$ is equivalent to $\mod\text{-}\k Q$ where the restriction of $h$ is equivalent to $\HH(-,x)\circ\iota$ for $\iota:\cat C_Q\to\HH$ the embedding induced by $Q\hookrightarrow Q\times\Z$. For such $x$ 
$$\HH(-,x)=\OO(-,x)$$
since the morphisms of the form $u_x$ map to objects $y=(j,m)$ with $m\leq -N(\nu(j))$.   

Given arbitrary $x=(n,i)$ in $\HH^{-}$ one has $\nu(x)=(\nu(i), n-N(i))$, and so 
$$h(\nu(x))=\tau^{n-N(i)}P_{\nu(i)}=\tau^nP_i[1]=h(x)[1].$$ 
Thus the Proposition holds by the previous Lemma and induction.
\end{proof}

The functors $\tau$ and $\rr$ commute up to isomorphism, and hence $\rr$ induces a bigraded $\k$-algebra homomorphism 
\begin{equation}\label{eqn:TwistedSection}
\k[\OO^S]\to\bigoplus_{\substack{x,y\in S \\ n\geq 0}}\Db(Q)^\Z(\rr(x),\tau^{-n}\rr(y))=U_Q.
\end{equation}

\begin{proposition}
The functor $\rr:\OO^{-}\to\Db(Q)^\Z$ is fully faithful, and so \eqref{eqn:TwistedSection} is an isomorphism.
\end{proposition}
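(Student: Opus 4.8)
The plan is to run the same kind of induction as in the last proposition of Section~\ref{sec:Ginzburg}, except that the two lemmas just established do most of the work: Lemma~\ref{lem:RepresentationVsHappel} identifies $\rr$ on degree-$0$ morphisms with Happel's functor, and the $u$-equivariance lemma lets one trade a unit of degree on the source for a shift on the target. Concretely, for objects $x,y$ of $\OO^-$ I would show that $\rr_{xy}$ is an isomorphism in each degree $m$, by induction on $m$. For $m=0$ one has $\OO^-(x,y)_0=\HH^-(x,y)$ while the degree-$0$ part of $\Db(Q)^\Z(\rr x,\rr y)$ is $\Db(Q)(\rr x,\rr y)$, so the claim is precisely that the restriction of $\rr$ to $\HH^-$ is fully faithful; by Lemma~\ref{lem:RepresentationVsHappel} this restriction is naturally equivalent to Happel's functor $h$, which is fully faithful by Happel's Theorem.

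For the inductive step $m\ge 1$, one observes that every morphism of positive degree in $\OO^-$ can be written, using the relators $\omega_f$ to slide all $u$'s toward the source, as a degree-$0$ morphism post-composed with a power of a $u$-arrow; hence $\OO^-(x,y)_m=0$ unless $y=\nu(y')$ with $y'\in\OO^-$ (necessarily $y'=\nu^{-1}y$, since morphisms never increase the second coordinate and $\nu$ strictly decreases it). When $y=\nu(y')$, post-composition with $u_{y'}$ gives an isomorphism $\OO^-(x,y')_{m-1}\xrightarrow{\sim}\OO^-(x,y)_m$ by the $u$-equivariance lemma, and on the target side the same lemma yields $\rr(\nu y')\cong\rr(y')[1]$, so that shifting produces an isomorphism between the relevant graded components of $\Db(Q)^\Z(\rr x,\rr y)$ and $\Db(Q)^\Z(\rr x,\rr y')$. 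Since $\rr$ carries the source isomorphism (composition with $u_{y'}$) to the target isomorphism (composition with the canonical shift morphism $s_{\rr y'}$), the degree-$m$ component of $\rr_{xy}$ is an isomorphism exactly when the degree-$(m-1)$ component of $\rr_{xy'}$ is, which holds by induction. In the remaining case $y\notin\nu(\OO^-)$ one checks that the target vanishes as well: an appropriate shift of $\rr y$ equals $h(\nu^{-m}y)$, an object of strictly positive second coordinate, and the mesh category admits no morphisms from an object of $\OO^-$ to such an object. This establishes that $\rr\colon\OO^-\to\Db(Q)^\Z$ is fully faithful. Finally, the morphism spaces occurring in $\k[\OO^S]$ are morphism spaces of $\OO^-$, and \eqref{eqn:TwistedSection} is the algebra map induced by the restriction of $\rr$ to the section $S$ composed with the already-constructed isomorphism $\k[\OO^S]\xrightarrow{\sim}\Lambda_Q^\nu[u]$; hence \eqref{eqn:TwistedSection} is an isomorphism.

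I expect the main obstacle to be the compatibility claim used in the inductive step: that $\rr$ genuinely intertwines post-composition with $u_{y'}$ on the source with post-composition with the shift morphism $s_{\rr y'}$ on the target. Unwinding this requires remembering that the equivalence $\rr|_{\HH^-}\simeq h$ of Lemma~\ref{lem:RepresentationVsHappel} is only a natural (not strict) equivalence, tracking the identification $\rr(\nu x)\cong\rr(x)[1]$ through it, and keeping the signs straight — the shift $[1]$ acts on morphism complexes, and the degree grading on $\OO$ is opposite to the shift grading on $\Db(Q)^\Z$, exactly as in Proposition~\ref{prop:TotalDegree}. A more self-contained alternative, closer in spirit to Section~\ref{sec:Ginzburg}, is to establish directly an analogue of Lemma~\ref{lem:ShortExactSequence}: for each $y$ in $\OO^-$ a short exact sequence of graded functors relating $\OO^-(-,y)$, $\bigoplus_{f\colon z\to y}\OO^-(-,z)$ and a shift of $\OO^-(-,\nu^{\pm 1}y)$, built jointly from the mesh relators $\rho_y$ and the twisting relators $\omega_f$, then pass to the long exact sequences and conclude with the Five Lemma, inducting on $m$ and on the minimal length of a degree-$0$ morphism $x\to y$.
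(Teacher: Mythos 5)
Your proof is correct and is built from the same key ingredients the paper uses, namely Happel's Theorem via Lemma~\ref{lem:RepresentationVsHappel} and the $u$-equivariance lemma preceding it (together with the equality $\rr(u_x)=s_{\rr(x)}$). But you organize the argument as an induction on degree with a case distinction, whereas the paper gives a short direct argument that dispenses with both. For fullness, the paper just observes that $\Db(Q)^\Z$ is generated by its degree-$0$ morphisms (hit by $\rr$ via Happel) and by the $s_X$'s (hit by $\rr$ via the $u_x$'s). For faithfulness, the paper uniquely decomposes a homogeneous $f:\rr(x)\to\rr(y)$ as $f=g\circ s_X^n$ with $g$ of degree $0$ and recovers the preimage as $g'\circ u_x^n$, where $g'$ is the unique Happel preimage of $g$. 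Crucially, the paper slides the $u$'s \emph{toward the source}, so the intermediate object $\nu^n(x)$ always lies in $\OO^-$ (since $\nu$ preserves $\OO^-$) and no boundary case arises. You slide the $u$'s toward the \emph{target} (despite the slip where you say ``toward the source'' but then write ``post-composed''), which forces you through $\nu^{-1}(y)$; this \emph{can} fall outside $\OO^-$ and is exactly what produces your extra vanishing case. Your handling of that case is sound — the vanishing of the source side follows because the last $u$-factor would have to end at an object of positive $\ell$-coordinate, and the vanishing of the target follows from the mesh category having no morphisms from an object of nonpositive $\ell$ to one of positive $\ell$, once one keeps the degree conventions (which you correctly flag as opposite) straight — but the paper's choice of direction simply makes the case analysis unnecessary. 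Your closing suggestion of an analogue of Lemma~\ref{lem:ShortExactSequence} plus the Five Lemma is also viable and would mirror the proof of Theorem~\ref{thm:GinzburgDerivedTranslationAlgebra} more closely; the paper opted not to do that here because the direct decomposition argument is shorter.
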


\begin{proof}
Recall that the morphisms in $\Db(Q)^\Z$ are generated by the morphisms in $\Db(Q)$ together with the degree $1$ suspension morphisms $s_X:X\to X[1]$. By Proposition \ref{lem:RepresentationVsHappel}, the induced map $\rr_{xy}$ maps onto $\Db(Q)$. Moreover, $\rr(u_x)=s_{\rr(x)}$ and so $\rr$ is full. 

Suppose now that $f:\rr(x)\to\rr(y)$ is a homogeneous morphism in $\Db(Q)^\Z$. It can be uniquely written as $f=g\circ s^n_X$ for some degree 0 morphism $g:X[n]\to Y$. Thus, there is a unique $g'$ mapping to $g$ under Happel's functor $h$, which is equivalent to $E$ on degree 0 morphisms. Thus $g'\circ u_x^n$ is the unique morphism of $\OO^-$ mapping to $f$ under $\rr_{xy}$.
\end{proof}

\begin{example}
We illustrate how the results of this paper can be used to compute the higher multiplications of $\Lambda_Q^\nu[u]$ in the example where $Q$ is the quiver $\xymatrix{3\ar[r]^{\beta} & 2\ar[r]^{\alpha} & 1}$. Let us show that $\mu_3(\beta\alpha,\alpha^*,\alpha)=e_3ue_1.$ Similar calculations show that $\mu_3(\alpha,\alpha^*\beta^*,\beta)=e_2ue_2$ and $\mu_3(\alpha^*,\alpha,\alpha^*\beta^*)=e_1ue_3$.

The category $\Db(Q)$ has irreducible morphisms 
$$f:P_1\to P_2 \qquad g:P_2\to P_3 \qquad f^*:P_2\to S_2 \qquad g^*:P_3\to I_2$$
and all other irreducible morphisms are $\tau^-$ translates of these. Denote by $\phi:\Lambda_Q\xrightarrow{\sim} U_Q$ the isomorphism of Theorem \ref{thm:GinzburgDerivedTranslationAlgebra}. It sends $\alpha$, $\beta$, $\alpha^*$ and $\beta^*$ to $f$, $g$, $f^*$ and $g^*$ respectively.

By Theorem \ref{thm:TwistedDerivedTranslationAlgebra} we have 
$$\phi(\mu_3(\beta\alpha,\alpha^*,\alpha))=\mu_3(\phi(\beta\alpha),\phi(\alpha^*),\phi(\alpha))=\mu_3(g\circ f,f^*,f)$$
which we claim equals the morphism $s_1:P_1\to P_1[1]=\tau^-P_3$. Indeed, in terms of $\Db(Q)$ the right hand side is $\mu_3$ applied to the non-split triangle 
$$\xymatrix@R=10pt{
P_1\ar[r]^{f} & P_2\ar[r]^{f^*} & \tau^-P_1\ar[r]^{\tau^-(g\circ f)} & \tau^-P_3}$$ and hence equals $s_1$ by Theorem \ref{thm:DerivedCategoryAinfty}. Since $s_1:P_1\to\tau^-P_3$ it follows that $\phi^{-1}(s_1)=e_3ue_1$ and so $\mu_3(\beta\alpha,\alpha^*,\alpha)=e_3ue_1$.
\end{example}

\section*{Acknowledgements} 
The research presented here was completed as part of the author's Ph.D. thesis at Brandeis University under the supervision of Kiyoshi Igusa, and was partially supported by NSA grant \#H98230-13-1-0247. The author would like to express his gratitude to Kiyoshi for so generously sharing his ideas and insights, as well as his patient guidance over the years. He additionally thanks Claire Amiot, Jonah Ostroff, Hugh Thomas, and Gordana Todorov for helpful conversations and correspondences.


\bibliographystyle{amsplain}
\bibliography{thesisBib}

\providecommand{\bysame}{\leavevmode\hbox to3em{\hrulefill}\thinspace}
\providecommand{\MR}{\relax\ifhmode\unskip\space\fi MR }
\providecommand{\MRhref}[2]{%
  \href{http://www.ams.org/mathscinet-getitem?mr=#1}{#2}
}
\providecommand{\href}[2]{#2}
\begin{thebibliography}{10}

\bibitem{Amiot}
Claire Amiot, \emph{Cluster categories for algebras of global dimension 2 and
  quivers with potential}, Ann. Inst. Fourier (Grenoble) \textbf{59} (2009),
  no.~6, 2525--2590. \MR{2640929 (2011c:16026)}

\bibitem{BautistaLiu}
Raymundo Bautista and Shiping Liu, \emph{Covering theory for linear categories
  with application to derived categories}, J. Algebra \textbf{406} (2014),
  173--225. \MR{3188335}

\bibitem{Bondal}
Alexey~I Bondal, \emph{Non-commutative deformations and poisson brackets on
  projective spaces}, Max-Planck-Inst. f{\"u}r Mathematik, Bonn, 1993.

\bibitem{BongartzGabriel}
K.~Bongartz and P.~Gabriel, \emph{Covering spaces in representation-theory},
  Invent. Math. \textbf{65} (1981/82), no.~3, 331--378. \MR{643558 (84i:16030)}

\bibitem{BMRRT}
Aslak~Bakke Buan, Robert Marsh, Markus Reineke, Idun Reiten, and Gordana
  Todorov, \emph{Tilting theory and cluster combinatorics}, Adv. Math.
  \textbf{204} (2006), no.~2, 572--618. \MR{2249625 (2007f:16033)}

\bibitem{DWZ}
Harm Derksen, Jerzy Weyman, and Andrei Zelevinsky, \emph{Quivers with
  potentials and their representations. {I}. {M}utations}, Selecta Math. (N.S.)
  \textbf{14} (2008), no.~1, 59--119. \MR{2480710 (2010b:16021)}

\bibitem{Fukaya}
Kenji Fukaya, \emph{Morse homotopy, {$A_\infty$}-category, and {F}loer
  homologies}, Proceedings of {GARC} {W}orkshop on {G}eometry and {T}opology
  '93 ({S}eoul, 1993), Lecture Notes Ser., vol.~18, Seoul Nat. Univ., Seoul,
  1993, pp.~1--102.

\bibitem{Ginzburg}
Victor Ginzburg, \emph{Calabi-yau algebras}, arXiv preprint math/0612139v3
  (2006), 1--79.

\bibitem{Happel2}
Dieter Happel, \emph{On the derived category of a finite-dimensional algebra},
  Comment. Math. Helv. \textbf{62} (1987), no.~3, 339--389. \MR{910167
  (89c:16029)}

\bibitem{Happel}
\bysame, \emph{Triangulated categories in the representation theory of
  finite-dimensional algebras}, London Mathematical Society Lecture Note
  Series, vol. 119, Cambridge University Press, Cambridge, 1988. \MR{935124
  (89e:16035)}

\bibitem{IgusaTodorov}
Kiyoshi Igusa and Gordana Todorov, \emph{A characterization of finite
  {A}uslander-{R}eiten quivers}, J. Algebra \textbf{89} (1984), no.~1,
  148--177. \MR{748232 (86f:16029b)}

\bibitem{Kadeishvili}
T.~V. Kadei{\v{s}}vili, \emph{On the theory of homology of fiber spaces},
  Uspekhi Mat. Nauk \textbf{35} (1980), no.~3(213), 183--188, International
  Topology Conference (Moscow State Univ., Moscow, 1979). \MR{580645
  (82a:57041a)}

\bibitem{KellerDG}
Bernhard Keller, \emph{Deriving {DG} categories}, Ann. Sci. \'Ecole Norm. Sup.
  (4) \textbf{27} (1994), no.~1, 63--102. \MR{1258406 (95e:18010)}

\bibitem{Keller1}
\bysame, \emph{Introduction to {$A$}-infinity algebras and modules}, Homology
  Homotopy Appl. \textbf{3} (2001), no.~1, 1--35. \MR{1854636 (2004a:18008a)}

\bibitem{Keller3}
\bysame, \emph{Calabi-{Y}au triangulated categories}, Trends in representation
  theory of algebras and related topics, EMS Ser. Congr. Rep., Eur. Math. Soc.,
  Z\"urich, 2008, pp.~467--489. \MR{2484733 (2010b:18018)}

\bibitem{Keller5}
\bysame, \emph{Deformed {C}alabi-{Y}au completions}, J. Reine Angew. Math.
  \textbf{654} (2011), 125--180, With an appendix by Michel Van den Bergh.
  \MR{2795754 (2012j:18018)}

\bibitem{KontsevichHMS}
Maxim Kontsevich, \emph{Homological algebra of mirror symmetry}, Proceedings of
  the {I}nternational {C}ongress of {M}athematicians, {V}ol.\ 1, 2 ({Z}\"urich,
  1994), Birkh\"auser, Basel, 1995, pp.~120--139. \MR{1403918 (97f:32040)}

\bibitem{KontsevichSoibelman}
Maxim Kontsevich and Yan Soibelman, \emph{Motivic {D}onaldson-{T}homas
  invariants: summary of results}, Mirror symmetry and tropical geometry,
  Contemp. Math., vol. 527, Amer. Math. Soc., Providence, RI, 2010, pp.~55--89.
  \MR{2681792 (2011k:14066)}

\bibitem{LodayVallette}
Jean-Louis Loday and Bruno Vallette, \emph{Algebraic operads}, Grundlehren der
  Mathematischen Wissenschaften [Fundamental Principles of Mathematical
  Sciences], vol. 346, Springer, Heidelberg, 2012. \MR{2954392}

\bibitem{Markl}
Martin Markl, \emph{Transferring {$A\sb \infty$} (strongly homotopy
  associative) structures}, Rend. Circ. Mat. Palermo (2) Suppl. (2006), no.~79,
  139--151. \MR{2287133 (2007j:18016)}

\bibitem{Seidel}
Paul Seidel, \emph{Fukaya categories and {P}icard-{L}efschetz theory}, Zurich
  Lectures in Advanced Mathematics, European Mathematical Society (EMS),
  Z\"urich, 2008. \MR{2441780 (2009f:53143)}

\bibitem{VanDenBergh}
Michel Van~den Bergh, \emph{Calabi-yau algebras and superpotentials}, arXiv
  preprint arXiv:1008.0599 (2010), 1--41.

\end{thebibliography}

\end{document}